\documentclass[11pt]{amsart}

\usepackage[T1]{fontenc}
\usepackage[utf8]{inputenc}

\usepackage{amsmath,amssymb,amsthm,mathtools}
\usepackage{mathrsfs}
\usepackage{tikz-cd}
\usepackage{enumitem}

\usepackage[colorlinks=true,
  linkcolor=blue,
  citecolor=blue,
  urlcolor=blue]{hyperref}

\newtheorem{theorem}{Theorem}[section]
\newtheorem{proposition}[theorem]{Proposition}
\newtheorem{lemma}[theorem]{Lemma}
\newtheorem{corollary}[theorem]{Corollary}
\newtheorem{fact}[theorem]{Fact}

\theoremstyle{definition}
\newtheorem{definition}[theorem]{Definition}
\newtheorem{example}[theorem]{Example}

\theoremstyle{remark}
\newtheorem{remark}[theorem]{Remark}

\newcommand{\cl}{\operatorname{cl}}

\newcommand{\st}{\operatorname{st}}
\newcommand{\ord}{\operatorname{ord}}

\newcommand{\RCF}{\operatorname{RCF}}
\newcommand{\CODF}{\operatorname{CODF}}
\newcommand{\Jet}{\operatorname{Jet}}

\makeatletter
\def\subsection{\@startsection{subsection}{2}%
  \z@{.7\linespacing\@plus.3\linespacing}{.4\linespacing}%
  {\normalfont\bfseries}}
\makeatother

\title{\texorpdfstring{$\delta$}{Delta}-Cell Decomposition and Curve Selection}

\author{Xiaoduo Wang}

\address{Department of Mathematics, University of Manchester}
\email{xiaoduo.wang@manchester.ac.uk}

\date{\today}

\begin{document}

\begin{abstract}
We develop a cell decomposition framework for o-minimal structures equipped with a generic derivation. To a $\delta$-cell we associate source cells and finite configurations in ordinary o-minimal sorts, allowing differential-topological questions to be studied through finite jet spaces. We then introduce a metric space of definable curve germs and identify its local half-space pieces with Cartesian powers of the maximal ideal of the Hardy field of definable germs. Using this germ-space description, we prove an abstract curve selection theorem for the $\delta$-topology. In the case of closed ordered differential fields, we further describe concrete asymptotic representatives for the abstract curve germs.
\end{abstract}

\maketitle

\section{Introduction}

O-minimal structures provide a setting in which definable sets admit a geometric theory close to semialgebraic and subanalytic geometry. Basic tameness properties such as cell decomposition, dimension theory, monotonicity, and curve selection make it possible to study definable sets by geometric methods; see, for instance, \cite{PillaySteinhorn1986,vandenDries2003}. A natural question is what remains of this geometry after adding a derivation.

The first and most classical model-theoretic example is the theory of closed ordered differential fields, introduced by Singer in \cite{Singer1978}. This theory may be viewed as a differential analogue of real closed fields. Later work studied its definable sets and developed versions of cell decomposition and dimension theory; see, for example, \cite{BrihayeMichauxRivière2009,Point2011}. Related developments include the treatment of several commuting derivations in \cite{Rivière2006}, and the study of tame pairs of closed ordered differential fields in \cite{Borrata2021}.

A more general framework was introduced by Fornasiero and Kaplan in \cite{FornasieroKaplan2020}. Let \(T\) be an o-minimal theory expanding the theory of ordered fields. A \(T\)-derivation is a derivation which satisfies the chain rule with respect to all \(\mathcal L(\varnothing)\)-definable \(\mathcal C^1\)-functions. Under suitable hypotheses on \(T\), the theory of models of \(T\) equipped with a \(T\)-derivation admits a model companion, denoted \(T_g^\delta\). In models of \(T_g^\delta\), the derivation behaves generically. This construction recovers \(\CODF\) when \(T\) is the theory of real closed fields, and extends the differential setting to arbitrary o-minimal theories. Further work in this direction includes \cite{FornasieroTerzo2024,CubidesPoint2023}.

The presence of a derivation changes the topology one should consider. In ordinary o-minimal geometry, definable sets are studied with respect to the Euclidean topology. In the differential setting, one also has the \(\delta\)-topology. In the case of closed ordered differential fields, this topology was introduced by Brihaye, Michaux and Rivière in \cite{BrihayeMichauxRivière2009}. It can be described as the topology generated by the finite jet maps, or equivalently as the coarsest topology refining the original topology for which the derivation is continuous. The same construction applies in models of \(T_g^\delta\). Although this topology is natural, it is not itself definable in the differential language. Consequently, even basic questions about closure and curve selection become more subtle than in the o-minimal case.

The difficulty is that a definable condition of finite differential order gives rise to infinitely many finite jet conditions. At each finite level one obtains an ordinary \(\mathcal L\)-definable set in an o-minimal structure, but closure in the \(\delta\)-topology depends on all finite levels simultaneously. Thus, instead of working only with the original differential set, one is led to study the sequence of its finite configurations in jet spaces.

The first purpose of this paper is to develop a systematic cell-theoretic language for this situation. Starting from the \(\delta\)-cell decomposition of \cite{BrihayeMichauxRivière2009} and its generalizations in \cite{FornasieroKaplan2020,CubidesPoint2023}, we introduce source cells and configurations associated to a \(\delta\)-cell. A source cell is an ordinary o-minimal cell in a finite jet space which records the finite differential data defining the \(\delta\)-cell. Higher configurations are obtained by repeatedly applying the chain rule. In this way, a differential cell is replaced by a compatible system of ordinary o-minimal cells.

This construction is useful because it separates two kinds of information. The differential structure is encoded by the way configurations are lifted from one finite level to the next, while the geometry at each finite level is ordinary o-minimal geometry. We prove a refined \(\delta\)-decomposition theorem, Theorem~\ref{Refined delta decomposition theorem}, in which source cells may be chosen smooth and with dense realized jet space. We then construct canonical source cells by passing to minimal prepared source layouts. The main point of this construction is that configurations become independent of the choice of defining formula; see Theorem~\ref{Uniqueness of minimal source configurations}. We also prove that source configurations behave well under projections and fibers, Theorem~\ref{Projection and fibers of a delta cell is a delta cell}.

The second purpose of the paper is to formulate curve selection at the correct level. In the o-minimal setting, curve selection says that a point in the closure of a definable set can be approached by a definable curve contained in the set. In the \(\delta\)-topology, a naive version of this statement is not the right object: a curve should not only approach the point at the original level, but should also be compatible with all higher finite configurations. We therefore define \(\delta\)-curves as compatible sequences of ordinary definable curves, one in each finite configuration. The finite-configuration characterization of \(\delta\)-closure, Lemma~\ref{Finite configurations characterize delta closure}, is the bridge between the topology and these sequences.

The main new ingredient is a metric structure on the space of ordinary curve germs. Fix a point \(a\). We consider definable curves converging to \(a\), and identify two such curves if their images agree in a sufficiently small neighbourhood of \(a\). The distance between two curve germs is measured by the rate at which their images approach one another in the Hausdorff metric. This rate is naturally an element of the Hardy field of definable germs. Hardy fields and their connection with o-minimality are discussed, for example, in \cite{MillerHardy,ADH2019}. After restricting to a half-space piece, the space of curve germs is identified with a Cartesian power of the maximal ideal of this Hardy field; see Proposition~\ref{Charts are Euclidean spaces over the Hardy field}. Thus the local topology of the germ space becomes an ordinary product topology over a Hardy field.

This germ-space construction is the point at which the curve selection problem becomes manageable. Instead of attempting to choose one concrete definable curve satisfying all infinitely many finite-level conditions at once, we work with an abstract germ. The abstract germ records a limiting direction in the Hardy-field coordinate space. Near this abstract germ, one can find ordinary definable curve germs whose induced canonical \(\delta\)-curves satisfy any prescribed finite number of convergence conditions. This gives the abstract curve selection theorem for the \(\delta\)-topology, Theorem~\ref{Abstract curve selection lemma}.

In the final section we specialize to closed ordered differential fields over the real field. In this case the abstract germs obtained above can be compared with explicit asymptotic representatives. We use Puiseux and Hahn-field methods, in the spirit of \cite{MarkerMessmerPillay1996,Kaplansky1942,Kaplansky1944}, to represent the relevant cuts by concrete real germs. The resulting statement, Theorem~\ref{Concrete realization of abstract CODF curve}, gives concrete representatives for the abstract curve germs under the stated real-germ interpretation hypothesis, and separates the algebraic Puiseux-type cases from the generalized power series case.

We now describe the organization of the paper. Section~2 recalls the necessary background and fixes notation. Section~3 introduces finite jet spaces and the \(\delta\)-topology. Section~4 defines \(\delta\)-cells and proves the refined \(\delta\)-decomposition theorem. Section~5 develops configurations and canonical source cells. Section~6 defines \(\delta\)-curves and canonical \(\delta\)-curves. Section~7 constructs the metric space of curve germs and identifies its local pieces with Hardy-field coordinate spaces. Section~8 proves the abstract curve selection theorem. Section~9 treats concrete representatives in \(\CODF\).

\section{Preliminaries}

We recall the background and notation used throughout the paper. Most of the material in this section is standard; it is included to fix conventions for o-minimality, generic derivations, Hardy fields, and convex pairs.

\subsection{O-minimality and weak o-minimality}

Let \(\mathcal M=(M,<,\ldots)\) be an expansion of a dense linear order without endpoints. We say that \(\mathcal M\) is \emph{o-minimal} if every definable subset of \(M\) is a finite union of points and open intervals. A complete theory extending the theory of ordered fields is called o-minimal if all of its models are o-minimal. Equivalently, by the o-minimality theorem, such a theory is o-minimal if it has an o-minimal model.

Throughout the paper, \(T\) is a complete o-minimal theory expanding the theory of real closed ordered fields in a language \(\mathcal L\). We assume that \(T\) has quantifier elimination and is universally axiomatizable. This is the usual convention in the study of \(T\)-convexity and \(T\)-derivations. Indeed, o-minimal expansions of real closed fields have definable Skolem functions, and one may replace \(\mathcal L\) by the language \(\mathcal L^{\mathrm{df}}\) obtained by adding a function symbol for every \(\mathcal L(\varnothing)\)-definable function. The corresponding definitional expansion \(T^{\mathrm{df}}\) has quantifier elimination and is universally axiomatizable. Passing to this definitional expansion does not change the definable sets. Thus, as in \cite{vandendriesLewenberg1995,vandenDries1997,FornasieroKaplan2020}, we work under these hypotheses from the beginning. Unless stated otherwise, definable means definable with parameters.

We use standard facts from o-minimality without further comment, including cell decomposition, monotonicity, definable choice, dimension theory, and the o-minimal curve selection lemma. These facts can be found in \cite{vandenDries2003}. When smoothness is required, we use \(\mathcal C^p\)-cell decomposition. In the sections on curve selection, we assume that \(T\) admits \(\mathcal C^\infty\)-cell decomposition.

We shall also use the corresponding Nash form of cell decomposition in the semialgebraic case. By a \emph{Nash function} we mean a semialgebraic \(\mathcal C^\infty\)-function, and a \emph{Nash cell} is a semialgebraic cell whose defining functions are Nash. Every finite family of semialgebraic sets admits a finite Nash cell decomposition compatible with the family; see \cite{BochnakCosteRoy1998}.

We next recall the small amount of weak o-minimality needed later. Let \(\mathcal N=(N,<,\ldots)\) be a linearly ordered structure. We say that \(\mathcal N\) is \emph{weakly o-minimal} if every definable subset of \(N\) is a finite union of convex subsets of \(N\). A theory is weakly o-minimal if all of its models are weakly o-minimal.

The weakly o-minimal structures used below arise from \(T\)-convex pairs. We recall the particular form needed for Hardy fields in Section~\ref{Subsection Hardy fields and convex pairs}.

\subsection{Generic derivations}

Let \(\mathcal L^\delta:=\mathcal L\cup\{\delta\}\), where \(\delta\) is a unary function symbol. We recall the basic facts on generic derivations from \cite{FornasieroKaplan2020}.

Let \((\mathcal M,\delta)\) be an \(\mathcal L^\delta\)-structure with \(\mathcal M\models T\). We say that \(\delta\) is a \emph{\(T\)-derivation} on \(\mathcal M\) if, for every \(\mathcal L(\varnothing)\)-definable \(\mathcal C^1\)-function \(f:U\to M\), with \(U\subseteq M^n\) open, one has \(\delta f(u)=\mathbf J_f(u)\delta u\) for all \(u\in U\). Here \(\mathbf J_f\) denotes the Jacobian matrix of \(f\), computed in the usual o-minimal differential calculus over the underlying real closed field. We denote by \(T^\delta\) the \(\mathcal L^\delta\)-theory extending \(T\) by the axioms asserting that \(\delta\) is a \(T\)-derivation.

For definable functions with parameters, the chain rule has the following form.

\begin{fact}\cite[Lemma~2.12]{FornasieroKaplan2020}\label{Chain rule for definable functions}
Suppose that \((\mathcal M,\delta)\models T^\delta\). Let \(k>0\), and let \(f\) be an \(\mathcal L(M)\)-definable \(\mathcal C^k\)-function on an open set \(U\subseteq M^n\). Then there exists a unique \(\mathcal L(M)\)-definable \(\mathcal C^{k-1}\)-function \(f^{[\delta]}:U\to M\) such that
\[
\delta f(u)=f^{[\delta]}(u)+\mathbf J_f(u)\delta u
\]
for all \(u\in U\). Moreover, if \(f\) is \(\mathcal L(A)\)-definable for some \(A\subseteq\ker(\delta)\), then \(f^{[\delta]}=0\).
\end{fact}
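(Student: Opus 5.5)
The plan is to reduce to the parameter-free chain rule from the definition of a \(T\)-derivation by exhibiting \(f\) as a fiber of a \(\varnothing\)-definable function. Write \(f(u)=F(u,b)\), where \(b\in M^m\) is a tuple of parameters and \(F\) is \(\mathcal L(\varnothing)\)-definable. We need \(F\) to be \(\mathcal C^k\) on an open set \(V\subseteq M^{n+m}\) with \(U\times\{b\}\subseteq V\); arranging this is the main step.

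To arrange it, first choose \(b\) so that every coordinate of \(b\) is generic, i.e.\ \(b\) is \(\operatorname{dcl}\)-independent over \(\varnothing\). This is done by replacing \(b\) with a \(\operatorname{dcl}\)-basis of the parameter set and absorbing the dependent coordinates into \(F\) via \(\varnothing\)-definable functions. Next consider the \(\varnothing\)-definable set
\[
W=\{c : F(\cdot,c)\text{ is defined and }\mathcal C^k\text{ on an open set }U_c\},
\]
with \(U_c\) chosen uniformly. For \(c\in W\), also consider the set of points \((u,c)\) at which \(F\) is \(\mathcal C^k\) jointly. By \(\mathcal C^k\)-cell decomposition, the jointly-\(\mathcal C^k\) locus has complement of lower dimension. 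The point is to show that, for generic \(b\), the fiber of this bad set over \(b\) meets no point of \(U\). This is the step I expect to be the main obstacle, because fiberwise smoothness does not imply joint smoothness at every point of the fiber. I would first try a pointwise argument: fix \(u\in U\) and pick \(u\) generic over \(b\) when proving the identity. Then \((u,b)\) is generic in \(M^{n+m}\), so it lies in an open cell on which \(F\) is \(\mathcal C^k\). For such \(u\), the axiom gives
\[
\delta f(u)=\partial_u F(u,b)\,\delta u+\partial_c F(u,b)\,\delta b,
\]
and we set \(f^{[\delta]}(u):=\partial_cF(u,b)\,\delta b\).

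To extend to all \(u\in U\), I would instead define \(f^{[\delta]}(u):=\delta f(u)-\mathbf J_f(u)\delta u\) directly. This function is \(\mathcal L^\delta\)-definable, and the task is to show it coincides with an \(\mathcal L(M)\)-definable \(\mathcal C^{k-1}\) function everywhere. On the generic locus it equals \(G(u)=\partial_cF(u,b)\,\delta b\), which is \(\mathcal L(M)\)-definable (with parameters \(b,\delta b\)). The plan is then to prove that \(g(u):=\delta f(u)-\mathbf J_f(u)\delta u-G(u)\) vanishes on all of \(U\). Handle a nongeneric point \(u_0\) by varying the parameter presentation: re-choose the decomposition \(f=F'(\cdot,b')\) with \(b'\) containing \(u_0\)'s parameters, so that \(u_0\) lies in the good locus for \(F'\). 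Since the \(c\)-derivative terms of both presentations give \(\mathcal L(M)\)-definable functions agreeing on a dense set, they agree wherever both are continuous. Gluing finitely many such presentations by o-minimal cell decomposition of \(U\) then yields a single \(\mathcal L(M)\)-definable \(f^{[\delta]}\) satisfying the identity everywhere. Its \(\mathcal C^{k-1}\)-regularity follows from its being, locally, a partial derivative of a \(\mathcal C^k\) function.

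Uniqueness is immediate, since \(f^{[\delta]}\) is forced by the identity. For the last claim, if \(A\subseteq\ker\delta\), take \(b\) from \(A\). Then \(\delta b=0\), so \(f^{[\delta]}=\partial_cF\cdot\delta b=0\).
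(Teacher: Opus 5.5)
There is a genuine gap, and it sits exactly at the step you flagged; your workaround does not close it. Your handling of the good locus, of uniqueness, and of the constants clause is fine.

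\textbf{Where the argument fails.} Let \(B\subseteq U\) be the \(\mathcal L(b)\)-definable set of \(u\) such that \(F\) is not \(\mathcal C^k\) on any neighbourhood of \((u,b)\). Your genericity argument shows \(\dim B<n\), and on \(U\setminus B\) it gives \(\delta f(u)=G(u)+\mathbf J_f(u)\delta u\). The statement, however, is pointwise on all of \(U\). Since \(u\mapsto\delta u\) is in general neither continuous nor \(\mathcal L\)-definable, nothing about points of \(B\) follows by density.

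Re-presenting \(f=F'(\cdot,b')\) with the parameters of \(u_0\) inside \(b'\) does not put \(u_0\) in the good locus. It makes \(u_0\in\operatorname{dcl}(b')\), so \((u_0,b')\) is as far from generic as possible. Genericity over \(\varnothing\), your only tool, then says nothing about joint smoothness of \(F'\) there. Whether \emph{some} presentation is jointly \(\mathcal C^k\) at \(u_0\) is the original obstacle restated.

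The gluing step is also unjustified. \(B\) may be infinite, your presentations depend on \(u_0\), and nothing reduces them to finitely many fixed parameter tuples. If instead the extra parameters vary definably with \(u\), you introduce \(\delta\) of those parameters, which is not an \(\mathcal L(M)\)-definable function of \(u\). Writing \(u_0\) over a \(\operatorname{dcl}\)-basis over \(b\) does give \(\mathcal L(M)\)-definable formulas for \(\delta f(u_0)-\mathbf J_f(u_0)\delta u_0\) stratum by stratum on \(B\). You would still have to show these patch with \(G\) to a \(\mathcal C^{k-1}\) function, which is the same problem.

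\textbf{What is missing.} You need an o-minimal uniformity lemma: if \(b\) is \(\operatorname{dcl}\)-independent over \(\varnothing\), \(U=\{u:(u,b)\in\operatorname{dom}F\}\) is open, and \(F(\cdot,b)\) is \(\mathcal C^k\) on \(U\), then \(F\) is \(\mathcal C^k\) on an open neighbourhood of \(U\times\{b\}\). Equivalently, consider the \(\varnothing\)-definable set of parameters \(c\) for which \(F(\cdot,c)\) is \(\mathcal C^k\) on its open domain but \(F\) fails to be jointly \(\mathcal C^k\) somewhere along the fibre; this set must have dimension \(<m\). Proving this takes a genuine argument. One route: show that, off a small set of parameters, the partial derivatives of \(F\) (including \(\partial_cF\)) converge uniformly as one approaches the bad locus, then induct on the dimension of that locus.

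With this lemma, \(B=\varnothing\). The axiom then applies to \(F\) restricted to its \(\varnothing\)-definable open \(\mathcal C^k\)-locus at every point \((u,b)\). So \(f^{[\delta]}(u):=\partial_cF(u,b)\,\delta b\) is \(\mathcal L(b,\delta b)\)-definable and \(\mathcal C^{k-1}\) on all of \(U\), with no gluing needed. For comparison, the paper gives no proof of this fact and simply cites Fornasiero--Kaplan.

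\textbf{A minor point.} State your genericity steps as dimension statements, since \(M\) need not contain any point generic over \(b\) (for instance \(M\) the real algebraic numbers with \(T=\RCF\)). The dense set on which you compare functions should be \(U\setminus B\), not the set of generic points.
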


We now recall the genericity condition. For \(a\in M\) and \(n\in\mathbb N\), write
\[
\Jet_n(a):=(a,\delta a,\ldots,\delta^n a).
\]
If \(a=(a_1,\ldots,a_m)\in M^m\) and \(n\in\mathbb N\), write
\[
\Jet_{(n,\ldots,n)}(a):=(\Jet_n(a_1),\ldots,\Jet_n(a_m)).
\]

A \(T\)-derivation \(\delta\) is called \emph{generic} if, for every \(n\in\mathbb N\) and every \(\mathcal L(M)\)-definable set \(A\subseteq M^{n+1}\), if \(\dim(\Pi_n(A))=n\), then there is \(a\in M\) such that \(\Jet_n(a)\in A\). We denote by \(T_g^\delta\) the \(\mathcal L^\delta\)-theory extending \(T^\delta\) by this genericity scheme.

\begin{fact}\cite[Theorem~4.8]{FornasieroKaplan2020}\label{Model companion of T derivations}
The theory \(T_g^\delta\) is the model completion of \(T^\delta\). Since \(T\) has quantifier elimination and is universally axiomatizable, \(T_g^\delta\) has quantifier elimination.
\end{fact}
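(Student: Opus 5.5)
The plan is to prove Fact~\ref{Model companion of T derivations} with the usual Robinson-style criteria. First, every model of \(T^\delta\) embeds in a model of \(T_g^\delta\). Second, \(T_g^\delta\) has quantifier elimination; since \(T^\delta\) is universal, model completion follows from this. Both rest on an \emph{extension lemma} for \(T\)-derivations. Let \(K\preceq\mathcal M'\) with \(K\) carrying a \(T\)-derivation, and let \(a\in M'\).
\begin{enumerate}[label=(\roman*)]
\item If \(a\in\operatorname{dcl}(K)\), say \(a=f(b)\) with \(f\) an \(\mathcal L(\varnothing)\)-definable \(\mathcal C^1\) function near the tuple \(b\) from \(K\), then \(\delta\) extends uniquely to \(K\langle a\rangle\) by \(\delta a:=\mathbf J_f(b)\delta b\).
\item If \(a\notin\operatorname{dcl}(K)\), then for every \(c\in M'\) there is a unique \(T\)-derivation on \(\operatorname{dcl}(K,a)\) extending \(\delta\) with \(\delta a=c\). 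It is defined by \(\delta g(b,a):=\mathbf J_g(b,a)(\delta b,c)\).
\end{enumerate}
For (ii), well-definedness and the chain rule for composites come from o-minimal calculus: the representation of elements of \(\operatorname{dcl}(K,a)\) by functions \(\mathcal C^1\) near a generic point is unique up to agreement on a neighbourhood. Quantifier elimination and universal axiomatizability of \(T\) guarantee that substructures are \(\operatorname{dcl}\)-closed, so this suffices. Iterating (ii) along a \(\operatorname{dcl}\)-basis, with (i) for dependent elements, extends \(T\)-derivations through arbitrary elementary extensions.

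For the existence part, fix \(M\models T^\delta\), an \(n\), and an \(\mathcal L(M)\)-definable \(A\subseteq M^{n+1}\) with \(\dim\Pi_n(A)=n\). I will choose an elementary extension containing a point \((a_0,\dots,a_n)\in A\) with \((a_0,\dots,a_{n-1})\) \(\operatorname{dcl}\)-independent over \(M\). Applying (ii) repeatedly gives a derivation with \(\delta a_i=a_{i+1}\) for \(i<n\), which is possible because \(a_i\notin\operatorname{dcl}(M,a_0,\dots,a_{i-1})\). Shrinking \(A\) first to a \(\mathcal C^1\) cell on which \(a_n\) is a \(\mathcal C^1\) function of the lower coordinates, or \(a_n\) generic, and using (i) or (ii) at the last step, gives \(\Jet_n(a_0)\in A\). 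A chain/union argument handling all pairs \((n,A)\) in turn then yields a model of \(T_g^\delta\) extending \(M\).

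For quantifier elimination, take \(M,N\models T_g^\delta\) with \(N\) \(|M|^+\)-saturated, a common \(\mathcal L^\delta\)-substructure \(K\), and \(a\in M\). I must extend \(K\hookrightarrow N\) to \(K\langle a\rangle_\delta\). There are two cases.
\begin{itemize}
\item If \(a\) is \(\delta\)-algebraic, let \(n\) be least with \(\delta^n a\in\operatorname{dcl}(K,\Jet_{n-1}(a))\), say \(\delta^n a=f(\Jet_{n-1}(a))\) with \(f\) \(\mathcal C^1\) on a cell \(C\) containing \(\Jet_{n-1}(a)\), which is generic in \(C\) over \(K\).
\item If \(a\) is \(\delta\)-transcendental, every finite jet of \(a\) is \(\operatorname{dcl}\)-independent over \(K\).
\end{itemize}
In either case I realize in \(N\) the partial type asserting that \(\Jet_{n-1}(x)\) realizes the \(\mathcal L\)-type of \(\Jet_{n-1}(a)\) over \(K\) and that \(\delta^n x=f(\Jet_{n-1}x)\); in the transcendental case I instead realize all finite levels. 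Finite satisfiability comes from the genericity axiom applied to sets \(A=\Gamma(f|_D)\) with \(D\subseteq C\) open \(K\)-definable, so \(\dim\Pi_n(A)=n\), and saturation finishes.

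The step I expect to be the main obstacle is verifying that the map \(\Jet(a)\mapsto\Jet(b)\) is an \(\mathcal L^\delta\)-embedding, i.e.\ that all higher derivatives match. Here the uniqueness parts of (i) and (ii), together with Fact~\ref{Chain rule for definable functions} (which gives \(\delta^{m+1}a\) as \(f^{[\delta]}\) plus a Jacobian term, both \(K\)-definable), show that \(\delta\) on \(\operatorname{dcl}(K,\Jet_{n-1}a)\) is determined by the data already transferred.
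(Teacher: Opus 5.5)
Your argument is correct and is essentially the proof in \cite{FornasieroKaplan2020}, which the paper cites rather than reproves: you extend \(T\)-derivations uniquely along a \(\operatorname{dcl}\)-basis with prescribed values, realize the \(\mathcal L\)-type of the relevant jet (in both the \(\delta\)-algebraic and the \(\delta\)-transcendental case) in a saturated model of \(T_g^\delta\) via the genericity axiom, and use uniqueness of the extended derivation to see that the resulting \(\mathcal L\)-embedding commutes with \(\delta\). The only difference is the order of deduction: you prove quantifier elimination directly and infer model completion, whereas one can equally first prove the embedding criterion for model completion and then obtain quantifier elimination from the universal axiomatizability of \(T^\delta\).
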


When \(T=\RCF\), the theory \(T_g^\delta\) is \(\CODF\).

The following two facts are used repeatedly to pass between differential definability and ordinary definability in finite jet spaces.

\begin{fact}\cite[Lemma~4.11]{FornasieroKaplan2020}\label{Forget the derivation}
For every \(\mathcal L^\delta\)-formula \(\varphi(x)\), possibly with parameters, there exist \(m\in\mathbb N\) and an \(\mathcal L\)-formula \(\widetilde\varphi\) such that
\[
T_g^\delta\vdash \forall x\bigl(\varphi(x)\leftrightarrow \widetilde\varphi(\Jet_m(x))\bigr).
\]
\end{fact}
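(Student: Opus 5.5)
The plan is to combine quantifier elimination for \(T_g^\delta\) (Fact~\ref{Model companion of T derivations}) with an induction on the complexity of quantifier-free \(\mathcal L^\delta\)-formulas. Since \(T_g^\delta\) has quantifier elimination, \(\varphi(x)\) is \(T_g^\delta\)-equivalent to a quantifier-free \(\mathcal L^\delta\)-formula \(\psi(x)\), with the same parameters \(b\). It therefore suffices to treat quantifier-free formulas. The parameters \(b\) are absorbed by regarding them as extra variables: we prove the claim for \(\psi(x,y)\) and then substitute \(b\). The jets \(\Jet_m(b)\) then appear as \(\mathcal L\)-parameters in \(\widetilde\varphi\), which is permitted because \(\widetilde\varphi\) may carry parameters. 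If one insists on using only \(b\) itself, one simply notes that the parameters \(\Jet_m(b)\) are elements of \(M\).

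The main step is to show that every \(\mathcal L^\delta\)-term \(t(x)\) agrees, on all models of \(T^\delta\), with \(g(\Jet_m(x))\) for some \(\mathcal L(\varnothing)\)-definable function \(g\) and some \(m\). I would argue by induction on terms. Variables are immediate. For \(t=f(t_1,\dots,t_k)\) with \(f\) a function symbol, we compose: \(f\) applied to \(g_i(\Jet_{m_i}(x))\) is again \(\mathcal L(\varnothing)\)-definable in \(\Jet_{\max m_i}(x)\), since \(\mathcal L\) is the definitional expansion \(\mathcal L^{\mathrm{df}}\).

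The hard case is \(t=\delta s\) with \(s=g(\Jet_m(x))\). The function \(g\) need not be \(\mathcal C^1\) everywhere. I would take an \(\mathcal L(\varnothing)\)-definable \(\mathcal C^1\)-cell decomposition of the domain of \(g\) and work cell by cell. On an open cell \(U\), the \(T\)-derivation axiom gives
\[
\delta g(u)=\mathbf J_g(u)\delta u,
\]
with \(u=\Jet_m(x)\) and \(\delta u\) a subtuple of \(\Jet_{m+1}(x)\). Hence \(\delta s=h(\Jet_{m+1}(x))\), where \(h\) is the \(\mathcal L(\varnothing)\)-definable function \((u,v)\mapsto \mathbf J_g(u)v\) defined on the cell.

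On a lower-dimensional cell \(C\), I would write \(C\) as the graph of \(\mathcal L(\varnothing)\)-definable \(\mathcal C^1\) functions over an open cell in a coordinate projection. Then \(g|_C\) is a \(\mathcal C^1\) function of the free coordinates, and the dependent coordinates are \(\mathcal C^1\) functions of them. Applying the chain rule to the free coordinates gives the same conclusion. This uses only \(\mathcal C^1\) functions on open sets, as the axiom requires.

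Membership of \(\Jet_m(x)\) in each cell is \(\mathcal L(\varnothing)\)-definable, so we glue the pieces into one \(\mathcal L(\varnothing)\)-definable \(h\) with \(\delta s=h(\Jet_{m+1}(x))\).

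Given the term step, each atomic formula \(t_1=t_2\) or \(t_1<t_2\) becomes an \(\mathcal L\)-formula in \(\Jet_m(x)\) for a suitable \(m\). We raise all orders to a common \(m\) by padding with dummy jet coordinates, and Boolean combinations are then immediate. This yields \(\widetilde\varphi\).

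I expect the main obstacle to be the \(\delta\)-step on non-open cells, where the chain rule is not directly available. The graph-of-\(\mathcal C^1\)-functions reduction above is the first thing I would try.
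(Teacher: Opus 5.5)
Your proof is correct. The paper gives no argument of its own (it simply cites \cite[Lemma~4.11]{FornasieroKaplan2020}). Your route is the standard argument for this fact: quantifier elimination for \(T_g^\delta\), then induction on terms, where the \(\delta\)-step uses the \(T\)-derivation chain rule on an \(\mathcal L(\varnothing)\)-definable \(\mathcal C^1\)-cell decomposition adapted to \(g\). On lower-dimensional cells you eliminate the dependent coordinates, and on point cells \(\delta\) of an \(\mathcal L(\varnothing)\)-definable constant is \(0\).

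The one thing to fix is your closing remark on parameters. The formula you obtain uses \(\Jet_m(b)\), not \(b\), and this cannot be avoided in general. Suppose \(b,\delta b,\delta^2 b,\ldots\) are \(\mathcal L\)-independent. Then \(\Jet_m(\delta b)\) is generic over \(b\), so any \(\mathcal L(b)\)-definable set containing it contains a box around it. By Fact~\ref{Jet space is dense}, that box contains other realized jets, so \(x=\delta b\) has no \(\mathcal L(b)\)-translation. Since the Fact allows arbitrary parameters, this does not affect your proof.
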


\begin{fact}\cite[Lemma~5.5]{FornasieroKaplan2020}\label{Jet space is dense}
For every \((\mathcal M,\delta)\models T_g^\delta\), and all \(m,n\in\mathbb N\), the set \(\Jet_{(m,\ldots,m)}(M^n)\) is dense in \(M^{n(m+1)}\).
\end{fact}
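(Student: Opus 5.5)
The plan is to check density against a basis of the product topology, and to reduce the case of several variables to the one-variable genericity axiom, one coordinate at a time. Fix \((\mathcal M,\delta)\models T_g^\delta\) and \(m,n\in\mathbb N\). Open boxes form a basis of the topology on \(M^{n(m+1)}\). So it suffices to show the following: for every nonempty open box
\[
B=\prod_{i=1}^n\prod_{j=0}^m I_{i,j},
\]
with each \(I_{i,j}\subseteq M\) a nonempty open interval, there is \(a=(a_1,\ldots,a_n)\in M^n\) with \(\Jet_{(m,\ldots,m)}(a)\in B\). The coordinates are grouped so that the block \(B_i:=\prod_{j=0}^m I_{i,j}\subseteq M^{m+1}\) corresponds to \(\Jet_m(a_i)\).

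Because \(B\) is a product of the blocks \(B_i\), the condition \(\Jet_{(m,\ldots,m)}(a)\in B\) splits into the \(n\) independent conditions \(\Jet_m(a_i)\in B_i\). Each \(a_i\) can therefore be chosen separately; no interaction between coordinates has to be controlled at this stage.

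For a single block, I apply the genericity scheme directly with \(A:=B_i\subseteq M^{m+1}\). This set is \(\mathcal L(M)\)-definable, using the endpoints of the intervals as parameters. Its projection \(\Pi_m(A)\) onto the first \(m\) coordinates is the open box \(\prod_{j=0}^{m-1}I_{i,j}\). That box is nonempty and open in \(M^m\), so it has dimension \(m\). When \(m=0\) the condition is vacuous, or \(\Pi_0(A)\) is taken to be a point of dimension \(0\). Genericity then provides \(a_i\in M\) with \(\Jet_m(a_i)\in A=B_i\).

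Doing this for \(i=1,\ldots,n\) gives the required \(a\). The only point needing care is bookkeeping: matching the indexing convention of \(\Pi_m\) in the genericity axiom (projection onto the first \(m\) of the \(m+1\) coordinates \(x,\delta x,\ldots,\delta^m x\)) with the order of the jet coordinates. I expect no genuine obstacle beyond this, because the product structure of the boxes removes any need for a multivariable genericity statement.
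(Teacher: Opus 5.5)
Your proof is correct. The paper gives no argument of its own here; it simply imports the fact from Fornasiero--Kaplan, so there is no internal proof to compare with.

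Your derivation is a complete, self-contained proof from the genericity scheme exactly as the paper records it in Section~2. The key points are these:
\begin{itemize}
\item Since \(\Jet_{(m,\ldots,m)}(a)=(\Jet_m(a_1),\ldots,\Jet_m(a_n))\) and a box is a product of its blocks, the multivariable statement splits into \(n\) independent one-variable problems.
\item Each one-variable problem is a single instance of the scheme, applied to the \(\mathcal L(M)\)-definable box \(B_i\). Its projection onto the first \(m\) coordinates is a nonempty open box, hence of dimension \(m\).
\end{itemize}
In particular, density needs no multivariable form of genericity. Your argument therefore turns a cited result into an immediate consequence of the axioms as stated in the paper's preliminaries.
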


We shall also use that \(T\) is the open core of \(T_g^\delta\).

\begin{fact}\cite[Proposition~5.12]{FornasieroKaplan2020}\label{Open core}
Every open \(\mathcal L^\delta(A)\)-definable subset of \(M^n\) is \(\mathcal L(A)\)-definable.
\end{fact}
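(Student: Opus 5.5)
The plan is to prove that every open \(\mathcal L^\delta(A)\)-definable set \(U\subseteq M^n\) is \(\mathcal L(A)\)-definable by comparing \(U\) with the interior of its \(\mathcal L\)-trace at the jet level. First apply Fact~\ref{Forget the derivation}: there are \(m\in\mathbb N\) and an \(\mathcal L(A)\)-formula \(\widetilde\varphi\) with
\[
U=\{x\in M^n:\ \Jet_{(m,\ldots,m)}(x)\in W\},
\]
where \(W\subseteq M^{n(m+1)}\) is the \(\mathcal L(A)\)-definable set cut out by \(\widetilde\varphi\). Write \(\pi\) for the projection \(M^{n(m+1)}\to M^n\) onto the level-zero coordinates. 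The candidate \(\mathcal L(A)\)-definition of \(U\) is
\[
V:=\{x\in M^n:\ \text{for every } \varepsilon>0 \text{ there is a box } B\ni x \text{ with } \dim\bigl(W\cap \pi^{-1}(B)\bigr)\text{ large in every fiber sense}\},
\]
or more concretely the set of \(x\) admitting an open box \(B\ni x\) such that \(\pi^{-1}(B)\setminus W\) has empty interior in \(M^{n(m+1)}\). This is \(\mathcal L(A)\)-definable because dimension and interior are \(\mathcal L\)-definable in o-minimal \(T\).

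The two inclusions use different ingredients. For \(V\subseteq U\), take \(x\in V\) with witnessing box \(B\). The set \(\pi^{-1}(B)\setminus W\) has empty interior, so \(\operatorname{int}(W)\) is dense in \(\pi^{-1}(B)\). To conclude \(\Jet(x)\in W\) one would need more than density, and this is where the argument must be refined. The refinement uses openness of \(U\) in the topology where \(\delta\) is unconstrained. I would replace \(V\) by the set of \(x\) that have a box \(B\ni x\) with \(B\times M^{nm}\subseteq W\) up to a set whose fibers over \(B\) have empty interior. I would then use genericity, via Fact~\ref{Jet space is dense} applied over a model containing \(x\) together with the \(\delta\)-generic behaviour, to argue that points of \(M^n\) whose jets avoid a given nowhere-dense \(\mathcal L(A)\)-definable set are exactly those outside a lower-dimensional \(\mathcal L(A)\)-definable subset of \(M^n\).

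For \(U\subseteq V\), take \(x\in U\) and a box \(B\ni x\) with \(B\subseteq U\). Then \(\Jet(B)\subseteq W\). By Fact~\ref{Jet space is dense} relativized to \(B\), \(\Jet(B)\) is dense in \(B\times M^{nm}\), so \(\pi^{-1}(B)\cap W\) is dense in \(\pi^{-1}(B)\). Since \(W\) is \(\mathcal L\)-definable, the o-minimal fact that a dense definable set has nowhere-dense complement gives that \(\pi^{-1}(B)\setminus W\) has empty interior, so \(x\in V\).

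The main obstacle is the first inclusion. Density of \(W\) over \(B\) does not immediately place \(\Jet(x)\) itself in \(W\). I would first try cell-decomposing \(W\) compatibly and showing that the bad set \(D=\pi^{-1}(B)\setminus \operatorname{int}(W)\) has dimension \(<n(m+1)\). One then needs jets of points \(x\) in a definable set avoiding \(D\) generically. Here \(V\) must be shrunk to the interior of \(V\cap U\) computed with an \(\mathcal L(A)\)-definable description. If this fails, I would instead use the characterization that \(\operatorname{cl}\) and \(\operatorname{int}\) in the order topology of an \(\mathcal L^\delta\)-definable set are \(\mathcal L\)-definable, proving it by quantifier elimination (Fact~\ref{Model companion of T derivations}) together with a back-and-forth over \(\operatorname{dcl}_{\mathcal L}(A)\).
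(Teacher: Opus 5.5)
The paper gives no proof of this statement; it is quoted from Fornasiero--Kaplan. Your inclusion \(U\subseteq V\) is correct: \(\Jet_{(m,\ldots,m)}(B)\) is dense in the open set \(B\times M^{nm}\) and lies in \(W\), so \(\pi^{-1}(B)\setminus W\) has empty interior. The reverse inclusion, however, is false, and none of your proposed repairs closes the gap. The set \(W\) supplied by Fact~\ref{Forget the derivation} is pinned down only on realized jets, whereas \(V\) depends on \(W\) only up to a definable set with empty interior. For instance, take \(n=m=1\). The sets \(W=\{(x,y):x\neq0\}\cup\{(0,y):y\neq0\}\) and \(M^2\) differ only at \((0,0)\), and both give \(V=M\). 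Yet, since \(\delta0=0\), they are legitimate jet-level descriptions of \(M\setminus\{0\}\) and of \(M\), respectively. So \(0\in V\setminus U\) for \(U=M\setminus\{0\}\). Your refinement rests on the claim that the points whose jets lie in a nowhere dense \(\mathcal L(A)\)-definable set form a lower-dimensional \(\mathcal L(A)\)-definable set. This is false. For \(Z=\{(x,y):y=0\}\), those points are the constants, which are dense in \(M\) by the genericity axiom (apply it to \(I\times\{0\}\) for each open interval \(I\)) and are not \(\mathcal L\)-definable at all; this is the dense--codense phenomenon of Proposition~\ref{Non L definable one variable delta cells are dense and codense}. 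Shrinking \(V\) to the interior of \(V\cap U\) presupposes the conclusion. The back-and-forth fallback amounts to asserting that membership in \(\cl(X)\) depends only on the \(\mathcal L(A)\)-type of the point, which is the whole content of the statement and is left unproved.

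What is missing is a way to compute Euclidean closures, rather than to test individual points. It suffices to show that \(\cl(X)\) is \(\mathcal L(A)\)-definable for every \(\mathcal L^\delta(A)\)-definable \(X\subseteq M^n\), because then \(U=M^n\setminus\cl(M^n\setminus U)\). To do this, write \(X\) as a finite union of sets \(X_i=\Jet_{(m,\ldots,m)}^{-1}(W_i)\) with \(W_i\) \(\mathcal L(A)\)-definable and \(\Jet_{(m,\ldots,m)}(X_i)\) dense in \(W_i\). Continuity of \(\pi\) then gives \(\cl(X_i)=\cl(\pi(W_i))\). Producing such \(W_i\) is where the derivation genuinely enters. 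Cells of a decomposition of \(W\) whose layers are incompatible with the chain rule are cut down to the \(\mathcal L(A)\)-definable locus where the identities given by \(\mathcal S\) hold, which lowers dimension. On the remaining cells, density of realized jets comes from the genericity axiom together with Lemma~\ref{Going up property}. You cannot shortcut this by quoting Theorem~\ref{Refined delta decomposition theorem}, since its density-refinement step invokes the very fact being proved. Note also that your first step tacitly assumes \(A\) is closed under \(\delta\): in general \(\widetilde\varphi\) needs the parameters \(\delta^j c\) with \(c\in A\), and the statement should be read accordingly. Indeed, if \(\delta a\notin\operatorname{dcl}_{\mathcal L}(a)\), the open set \(\{x:x>\delta a\}\) is not \(\mathcal L(a)\)-definable.
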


Consequently, if \(X\subseteq M^n\) is \(\mathcal L^\delta(A)\)-definable, then its Euclidean closure and Euclidean interior are \(\mathcal L(A)\)-definable.

\subsection{Hardy fields and convex pairs}\label{Subsection Hardy fields and convex pairs}

We recall the Hardy-field notation used in the construction of the germ space. Let \(\mathcal M\) be an o-minimal expansion of a real closed field, and let \(a\in M\). A germ at \(a^+\) is an equivalence class of definable functions \(f:(a,b)\to M\), where two functions are equivalent if they agree on some interval \((a,c)\). We denote the germ of \(f\) by \([f]\).

Let \(\mathcal H_{a^+}\) be the set of germs at \(a^+\) of unary \(\mathcal M\)-definable functions. By o-minimality, after restricting the domain if necessary, definable functions are continuous and the usual field operations on representatives are well-defined on germs. Thus \(\mathcal H_{a^+}\) is an ordered field, called the Hardy field of definable germs at \(a^+\). If \(a\) is fixed, we write \(\mathcal H\) instead of \(\mathcal H_{a^+}\).

The constant germs identify \(M\) with an elementary substructure of \(\mathcal H\). More precisely, the map \(c\mapsto[c]\), where \([c]\) is the germ of the constant function with value \(c\), is an elementary embedding \(\mathcal M\preccurlyeq\mathcal H\). We shall usually identify \(M\) with its image in \(\mathcal H\). See \cite{MillerHardy}.

Let \(\mathcal K\) be a Hardy field extending \(M\). We write \(\mathcal H(M)\) for the convex hull of \(M\) in \(K\), namely
\[
\mathcal H(M):=\{x\in K: |x|\leq c \text{ for some } c\in M^{>0}\}.
\]
Its maximal ideal is
\[
\mathfrak m_{\mathcal K}:=\{x\in K: |x|<c \text{ for every } c\in M^{>0}\}.
\]
Equivalently, \(\mathfrak m_{\mathcal K}\) is the set of infinitesimals of \(K\) over \(M\).

We shall use the pair \((K,\mathcal H(M))\), where \(\mathcal H(M)\) is named by a unary predicate. This pair is an example of a \(T\)-convex structure: the predicate names a convex subring which is closed under all continuous \(\mathcal L(\varnothing)\)-definable functions. The theory of such pairs is denoted by \(T_{\mathrm{convex}}\). We use the standard facts that \(T_{\mathrm{convex}}\) is complete, admits quantifier elimination in the language \(\mathcal L_{\mathrm{convex}}\), and is weakly o-minimal; see \cite{vandendriesLewenberg1995,vandenDries1997}. This is the structure in which the definable neighbourhoods of abstract curve germs will be taken.

\section{Jets and \texorpdfstring{$\delta$}{Delta}-Cells}

Throughout this section, fix \((\mathcal M,\delta)\models T_g^\delta\). We begin by recalling the finite jet spaces and the topology induced by them. These notions provide the ambient o-minimal spaces in which source cells and configurations will live.

\subsection{Finite jets and the \texorpdfstring{$\delta$}{delta}-topology}

Let \(k>0\) and let \(n=(n_1,\ldots,n_k)\in\mathbb N^k\). Set
\[
E_n:=\prod_{i=1}^k M^{n_i+1}.
\]
For \(a=(a_1,\ldots,a_k)\in M^k\), define
\[
\Jet_n(a):=(a_1,\delta a_1,\ldots,\delta^{n_1}a_1,\ldots,a_k,\delta a_k,\ldots,\delta^{n_k}a_k)\in E_n.
\]
When \(k=1\), we write \(\Jet_m(a)\) instead of \(\Jet_{(m)}(a)\).

We order \(\mathbb N^k\) coordinatewise. Thus \(m\leq n\) means that \(m_i\leq n_i\) for all \(i=1,\ldots,k\). If \(m\leq n\), let \(\Pi_{m,n}:E_n\to E_m\) be the coordinate projection forgetting the derivatives of order \(>m_i\) in the \(i^{\mathrm{th}}\) block. Then \(\Pi_{m,n}\circ\Jet_n=\Jet_m\). When the ambient order \(n\) is clear, we write \(\Pi_m\) instead of \(\Pi_{m,n}\).

The finite jet space \(E_n\) is equipped with the sup metric
\[
d_{(0,\ldots,0)}(x,y):=\max |x_\ell-y_\ell|,
\]
where the maximum is taken over all coordinates of \(E_n\). Pulling this metric back along \(\Jet_n\), we obtain a metric \(d_n\) on \(M^k\), namely
\[
d_n(a,b):=\max_{1\leq i\leq k}\max_{0\leq j\leq n_i}|\delta^j a_i-\delta^j b_i|.
\]
Let \(\mathcal T_n\) be the topology induced by \(d_n\). Equivalently, \(\mathcal T_n\) is the pullback along \(\Jet_n\) of the Euclidean topology on \(E_n\).

If \(m\leq n\), then \(\mathcal T_m\subseteq\mathcal T_n\). Since \(\mathbb N^k\) is directed, the union
\[
\mathcal T_\delta:=\bigcup_{n\in\mathbb N^k}\mathcal T_n
\]
is a topology on \(M^k\). We call it the \(\delta\)-topology. Equivalently, \(\mathcal T_\delta\) is the topology generated by the finite jet maps. In particular, it is the coarsest topology refining the original o-minimal topology for which the derivation \(\delta\) is continuous.

The following finite-jet description of definable \(\delta\)-open sets is the analogue, in the present setting, of Proposition~3.3 of \cite{BrihayeMichauxRivière2009}. The proof is the same, using the density of finite jet spaces from Fact~\ref{Jet space is dense}.

\begin{fact}\label{Finite jet characterization of delta topology}
Let \(X\subseteq M^k\) be \(\mathcal L^\delta(M)\)-definable. Then \(X\) is open in the \(\delta\)-topology if and only if there are \(n\in\mathbb N^k\) and an \(\mathcal L(M)\)-definable Euclidean open set \(U\subseteq E_n\) such that \(X=\Jet_n^{-1}(U)\).
\end{fact}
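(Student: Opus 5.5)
The easy direction is immediate. Suppose \(X=\Jet_n^{-1}(U)\) with \(U\subseteq E_n\) Euclidean open. Then \(X\) is open in \(\mathcal T_n\), since \(\mathcal T_n\) is by definition the pullback of the Euclidean topology along \(\Jet_n\). Hence \(X\in\mathcal T_\delta\).

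For the converse, assume \(X\) is \(\mathcal L^\delta(M)\)-definable and \(\delta\)-open.

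\emph{Step 1: reduce to a single jet level.} By Fact~\ref{Forget the derivation} (applied coordinatewise, with one common order), there are \(m\in\mathbb N\), the tuple \(n=(m,\ldots,m)\), and an \(\mathcal L(M)\)-definable set \(Y\subseteq E_n\) such that \(X=\Jet_n^{-1}(Y)\). We may replace \(Y\) by \(Y\cap\Jet_n(M^k)\). This changes nothing, but the candidate open set must be built from \(Y\) itself, because \(\Jet_n(M^k)\) is not \(\mathcal L\)-definable. Set \(U:=\operatorname{int}(Y)\), the Euclidean interior. This is an \(\mathcal L(M)\)-definable open set, and \(\Jet_n^{-1}(U)\subseteq X\).

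\emph{Step 2: show \(X\subseteq\Jet_n^{-1}(U)\).} This is the main obstacle. The \(\delta\)-openness of \(X\) gives, around each \(a\in X\), a \(d_N\)-ball inside \(X\) for some order \(N\). This \(N\) may be larger than \(n\) and may depend on \(a\), so we only learn that \(Y\) contains the image under \(\Pi_{n,N}\) of jets in an \(N\)-neighbourhood. To handle this, I would first enlarge \(n\) so that \(Y\) is in good position. Using o-minimal cell decomposition of \(Y\) in \(E_n\), together with the genericity axiom in its multivariable form, every cell of \(Y\) that is not open should have a realized-jet trace that is nowhere dense in the \(\delta\)-topology. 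The key claim is:

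\emph{Claim.} If \(C\subseteq E_n\) is an \(\mathcal L(M)\)-definable set with empty interior, then \(\Jet_n^{-1}(C)\) has empty \(\delta\)-interior.

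\emph{Proof of the claim.} Suppose \(\Jet_n^{-1}(C)\) contained a \(d_N\)-ball \(B\) around \(a\), with \(N\geq n\). Then \(\Jet_N(B)=\Jet_N(M^k)\cap V\) for an open box \(V\subseteq E_N\). By Fact~\ref{Jet space is dense}, \(\Jet_N(M^k)\) is dense, and I would like the genericity axiom to show it is dense in a strong sense: it meets every \(\mathcal L(M)\)-definable subset of \(V\) of full dimension in each new coordinate. Then \(\Pi_{n,N}^{-1}(C)\cap V\) would have to contain a definable set of full dimension in \(E_N\). That forces \(C\) to have nonempty interior, a contradiction. More concretely, apply Fact~\ref{Jet space is dense} to the open set \(V\setminus\Pi_{n,N}^{-1}(\overline{C})\). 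It is nonempty because \(C\) has empty interior and \(\overline C\setminus C\) is small, so it contains a jet \(\Jet_N(b)\) with \(b\in B\) and \(\Jet_n(b)\notin C\).

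\emph{Conclusion.} Decompose \(Y=U\cup (Y\setminus U)\), where \(Y\setminus U\) has empty interior. Then \(X\setminus\Jet_n^{-1}(U)\subseteq\Jet_n^{-1}(Y\setminus U)\), which has empty \(\delta\)-interior by the claim. The delicate point is to upgrade this to emptiness. For that, refine \(Y\) by replacing \(U\) with \(\operatorname{int}(\overline{U}\cup Y)\), or better, use the same density argument locally. Fix \(a\in X\) with a \(d_N\)-ball \(B\subseteq X\), and let \(V\) be the corresponding box in \(E_N\). By density, jets in \(V\) avoid the boundary set \(\Pi_{n,N}^{-1}(\overline{Y\setminus U})\) only on a dense set, and all of them lie in \(\Pi_{n,N}^{-1}(Y)\). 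Hence \(\Pi_{n,N}(V)\) is contained in \(\overline{U}\). Since \(\Pi_{n,N}(V)\) is open, it lies in \(\operatorname{int}\overline U\). Finally, replace \(U\) by \(\operatorname{int}(\overline U)\cap\operatorname{int}(\overline Y)\), chosen so that it still pulls back into \(X\). This last adjustment, showing that the regularized open set still pulls back inside \(X\), is where I expect the real work. I would try to handle it by again invoking density of jets to show that any jet landing in \(\operatorname{int}\overline U\setminus Y\) would produce points outside \(X\) arbitrarily \(\delta\)-close to points of \(X\).
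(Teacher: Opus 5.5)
Your easy direction is fine, and so is your Claim. More simply: $\Pi_{n,N}^{-1}(\overline C)$ has empty interior in $E_N$, so by Fact~\ref{Jet space is dense} the box $V$ contains a realized jet outside it. The converse, however, is not proved, and the route you sketch cannot prove it. Everything after the Claim works at the fixed level $n$ supplied by Fact~\ref{Forget the derivation}, and a $\delta$-open set need not be $\mathcal T_n$-open at that level.

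The paper's Example~\ref{Example finite closure does not stabilize immediately} shows this. Take $C=\{x>0,\ (\delta x)^2=x\}$. Then $\cl_2(C)=C$, so $X=M\setminus C$ is $\delta$-open, and $X$ is defined at level $1$ by $Y=\{(x,x'):x\le 0\ \text{or}\ x'^2\neq x\}$. But $0\in X$ while $\Jet_1(0)=(0,0)\in\cl(\Jet_1(C))$. So $X$ is not $\mathcal T_1$-open, and no open $U\subseteq E_1$ satisfies $X=\Jet_1^{-1}(U)$. In particular, your regularization $\operatorname{int}(\overline U)\cap\operatorname{int}(\overline Y)$ equals $E_1$ here and pulls back to $M\neq X$. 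Knowing that $X\setminus\Jet_n^{-1}(\operatorname{int}Y)$ has empty $\delta$-interior gives no way of making it empty. Your step ``first enlarge $n$ so that $Y$ is in good position'' is precisely the unproved step.

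What is actually needed is a single level $N$ at which every point of $X$ is a $\mathcal T_N$-interior point. Given such an $N$, set $U:=E_N\setminus\cl(\Jet_N(M^k\setminus X))$. This $U$ is open, is $\mathcal L(M)$-definable by Fact~\ref{Open core}, and satisfies $\Jet_N^{-1}(U)=X$. (Read literally, the paper's definition $\mathcal T_\delta=\bigcup_n\mathcal T_n$ would make this the whole proof. But that union is not closed under arbitrary unions, and you are right to use the generated topology, as the paper does elsewhere.)

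Density of jets is all you use, and all the paper cites, but it says nothing about such an $N$. For general $T$, such an $N$ need not exist. Suppose $T$ defines $x^y$ (e.g.\ $T=\operatorname{Th}(\mathbb R_{\exp})$), and let $F=\{(x,y):x>0,\ \tfrac12\le y\le1,\ \delta x=x^y\}\cup\{(0,1)\}$.
\begin{itemize}
\item On $F$ one computes $\delta^jx=x^{1-j(1-y)}\bigl(c_j(y)+o(1)\bigr)$ as $x\to0^+$, locally uniformly for $y<1$ and bounded $\delta^iy$. Here $c_1=1$ and $c_{j+1}=c_j\,(1-j(1-y))$.
\item Write $\beta=y-1+\tfrac1L$. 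Approaching with $\beta\downarrow0$ and $x=e^{-1/\beta^2}$ shows $(0,1-\tfrac1L)\in\cl_{(L,L-1)}(F)$.
\item On the other hand, convergence of $\delta^Lx\approx c_L\,x^{L\beta}$ forces $\beta\lvert\log x\rvert\to\infty$. Then $\lvert\delta^{L+1}x\rvert\approx\lvert c_L\rvert L\beta\,x^{L\beta-(1-y)}\to\infty$, so $(0,1-\tfrac1L)\notin\cl_{(L+1,L)}(F)$.
\item One checks $\cl_\delta(F)=F$.
\end{itemize}
Hence $M^2\setminus F$ is definable and $\delta$-open, but it is not $\mathcal T_n$-open for any $n$. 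So no refinement of your density argument can close the gap in this generality; the statement as given for general $T$ fails. What can be proved along these lines is only the level-wise version: a definable $\mathcal T_n$-open set is $\Jet_n^{-1}$ of an $\mathcal L(M)$-definable Euclidean open set.
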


For \(X\subseteq M^k\), let \(\cl_n(X)\) denote the closure of \(X\) with respect to \(\mathcal T_n\), and let \(\cl_\delta(X)\) denote the closure with respect to \(\mathcal T_\delta\). Then
\[
a\in\cl_n(X)
\quad\text{if and only if}\quad
\Jet_n(a)\in\cl(\Jet_n(X)),
\]
where the closure on the right is Euclidean closure in \(E_n\). Since \(\mathcal T_m\subseteq\mathcal T_n\) whenever \(m\leq n\), the corresponding closures satisfy \(\cl_n(X)\subseteq\cl_m(X)\).

\subsection{\texorpdfstring{$\delta$}{Delta}-cells}

Let \(X\subseteq M^k\) be \(\mathcal L^\delta(B)\)-definable. By Fact~\ref{Forget the derivation}, after increasing the order if necessary, there are \(n\in\mathbb N^k\) and an \(\mathcal L(B)\)-definable set \(X^{\mathcal L}\subseteq E_n\) such that \(\Jet_n(X)=X^{\mathcal L}\cap\Jet_n(M^k)\). Thus an \(\mathcal L^\delta\)-definable set can be represented by an ordinary \(\mathcal L\)-definable set in a finite jet space.

We now recall the cell-theoretic version of this representation. The terminology in the literature is slightly weaker than the one used in this paper. In \cite{BrihayeMichauxRivière2009}, a \(\delta\)-cell is defined by requiring that, for some finite jet level, the corresponding source set is an ordinary cell in the finite jet space. For our purposes, we strengthen this definition by requiring that the realized jet space be dense in the chosen source cell. This density condition ensures that the source cell records only the part of the finite jet space which is approached by actual jets.

Let \(\varphi(x)\) be a quantifier-free \(\mathcal L^\delta(B)\)-formula, where \(x=(x_1,\ldots,x_k)\). For each \(i=1,\ldots,k\), let \(n_i\) be the largest integer such that \(\delta^{n_i}x_i\) occurs in \(\varphi\), and set \(n_i=0\) if no derivative of \(x_i\) occurs. We write \(\ord(\varphi):=(n_1,\ldots,n_k)\).

Let \(n=\ord(\varphi)\). By replacing each occurrence of \(\delta^j x_i\) in \(\varphi\) by the corresponding coordinate \(x_i^{(j)}\) of \(E_n\), we obtain an \(\mathcal L(B)\)-formula in the jet variables. If \(C\subseteq M^k\) is defined by \(\varphi\), we denote the subset of \(E_n\) defined by this \(\mathcal L(B)\)-formula by \(C_\varphi^{\mathcal L}\).

Let \(C\subseteq M^k\) be \(\mathcal L^\delta(B)\)-definable. A formula \(\varphi(x)\) defining \(C\) is called a \emph{source formula} for \(C\) if, writing \(n=\ord(\varphi)\), the set \(C_\varphi^{\mathcal L}\subseteq E_n\) is an o-minimal cell and \(\Jet_n(C)\) is Euclidean dense in \(C_\varphi^{\mathcal L}\). In this case \(C_\varphi^{\mathcal L}\) is called the \emph{source cell of \(C\) with respect to \(\varphi\)}. A set \(C\subseteq M^k\) is called a \emph{\(\delta\)-cell} if it admits a source formula.

A finite collection \(\mathcal D\) of \(\delta\)-cells in \(M^k\) is called a \emph{\(\delta\)-decomposition} of \(M^k\) if it is a partition of \(M^k\) and is compatible with coordinate projections in the following inductive sense. If \(k=1\), this means simply that \(\mathcal D\) is a finite partition of \(M\) into \(\delta\)-cells. If \(k>1\), then
\[
\Pi_{k-1}(\mathcal D):=\{\Pi_{k-1}(D):D\in\mathcal D\}
\]
is, after removing repetitions, a \(\delta\)-decomposition of \(M^{k-1}\), and for each \(D\in\mathcal D\), the projection \(\Pi_{k-1}(D)\) belongs to \(\Pi_{k-1}(\mathcal D)\).

If \(A_1,\ldots,A_l\subseteq M^k\), we say that \(\mathcal D\) is \emph{compatible} with \(A_1,\ldots,A_l\) if, for every \(D\in\mathcal D\) and every \(i=1,\ldots,l\), either \(D\subseteq A_i\) or \(D\cap A_i=\varnothing\).

The following theorem is a refinement, in the o-minimal setting, of the \(\delta\)-cell decomposition results mentioned above. Cubides Kovacsics and Point prove a \(\delta\)-cell decomposition theorem for topological fields with a generic derivation, and in \cite[Remark~3.2.4]{CubidesPoint2023} they observe that, in the \(\CODF\) case, their argument may be carried out with o-minimal cells. The refinement below keeps track of the parameter set, compatibility with a prescribed finite family, and the smoothness of the source cells. In particular, when \(T\) admits \(\mathcal C^p\)-cell decomposition, the source cells may be chosen of class \(\mathcal C^p\).

\begin{theorem}[Refined \(\delta\)-decomposition theorem]\label{Refined delta decomposition theorem}
Let \((\mathcal M,\delta)\models T_g^\delta\), let \(B\subseteq M\), and fix \(p\in\mathbb N\). For every finite collection \(\mathcal A=\{A_1,\ldots,A_l\}\) of \(\mathcal L^\delta(B)\)-definable subsets of \(M^k\), there is a finite \(\delta\)-decomposition \(\mathcal D\) of \(M^k\), definable over \(B\), compatible with \(\mathcal A\), such that every \(D\in\mathcal D\) admits a source formula \(\varphi\) whose source cell \(D_\varphi^{\mathcal L}\) is of class \(\mathcal C^p\).
\end{theorem}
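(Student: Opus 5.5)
We argue by induction on \(k\), following the scheme of \cite{BrihayeMichauxRivière2009} and \cite[Remark~3.2.4]{CubidesPoint2023}, and add the density and \(\mathcal C^p\) requirements at each stage.

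First we pass to finite jet spaces. By Fact~\ref{Forget the derivation}, applied over \(B\) together with quantifier elimination (Fact~\ref{Model companion of T derivations}), there is a common \(n\in\mathbb N^k\) such that each \(A_i\) is defined by a quantifier-free \(\mathcal L^\delta(B)\)-formula whose \(\mathcal L(B)\)-translation \(A_i^{\mathcal L}\subseteq E_n\) satisfies \(\Jet_n(A_i)=A_i^{\mathcal L}\cap\Jet_n(M^k)\). We order the coordinates of \(E_n\) block by block, \(x_1,\delta x_1,\ldots,\delta^{n_1}x_1,\ldots,x_k,\ldots,\delta^{n_k}x_k\), so that the projection \(E_n\to E_{(n_1,\ldots,n_{k-1})}\) forgetting the last block is a coordinate projection onto an initial segment of coordinates. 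We then take a \(\mathcal C^p\)-cell decomposition \(\mathcal P\) of \(E_n\) over \(B\) compatible with the \(A_i^{\mathcal L}\), and compatible with the cell decomposition of \(E_{(n_1,\ldots,n_{k-1})}\) coming from the inductive step. For each \(P\in\mathcal P\), the set \(\Jet_n^{-1}(P)\) is \(\mathcal L^\delta(B)\)-definable, and these sets partition \(M^k\) compatibly with \(\mathcal A\). For \(k=1\) this gives a partition of \(M\) directly. For \(k>1\), the required compatibility with projections follows once the decomposition of \(E_n\) refines the pullback of a decomposition of \(E_{(n_1,\ldots,n_{k-1})}\) to which the inductive hypothesis has been applied. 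The formula defining \(\Jet_n^{-1}(P)\) is the cell formula for \(P\), written in jet variables. Its order may be smaller than \(n\) if some top coordinates do not occur; in that case we either pad the formula with trivially true conditions such as \(\delta^{n_i}x_i=\delta^{n_i}x_i\), or project the cell.

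The main obstacle is the density condition: \(\Jet_n(\Jet_n^{-1}(P))=P\cap\Jet_n(M^k)\) need not be dense in \(P\). Fact~\ref{Jet space is dense} only gives density of jets in open sets, so it says nothing directly about lower-dimensional cells. For example, \(P\) may be the graph of a function of \(\delta^{n_i}x_i\) in terms of lower coordinates that jets never satisfy, or satisfy only on a small subset. Our plan is an inner induction on \(\dim P\) and on the lexicographic order of the last \(\delta\)-variables. For each cell \(P\), let \(\bar P\) be the Euclidean closure of \(P\cap\Jet_n(M^k)\) inside \(P\). By the open core property (Fact~\ref{Open core}) and its consequence for closures, \(\bar P\) is \(\mathcal L(B)\)-definable. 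If \(\dim(P\setminus\bar P)=\dim P\), then \(P\setminus\bar P\) has nonempty interior in \(P\). We must then rule out that \(\bar P\) is lower-dimensional without being empty, and this is where genericity is used. A cell that is the graph of a \(\mathcal C^p\) function of \(\delta^{j}x_i\) over earlier coordinates amounts to a differential equation \(\delta^{j}x_i=f(\ldots)\). Differentiating by Fact~\ref{Chain rule for definable functions} expresses all higher \(\delta^{j'}x_i\) as definable functions of the lower ones. So we may replace \(P\) by a cell of smaller order in the last block, whose coordinates are free, and on which density follows from Fact~\ref{Jet space is dense} together with the genericity axiom applied fibrewise over the earlier blocks. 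If no such reduction is possible, the cell is open in its top jet coordinates and density again follows from genericity. Any leftover lower-dimensional pieces, such as \(P\setminus\bar P\) and boundary pieces, are decomposed again by induction on dimension; this terminates since dimensions in \(E_n\) are bounded.

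Finally, we recheck the projection compatibility after these refinements: every refinement made in the last block is fibred over cells of \(E_{(n_1,\ldots,n_{k-1})}\), and we refine the base decomposition by the projections of the new cells. Each resulting \(D\) then has a source formula, namely the formula of the reduced cell, whose source cell is a \(\mathcal C^p\)-cell in which \(\Jet(D)\) is dense. Everything is definable over \(B\), because every closure and decomposition used is \(\mathcal L(B)\)-definable.
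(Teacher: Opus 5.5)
There is a genuine gap in your density step. You rightly isolate \(\bar P\), the closure of \(P\cap\Jet_n(M^k)\) inside \(P\), which is \(\mathcal L(B)\)-definable by the open core. But you then say one must rule out that \(\bar P\) is nonempty and lower-dimensional, and this cannot be ruled out. Pick \(z\) with \(\delta z=z\neq0\) (by genericity) and let \(P=\{(x,x',x'')\in M^3:x'=x,\ x''=2x-z\}\). On realized jets, \(\delta a=a\) forces \(\delta^2a=a\), so \(P\cap\Jet_2(M)=\{\Jet_2(z)\}\), a point in a one-dimensional cell.

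Your remedy is to differentiate a graph layer \(\delta^jx_i=f\) and replace \(P\) by a lower-order cell ``whose coordinates are free''. This fails as stated, for two reasons.
\begin{enumerate}
\item Substituting \(\mathcal S(f)\) into the higher layers of \(P\) produces new equations among the lower coordinates; in the example they cut out a point. So the result is neither free nor in general a cell.
\item For \(k\geq2\), if \(f\) depends on a top coordinate \(x_h^{(n_h)}\) of an earlier block, then \(\mathcal S(f)\) involves \(x_h^{(n_h+1)}\). The reduced set then no longer lies over the decomposition of \(E_{(n_1,\ldots,n_{k-1})}\) fixed by your induction on \(k\). This already happens where density genuinely fails, e.g.\ \(y=x'\), \(y'=h(x,x',y)\), \(y''=g(x,x',y,y')\) in \(E_{(1,2)}\) for suitable \(g\).
\end{enumerate}

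The missing observation, which is all the paper uses, is that realized jets are dense in \(\bar P\) by construction. Discard \(P\setminus\bar P\): it contains no jets, and it need not be lower-dimensional, as you yourself note. Then take a \(\mathcal C^p\)-cell decomposition of \(\bar P\). A cell \(Q\) with \(\dim Q=\dim\bar P\) contains a dense subset that is relatively open in \(\bar P\): remove the closures of the other cells, which meet \(Q\) in smaller dimension. So jets are dense in \(Q\). All other cells have dimension \(<\dim P\), and iterating terminates. Neither genericity nor the chain rule is needed for density.

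Separately, you assert that projection compatibility follows once the decomposition of \(E_n\) refines the pullback of a base decomposition. Let \(P'\) be the projection of \(P\) to the first \(k-1\) blocks. Your assertion requires \(\Pi_{k-1}(\Jet_n^{-1}(P))=\Jet_{(n_1,\ldots,n_{k-1})}^{-1}(P')\), i.e.\ that every realized jet in \(P'\) extends to one in \(P\), and this fails. Take \(P=\{(x,x',y,y')\in E_{(1,1)}:y=x',\ 0<y'<1\}\): it is a smooth cell with dense jets over \(P'=M^2\). Yet \(\Pi_1(\Jet_{(1,1)}^{-1}(P))=\{a:0<\delta^2a<1\}\), while a neighbouring cell such as \(\{y<x'\}\) projects onto all of \(M\). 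So the projections do not form a partition.

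The paper's proof disposes of this step just as briefly, and the same example applies to it, so this is not where you diverge from the paper. A complete argument still has to raise the orders of the earlier blocks and use genericity fibrewise.
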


\begin{proof}
Choose \(\mathcal L^\delta(B)\)-formulas \(\varphi_1,\ldots,\varphi_l\) defining \(A_1,\ldots,A_l\), respectively. After increasing orders if necessary, we may assume that there is \(n\in\mathbb N\) such that \(\ord(\varphi_i)\leq(n,\ldots,n)\) for every \(i=1,\ldots,l\). Set \(N=k(n+1)\), and let
\[
\mathcal A^{\mathcal L}:=\{(A_1)_{\varphi_1}^{\mathcal L},\ldots,(A_l)_{\varphi_l}^{\mathcal L}\}.
\]
By the o-minimal \(\mathcal C^p\)-cell decomposition theorem, there is a \(\mathcal C^p\)-cell decomposition \(\mathcal D^{\mathcal L}\) of \(M^N\), definable over \(B\), compatible with \(\mathcal A^{\mathcal L}\).

We refine \(\mathcal D^{\mathcal L}\) so that the realized jet space is dense in every cell which meets it. Let \(D\in\mathcal D^{\mathcal L}\). If \(D\cap\Jet_{(n,\ldots,n)}(M^k)\) is dense in \(D\), there is nothing to do. Otherwise set \(C_D:=\cl(D\cap\Jet_{(n,\ldots,n)}(M^k))\cap D\), where the closure is Euclidean closure in \(M^N\). By Fact~\ref{Open core}, \(C_D\) is \(\mathcal L(B)\)-definable. Replace \(D\) by a \(\mathcal C^p\)-cell decomposition of \(C_D\) and of \(D\setminus C_D\). On cells where density still fails, the o-minimal dimension strictly decreases. Repeating this process, we obtain after finitely many steps a \(\mathcal C^p\)-cell refinement, still denoted \(\mathcal D^{\mathcal L}\), such that for every \(D\in\mathcal D^{\mathcal L}\), if \(D\cap\Jet_{(n,\ldots,n)}(M^k)\neq\varnothing\), then this intersection is dense in \(D\).

Define
\[
\mathcal D:=\{\Pi_{(0,\ldots,0)}(D\cap\Jet_{(n,\ldots,n)}(M^k)):D\in\mathcal D^{\mathcal L},\ D\cap\Jet_{(n,\ldots,n)}(M^k)\neq\varnothing\}.
\]
Since \(\mathcal D^{\mathcal L}\) is a cell decomposition in the inductive sense, its projections to initial coordinate blocks again form cell decompositions. Hence \(\mathcal D\) is a finite \(\delta\)-decomposition of \(M^k\). Compatibility with \(\mathcal A\) follows from compatibility of \(\mathcal D^{\mathcal L}\) with \(\mathcal A^{\mathcal L}\).

Finally, each member of \(\mathcal D\) is defined by pulling back a corresponding cell \(D\in\mathcal D^{\mathcal L}\) along \(\Jet_{(n,\ldots,n)}\). The corresponding source cell at order \((n,\ldots,n)\) is \(D\), which is of class \(\mathcal C^p\), and by construction \(D\cap\Jet_{(n,\ldots,n)}(M^k)\) is dense in \(D\). Hence each member of \(\mathcal D\) is a \(\delta\)-cell in the sense above, with a \(\mathcal C^p\) source cell.
\end{proof}

In the case \(T=\RCF\), the source cells may be chosen Nash by Nash cell decomposition.

\begin{corollary}\label{Refined delta decomposition for CODF}
Let \((\mathcal M,\delta)\models\CODF\), and let \(B\subseteq M\). For every finite collection \(\mathcal A=\{A_1,\ldots,A_l\}\) of \(\mathcal L^\delta(B)\)-definable subsets of \(M^k\), there is a finite \(\delta\)-decomposition \(\mathcal D\) of \(M^k\), definable over \(B\), compatible with \(\mathcal A\), such that every \(D\in\mathcal D\) admits a source formula \(\varphi\) whose source cell \(D_\varphi^{\mathcal L}\) is a Nash cell.
\end{corollary}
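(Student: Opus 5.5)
The plan is to repeat the proof of Theorem~\ref{Refined delta decomposition theorem} verbatim, with every use of o-minimal \(\mathcal C^p\)-cell decomposition replaced by Nash cell decomposition, and to check that nothing else changes. Since \(T=\RCF\) and \(T_g^\delta=\CODF\), every \(\mathcal L(B)\)-definable set is semialgebraic over \(B\). First choose \(\mathcal L^\delta(B)\)-formulas \(\varphi_1,\ldots,\varphi_l\) for \(A_1,\ldots,A_l\), raise all orders to a common \((n,\ldots,n)\), and form the semialgebraic sets \((A_i)^{\mathcal L}_{\varphi_i}\subseteq M^N\) with \(N=k(n+1)\). Then apply Nash cell decomposition \cite{BochnakCosteRoy1998} to get a finite Nash cell decomposition \(\mathcal D^{\mathcal L}\) of \(M^N\) compatible with this family.

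One point needs checking: that this decomposition can be taken definable over \(B\), and is a cell decomposition in the inductive sense. This holds because semialgebraic \(\mathcal C^\infty\)-cell decomposition is proved uniformly by the standard o-minimal induction. Over an arbitrary real closed field \(M\), and with parameters from \(B\), the same proof produces Nash cells, since the relevant functions are semialgebraic and \(\mathcal C^\infty\). If one prefers, one can instead work inside the real closure of \(\mathbb Q(B)\) and transfer, since everything is first-order in \(\RCF\).

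Next comes the density refinement. For a cell \(D\) on which \(D\cap\Jet_{(n,\ldots,n)}(M^k)\) is nonempty but not dense, set
\[
C_D:=\cl\bigl(D\cap\Jet_{(n,\ldots,n)}(M^k)\bigr)\cap D .
\]
By Fact~\ref{Open core}, \(C_D\) is \(\mathcal L(B)\)-definable, hence semialgebraic over \(B\). Re-decompose \(C_D\) and \(D\setminus C_D\) into Nash cells, preserving the inductive structure by decomposing the whole of \(M^N\) compatibly with all previous cells and the new sets. The dimension of the cells on which density fails strictly drops, so the process terminates. One then defines \(\mathcal D\) by projecting \(D\cap\Jet_{(n,\ldots,n)}(M^k)\) exactly as before. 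The remaining arguments go through word for word: that \(\mathcal D\) is a \(\delta\)-decomposition, that it is compatible with \(\mathcal A\), and that each \(D\) is a source cell with dense realized jets.

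The only step needing care is the termination and compatibility bookkeeping in the refinement. The density condition must hold for the final cells, not merely for intermediate ones. One also has to see that re-decomposing does not destroy density on cells already handled. I would address this with two observations. First, a Nash subcell \(D'\subseteq C_D\) that is open in \(C_D\) inherits density. Second, on lower-dimensional pieces one recurses on dimension, so the maximal dimension of a "bad" cell decreases at each round. Since no analytic properties beyond semialgebraicity and smoothness are used, the Nash version follows.
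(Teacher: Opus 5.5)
Your main line is the paper's own. The corollary is justified there only by the remark that, for \(T=\RCF\), the \(\mathcal C^p\)-cell decompositions in the proof of Theorem~\ref{Refined delta decomposition theorem} may be taken to be Nash cell decompositions. Your supporting facts (\(\mathcal L(B)\)-definable sets are semialgebraic over \(B\), and semialgebraic \(\mathcal C^\infty\) cells are Nash cells) are the right ones.

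The gap is in the termination argument you add for the density refinement. Once you re-decompose all of \(M^N\) compatibly with \(C_D\), the new walls also cut cells that were already good. Your first observation only covers subcells of \(C_D\). Lower-dimensional pieces of a previously good cell can be bad, and of larger dimension than \(D\).

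For example, take \(k=1\), \(n=3\), coordinates \((x,y,z,w)\), and let \(B\) contain some \(b\) with \(\delta b=1\) (such \(b\) exists by genericity), so \(\Jet_3(b)=(b,1,0,0)\). Suppose the current decomposition contains, over an open interval \(I\ni b\), the cell \(D=\{x\in I,\ y=0,\ z=0,\ w>x-b\}\) and the open (hence good) cell \(\{x\in I,\ y>0\}\).
\begin{enumerate}
\item The realized jets in \(D\) are the points \((c,0,0,0)\) with \(\delta c=0\) and \(c<b\). Since constants are dense, \(D\) is bad of dimension \(2\), and \(C_D=\{(x,0,0,0):x\in I,\ x<b\}\).
\item Compatibility with \(C_D\) forces \(\{b\}\) to become a base cell.
\item This slices the good open cell and produces a \(3\)-dimensional cell such as \(\{(b,y,z,w):y>0\}\). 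Its only realized jet is \((b,1,0,0)\), so it is bad.
\end{enumerate}
So the maximal dimension of a bad cell rises from \(2\) to \(3\), and the potential you rely on does not decrease.

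The Nash substitution is not the problem; this bookkeeping claim is. You have three options.
\begin{enumerate}
\item Find a different termination measure. In the example the new bad cells lie over newly created lower-dimensional base cells, and repairing them only cuts inside those fibres. This suggests organizing the repair from the base upward.
\item Keep the refinement local, as the paper does, re-decomposing only \(D\). Then every new bad cell is a subcell of \(C_D\) of dimension less than \(\dim C_D\le\dim D\), so the dimension count is valid. The cylindrical compatibility you rightly worried about then becomes a separate point, which the paper's sketch leaves implicit.
\item For the corollary as the paper states it, simply note that the proof of Theorem~\ref{Refined delta decomposition theorem} uses cell decomposition only as a black box: existence over \(B\), compatibility with finitely many definable sets, and regularity of the cells. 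Nash cell decomposition supplies all of these, so there is no need to re-derive the refinement step with an argument that fails.
\end{enumerate}
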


\subsection{Minimal source cells}

The source cell of a \(\delta\)-cell depends on the chosen source formula. Source formulas may contain redundant derivatives, and different choices of source formula may give source cells in different finite jet spaces. In order to construct configurations later, we first isolate source formulas of minimal order.

We order \(\mathbb N^k\) by reverse lexicographic order, denoted \(\prec_{\mathrm{rlex}}\). Thus, for \(m,n\in\mathbb N^k\), we have \(m\prec_{\mathrm{rlex}} n\) if, at the largest index \(i\) such that \(m_i\neq n_i\), one has \(m_i<n_i\).

Let \(C\subseteq M^k\) be a \(\delta\)-cell. A source formula \(\varphi\) for \(C\) is called \emph{minimal} if \(\ord(\varphi)\) is minimal, among all source formulas for \(C\), with respect to \(\prec_{\mathrm{rlex}}\). A source cell arising from a minimal source formula is called a \emph{minimal source cell}.

In one variable, minimal source cells are canonical. Indeed, the following lemma says that once the order is fixed, the source cell is forced.

\begin{lemma}\label{Source cell is unique in one variable}
Let \(C\subseteq M\) be a \(\delta\)-cell, and let \(\varphi,\psi\) be source formulas for \(C\) of order \(n\). Then \(C_\varphi^{\mathcal L}=C_\psi^{\mathcal L}\).
\end{lemma}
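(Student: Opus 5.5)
The plan is to reduce the statement to a purely o-minimal comparison of two cells that contain the same jets. Write \(n=\ord(\varphi)=\ord(\psi)\), \(J:=\Jet_n(M)\subseteq M^{n+1}\), \(D:=C_\varphi^{\mathcal L}\) and \(E:=C_\psi^{\mathcal L}\). Since both formulas define \(C\) and have the same order, we have \(D\cap J=\Jet_n(C)=E\cap J\). By the definition of a source formula, this common set is dense in both \(D\) and \(E\). Consequently \(\cl(D)=\cl(\Jet_n(C))=\cl(E)\). The whole proof then rests on the following claim.

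\emph{Claim.} If \(D,E\subseteq M^{n+1}\) are cells with \(D\cap J=E\cap J\), and this set is dense in both, then \(D=E\).

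The only differential input beyond density will be the genericity axiom, which I use in the following form. If \(S\subseteq M^{n+1}\) is \(\mathcal L(M)\)-definable and \(\dim\Pi_{n-1}(S)=n\), where \(\Pi_{n-1}\) forgets the last coordinate, then \(S\cap J\neq\varnothing\). Apply this to \(S=D\setminus E\) and \(S=E\setminus D\): both miss \(J\), so their projections to \(M^n\) have dimension \(<n\). Thus the two cells can differ only above a lower-dimensional subset of the base space \(M^n\).

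I will prove the Claim by induction along the cell structure. Write \(D\) over its base cell \(D'\subseteq M^n\) as either a graph \(\Gamma(f)\) or a band \((f,g)_{D'}\), and similarly \(E\) over \(E'\) with functions \(h,\) possibly \(h'\).

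\emph{Step 1: the bases coincide, \(D'=E'\).} First, \(\Pi_{n-1}(D\cap J)=\Pi_{n-1}(E\cap J)\subseteq\Jet_{n-1}(M)\), and this set is dense in both \(D'\) and \(E'\). Hence \(\cl(D')=\cl(E')\). By genericity, \(D'\triangle E'\) has dimension \(<n\).

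To get exact equality I would repeat the argument one level down. This requires a strengthened inductive hypothesis stating that cells in \(M^{m+1}\) with the same closure, and differing only on a set whose projection to \(M^m\) is small, are equal. The mechanism for that hypothesis is the following. A cell is the graph or band of continuous functions over a connected base cell. If two such cells agree off a set whose projection is nowhere dense, then their defining functions agree on a dense subset of the base. By continuity the functions agree on the common base, and the cells coincide.

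\emph{Step 2: the fibres agree.} Once \(D'=E'\), the sets \(D\) and \(E\) are both graphs or bands over the same base. They must be of the same type, since \(\dim D=\dim\cl(D)=\dim E\). The functions \(f\) and \(h\) (and \(g\) and \(h'\)) agree on the dense set of points of \(D'\) over which \(D\) and \(E\) agree. By continuity they agree everywhere on \(D'\), so \(D=E\).

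\emph{Main obstacle.} The hard part is Step 1 in the non-open case. There \(\dim D'<n\) can happen, and the genericity axiom in the form above gives nothing, because the projection of any subset of \(D\) already has dimension \(<n\). I would try to handle this by passing to a lower-order description. Every \(a\in C\) has a jet lying in a cell of lower dimension, and in one variable this dimension drop forces a differential relation of order \(<n\). I would then use the fact that genericity makes \(\Jet_{m}(M)\) behave generically inside the graph of such a relation. This is the step where I expect to need the one-variable hypothesis most seriously. If that route fails, the fallback is to argue directly with the frontier. Any point of \(D\setminus E\) lies in \(\cl(E)\setminus E\), which is lower-dimensional. The aim would be to show that such a point lies in the interior of \(D\) relative to \(\cl(D)\), and therefore must be approached by jets in \(D\cap J=E\cap J\) in a way that contradicts the cell shape of \(E\).
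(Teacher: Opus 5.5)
\textbf{There is a genuine gap, and you located it yourself.} The proposal never proves $D'=E'$, nor the fiber comparison in Step 2, when a projection of the source cell is not open.

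This case really occurs for one-variable source formulas of a given order. For example, $\delta x=0\wedge\delta^2x=0$ is a source formula of order $2$ for the set of constants. Its source cell is $\{(x,0,0):x\in M\}$, and the realized jets are dense in it because the constants are dense by genericity. There $\Pi_1(D)$ is a graph, not open in $M^2$.

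In this situation the genericity axiom says nothing, since every subset of $D$ already has projection of dimension $<n$. So your claim that the base points over which $D$ and $E$ agree are dense is unproved. The routes you sketch, via lower-order relations or the frontier, remain speculative. Separately, the step ``by genericity, $\dim(D'\triangle E')<n$'' is not justified as written. The sets $D'\cap\Jet_{n-1}(M)$ and $E'\cap\Jet_{n-1}(M)$ need not coincide; you only know that both contain $\Jet_{n-1}(C)$.

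\textbf{The missing idea is that genericity is not needed at all.} The equality of closures you already derived suffices, provided you use it at every level rather than only at the top.

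First, propagate the equality down. Each $\Pi_j$ is continuous and $\Pi_j(\Jet_n(C))=\Jet_j(C)$. Hence $\Jet_j(C)$ is dense in both $\Pi_j(D)$ and $\Pi_j(E)$, and so $\cl(\Pi_j(D))=\cl(\Pi_j(E))$ for every $j\le n$.

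Then induct on $j$, as the paper does.
\begin{itemize}
\item For $j=0$: points and open intervals in $M$ with the same closure coincide.
\item For the inductive step, suppose $\Pi_{j-1}(D)=\Pi_{j-1}(E)=:B$ and let $x\in B$. The fiber of $\cl(\Gamma(f))$ over $x$ is $\{f(x)\}$. The fiber of the closure of a band $(f_0,f_1)_B$ over $x$ is the closed interval with endpoints $f_0(x),f_1(x)$. Both facts hold because $f,f_0,f_1$ are continuous on $B$, and a point of the closure lying over $x\in B$ is approached by points whose base tends to $x$ inside $B$.
\end{itemize}
So equal closures force the same cell type (a point versus an interval with nonempty interior) and equal defining functions, giving $\Pi_j(D)=\Pi_j(E)$.

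This argument proves your Claim; indeed it shows that any two cells with the same closure are equal. Your ``strengthened inductive hypothesis'' should therefore be just ``equal closures'', which does propagate to projections, unlike the small-difference clause. The non-open case then needs no extra work.
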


\begin{proof}
Since \(\varphi\) and \(\psi\) define the same set \(C\), we have
\[
C_\varphi^{\mathcal L}\cap\Jet_n(M)=C_\psi^{\mathcal L}\cap\Jet_n(M)=\Jet_n(C).
\]
By assumption, \(\Jet_n(C)\) is dense in both \(C_\varphi^{\mathcal L}\) and \(C_\psi^{\mathcal L}\).

For each \(j=0,\ldots,n\), set \(A_j:=\Pi_j(C_\varphi^{\mathcal L})\) and \(B_j:=\Pi_j(C_\psi^{\mathcal L})\). Then \(A_j\) and \(B_j\) are cells in \(M^{j+1}\). Since projections of cells are open maps onto their images, the density of \(\Jet_n(C)\) in the source cells implies that \(\Jet_j(C)\) is dense in both \(A_j\) and \(B_j\). Hence \(\cl(A_j)=\cl(B_j)\) for each \(j=0,\ldots,n\).

We prove by induction on \(j\) that \(A_j=B_j\). For \(j=0\), the sets \(A_0\) and \(B_0\) are points or open intervals in \(M\). Since they have the same closure, they are equal. Suppose \(j>0\) and \(A_{j-1}=B_{j-1}\). The cells \(A_j\) and \(B_j\) are built over the same base. If one were a graph and the other a band, their closures would have different fiber dimensions over points of the base, contradicting \(\cl(A_j)=\cl(B_j)\). Hence they are either both graphs or both bands. In the graph case, equality of closures forces equality of the defining functions. In the band case, equality of closures forces equality of the two boundary functions. Thus \(A_j=B_j\). Taking \(j=n\), we get \(C_\varphi^{\mathcal L}=C_\psi^{\mathcal L}\).
\end{proof}

Thus, if \(C\subseteq M\) is a one-variable \(\delta\)-cell and \(\varphi\) is a minimal source formula for \(C\), then the source cell \(C_\varphi^{\mathcal L}\) is independent of the chosen minimal source formula. We denote it by \(C^{\mathcal L}\), and call it the \emph{minimal source cell} of \(C\).

\section{Configurations}

In this section we study the finite configurations associated to \(\delta\)-cells. The strengthened definition of \(\delta\)-cell gives control over source cells because the realized jet space is dense in the chosen source cell. Configurations record how these source cells project to lower jet levels and how they extend to higher jet levels under the chain rule.

Unless stated otherwise, all topological notions in this section refer to the Euclidean topology. Thus \(\cl(X)\) denotes Euclidean closure, and open means Euclidean open.

\subsection{The chain-rule construction}

Let \(l>0\), let \(k>0\), and let \(n=(n_1,\ldots,n_k)\in\mathbb N^k\). Let \(U\subseteq E_n\) be an \(\mathcal L(M)\)-definable open set, and let \(f:U\to M\) be an \(\mathcal L(M)\)-definable function of class \(\mathcal C^l\). Let \(\mathcal S(U)\subseteq E_{n+1}\) be the inverse image of \(U\) under the natural projection \(E_{n+1}\to E_n\).

We define an \(\mathcal L(M)\)-definable function
\[
\mathcal S(f):\mathcal S(U)\to M
\]
as follows. For \(z\in\mathcal S(U)\), set
\[
\mathcal S(f)(z)
=
f^{[\delta]}(\Pi_n(z))
+
\mathbf J_f(\Pi_n(z))\cdot v(z),
\]
where
\[
v(z):=(x_1^{(1)},\ldots,x_1^{(n_1+1)},\ldots,x_k^{(1)},\ldots,x_k^{(n_k+1)})^t.
\]
Here \(f^{[\delta]}\) is the function given by Fact~\ref{Chain rule for definable functions}. Thus \(\mathcal S(f)\) is obtained by applying the \(T\)-derivation chain rule to \(f\) and replacing each \(\delta(x_i^{(j)})\) by the new variable \(x_i^{(j+1)}\).

\subsection{One variable configurations}

We first study \(\delta\)-cells in one variable. This is the base case for the higher-dimensional construction. Let \(C\subseteq M\) be a one-variable \(\delta\)-cell, and let \(C^{\mathcal L}\) be its minimal source cell when it is defined.

We shall repeatedly use the following elementary consequence of the density condition.

\begin{lemma}\label{Projection of source cell is a source cell}
Let \(l\in\mathbb N\cup\{\infty\}\), let \(C\subseteq M\) be \(\mathcal L^\delta(M)\)-definable, and let \(\varphi\) be an \(\mathcal L^\delta(M)\)-formula such that \(C_\varphi^{\mathcal L}\) is a \(\mathcal C^l\)-cell, \(\ord(\varphi)=m>0\), and \(\Jet_m(C)\) is dense in \(C_\varphi^{\mathcal L}\). Then \(\Pi_{m-1}(C_\varphi^{\mathcal L})\) is a \(\mathcal C^l\)-cell, and \(\Jet_{m-1}(C)\) is dense in \(\Pi_{m-1}(C_\varphi^{\mathcal L})\).
\end{lemma}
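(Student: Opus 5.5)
The plan is to prove the two assertions separately, in both cases using only the inductive structure of o-minimal cells and the fact that coordinate projections of cells are open maps onto their images. Throughout, \(m=\ord(\varphi)>0\) and \(C_\varphi^{\mathcal L}\subseteq E_m=M^{m+1}\). The map \(\Pi_{m-1}\) forgets the last coordinate \(x^{(m)}\).

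\emph{Cell structure.} By the inductive definition of a cell in \(M^{m+1}\), there is a cell \(D\subseteq M^m\) for which one of two things holds. Either \(C_\varphi^{\mathcal L}\) is the graph of a continuous definable function \(g:D\to M\). Or it is a band \(\{(y,t):y\in D,\ g_1(y)<t<g_2(y)\}\), with \(g_1<g_2\) continuous on \(D\) (possibly \(\pm\infty\)). In both cases every fiber over \(y\in D\) is nonempty, so \(\Pi_{m-1}(C_\varphi^{\mathcal L})=D\), which is a cell. For smoothness, I will use that a \(\mathcal C^l\)-cell is by definition built from \(\mathcal C^l\) data at every stage of the induction. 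So \(D\), being the base cell in that inductive construction, is itself a \(\mathcal C^l\)-cell. If the paper's convention defines \(\mathcal C^l\)-cell instead as a cell that is a \(\mathcal C^l\) submanifold, I would add one remark: \(D\) is the image of \(C_\varphi^{\mathcal L}\) under a projection that restricts to a \(\mathcal C^l\) diffeomorphism in the graph case (with inverse \(y\mapsto(y,g(y))\)), and in the band case \(D\) is diffeomorphic to the graph of \((g_1+g_2)/2\) or another \(\mathcal C^l\) section.

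\emph{Density.} Since \(\Pi_{m-1}\circ\Jet_m=\Jet_{m-1}\), we have \(\Jet_{m-1}(C)=\Pi_{m-1}(\Jet_m(C))\subseteq D\). Take \(y\in D\) and an open box \(V\ni y\). Pick any point \(z\in C_\varphi^{\mathcal L}\) over \(y\), namely \((y,g(y))\) or \((y,t)\) with \(g_1(y)<t<g_2(y)\). Then \(\Pi_{m-1}^{-1}(V)\cap C_\varphi^{\mathcal L}\) is a relatively open neighbourhood of \(z\) in \(C_\varphi^{\mathcal L}\). By density of \(\Jet_m(C)\), it contains some \(\Jet_m(c)\) with \(c\in C\), and then \(\Jet_{m-1}(c)\in V\cap D\). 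This is just the statement that a continuous surjection carries dense subsets to dense subsets, which only needs continuity of \(\Pi_{m-1}\) and surjectivity onto \(D\).

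The only point needing care is the smoothness class of the projected cell, specifically matching the paper's convention for \(\mathcal C^l\)-cells. The main obstacle is fixing that convention, and I expect it to be routine once fixed. The density claim is formal.
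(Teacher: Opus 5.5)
Your proposal is correct and is essentially the paper's proof: the cell assertion is read off from the inductive definition of \(\mathcal C^l\)-cells, and density holds because \(\Pi_{m-1}\) is continuous and maps \(\Jet_m(C)\) onto \(\Jet_{m-1}(C)\), which the paper phrases as a contradiction argument using the nonempty relatively open set \((U\times M)\cap C_\varphi^{\mathcal L}\). The paper uses the inductive convention, so your fallback remark about the submanifold convention is not needed, and as written it would be circular in the graph case, since it presupposes that \(g\) is \(\mathcal C^l\) on a \(\mathcal C^l\) base.
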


\begin{proof}
By the inductive definition of cells, \(\Pi_{m-1}(C_\varphi^{\mathcal L})\) is a \(\mathcal C^l\)-cell. It remains to prove density. Suppose not. Then there is a nonempty open set \(U\subseteq M^m\) such that \(U\cap\Pi_{m-1}(C_\varphi^{\mathcal L})\neq\varnothing\) and \(U\cap\Jet_{m-1}(C)=\varnothing\). Hence \((U\times M)\cap C_\varphi^{\mathcal L}\) is a nonempty open subset of \(C_\varphi^{\mathcal L}\). Since \(\Jet_m(C)\) is dense in \(C_\varphi^{\mathcal L}\), there is \(a\in C\) such that \(\Jet_m(a)\in (U\times M)\cap C_\varphi^{\mathcal L}\). Projecting to the first \(m\) coordinates gives \(\Jet_{m-1}(a)\in U\cap\Jet_{m-1}(C)\), a contradiction.
\end{proof}

We next record the basic going-up principle. It says that once a jet-level set is given as the graph of a smooth definable function, the next jet coordinate is forced by the \(T\)-derivation chain rule.

\begin{lemma}\label{Going up property}
Fix \(l>0\), let \(A\subseteq M\), and let \(Y\subseteq M^n\). Let \(f:Y\to M\) be an \(\mathcal L(A)\)-definable function of class \(\mathcal C^l\), and set \(X=\Gamma(f)\subseteq M^{n+1}\). If \(\Jet_n(M)\cap X\) is dense in \(X\), then there exists a unique \(\mathcal L(A)\)-definable function \(g:X\to M\) of class \(\mathcal C^{l-1}\) such that \(\Jet_{n+1}(\Jet_n^{-1}(X))\) is dense in \(\Gamma(g)\).
\end{lemma}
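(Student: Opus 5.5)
Since \(X=\Gamma(f)\) is a graph, a point of \(X\) is \((y,f(y))\) with \(y\in Y\), and every \(a\) with \(\Jet_n(a)\in X\) satisfies \(\delta^n a=f(\Jet_{n-1}(a))\). The plan is to differentiate this identity with the \(T\)-derivation chain rule, which determines \(\delta^{n+1}a\) as a definable function of \(\Jet_n(a)\). That function will be \(g\); uniqueness will then follow from density together with the graph structure.

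\emph{Existence.} First we must ensure that \(f\) is defined on an open set, so that Fact~\ref{Chain rule for definable functions} applies. Since the realized jets \(\Jet_{n-1}(M)\) are dense in \(M^n\) (Fact~\ref{Jet space is dense}) and \(\Jet_n(M)\cap X\) is dense in \(X\), the projection \(Y\) contains \(\Jet_{n-1}\)-points densely. If \(Y\) is not open, we would like \(f\) to extend to an \(\mathcal L(A)\)-definable \(\mathcal C^l\) function \(\tilde f\) on an open neighbourhood \(U\) of \(Y\), perhaps after passing to a \(\mathcal C^l\) cell structure. The more natural route, however, is to note that in the intended setting \(Y\) is itself an open cell. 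Indeed, if \(Y\) were a cell of dimension \(<n\), then \(\Jet_{n-1}(M)\cap Y\) would be dense in a lower-dimensional cell, and genericity would fail to give a graph of this form; I would still handle this case by the extension argument.

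On the open domain we then set
\[
g(x):=\mathcal S(f)(x)=f^{[\delta]}(x_{\le n-1})+\mathbf J_f(x_{\le n-1})\cdot(x^{(1)},\ldots,x^{(n)})^t ,
\]
which is the chain-rule construction applied to \(f\), viewed on \(X\cong Y\) via the first \(n\) coordinates. By Fact~\ref{Chain rule for definable functions}, \(f^{[\delta]}\) is \(\mathcal L(A)\)-definable of class \(\mathcal C^{l-1}\), so \(g\) is \(\mathcal L(A)\)-definable and \(\mathcal C^{l-1}\) on \(X\). For \(a\) with \(\Jet_n(a)\in X\), applying \(\delta\) to \(\delta^na=f(\Jet_{n-1}(a))\) gives \(\delta^{n+1}a=g(\Jet_n(a))\). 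Hence \(\Jet_{n+1}(\Jet_n^{-1}(X))\subseteq\Gamma(g)\). Density follows because the projection \(\Gamma(g)\to X\) is a homeomorphism (\(g\) is continuous) and \(\Jet_n(\Jet_n^{-1}(X))=\Jet_n(M)\cap X\) is dense in \(X\).

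\emph{Uniqueness.} Let \(g'\) be another such function. Both \(\Gamma(g)\) and \(\Gamma(g')\) contain the dense set \(\Jet_{n+1}(\Jet_n^{-1}(X))\), since each of its points lies in both graphs. Hence \(g=g'\) on the dense subset \(\Jet_n(M)\cap X\) of \(X\). Both functions are continuous (as \(l-1\ge0\)), so \(g=g'\).

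The main obstacle is the domain issue in the existence step: Fact~\ref{Chain rule for definable functions} requires \(f\) to be \(\mathcal C^l\) on an open set, whereas \(Y\) is an arbitrary definable set. I would first try to show that \(Y\) must be open, using density of \(\Jet_{n-1}(M)\cap Y\) and the graph structure. If that fails, I would invoke an o-minimal \(\mathcal C^l\) extension, replacing \(f\) by a definable \(\mathcal C^l\) function on an open neighbourhood and checking that \(\mathcal S(\tilde f)\) restricted to \(X\) does not depend on the chosen extension. Independence of the extension follows from the uniqueness argument above.
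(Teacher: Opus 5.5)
Your argument is essentially the paper's proof. Extend \(f\) to an \(\mathcal L(A)\)-definable \(\mathcal C^l\) function \(F\) on an open \(U\supseteq Y\) (the paper simply takes this as part of the hypothesis that \(f\) is \(\mathcal C^l\)), put \(g:=\mathcal S(F)|_X\) via the chain rule, get density from the continuity of \(g\) and the density of \(\Jet_n(M)\cap X\) in \(X\), and get uniqueness from continuity, or equivalently from the closedness of graphs in \(X\times M\). The one thing to drop is your first route, because \(Y\) need not be open: \(Y=M\times\{0\}\subseteq M^2\) with \(f=0\) satisfies the hypothesis since \(\ker\delta\) is dense in \(M\), and in Corollary~\ref{Iterated going up} the lemma is reapplied with \(Y=\Gamma(g_{j-2})\) a graph, so you must rely only on the open extension, exactly as the paper does.
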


\begin{proof}
By assumption, there exists an \(\mathcal L(A)\)-definable open set \(U\supseteq Y\) and an \(\mathcal L(A)\)-definable \(\mathcal C^l\)-function \(F:U\to M\) such that \(F|_Y=f\). Consider the \(\mathcal L(A)\)-definable \(\mathcal C^{l-1}\)-function \(\mathcal S(F)\) on \(\mathcal S(U)\), and set \(g:=\mathcal S(F)|_X\). Then \(g:X\to M\) is \(\mathcal L(A)\)-definable of class \(\mathcal C^{l-1}\). If \((a,\delta a,\ldots,\delta^n a)\in X\), then by the definition of \(\mathcal S\), we have \(g(a,\delta a,\ldots,\delta^n a)=\delta(f(a,\delta a,\ldots,\delta^{n-1}a))=\delta^{n+1}a\). Hence \(\Jet_{n+1}(\Jet_n^{-1}(X))\subseteq \Gamma(g)\).

We show density. Let \(V\subseteq M^{n+2}\) be open and suppose \(V\cap\Gamma(g)\neq\varnothing\). Choose \(p\in X\) such that \((p,g(p))\in V\). Since \(g\) is continuous, there is an open neighbourhood \(W\) of \(p\) in \(X\) such that \(\{(q,g(q)):q\in W\}\subseteq V\). Since \(\Jet_n(M)\cap X\) is dense in \(X\), choose \(a\in M\) such that \(\Jet_n(a)\in W\). Then \((\Jet_n(a),g(\Jet_n(a)))=(a,\delta a,\ldots,\delta^{n+1}a)\) belongs to \(V\cap\Jet_{n+1}(\Jet_n^{-1}(X))\). Thus \(\Jet_{n+1}(\Jet_n^{-1}(X))\) is dense in \(\Gamma(g)\).

It remains to show uniqueness. Suppose that \(h:X\to M\) is another \(\mathcal L(A)\)-definable \(\mathcal C^{l-1}\)-function such that \(\Jet_{n+1}(\Jet_n^{-1}(X))\) is dense in \(\Gamma(h)\). Since the same set is contained in both \(\Gamma(g)\) and \(\Gamma(h)\), and both graphs are closed in \(X\times M\), we have \(\Gamma(g)=\Gamma(h)\). Hence \(g=h\).
\end{proof}

\begin{remark}
By uniqueness, we may write \(\mathcal S(f):=g\), where \(g\) is the function obtained in Lemma~\ref{Going up property}. If the domain of \(f\) is open, this agrees with the earlier definition of \(\mathcal S(f)\).
\end{remark}

The previous lemma can be iterated. Starting from one graph layer, the higher graph layers are obtained by repeatedly applying the operator \(\mathcal S\).

\begin{corollary}\label{Iterated going up}
Fix \(l>0\) and let \(A\subseteq M\). Let \(f:Y\to M\) be an \(\mathcal L(A)\)-definable function of class \(\mathcal C^l\), where \(Y\subseteq M^n\), and set \(X=\Gamma(f)\). If \(\Jet_n(M)\cap X\) is dense in \(X\), then there exists a unique sequence of \(\mathcal L(A)\)-definable functions
\[
g_j:\Gamma(g_{j-1})\to M
\qquad (j=1,\ldots,l),
\]
with \(g_0=f\), such that, for each \(j=1,\ldots,l\), the function \(g_j\) is of class \(\mathcal C^{l-j}\) and
\[
\Jet_{n+j}\bigl(\Jet_{n+j-1}^{-1}(\Gamma(g_{j-1}))\bigr)
\]
is dense in \(\Gamma(g_j)\).
\end{corollary}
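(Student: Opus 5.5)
The plan is induction on \(j\), applying Lemma~\ref{Going up property} once at each step. The induction hypothesis at stage \(j-1\) is that \(g_0=f,\ldots,g_{j-1}\) have been constructed and are unique, that \(g_{j-1}\) is \(\mathcal L(A)\)-definable of class \(\mathcal C^{l-j+1}\), and that \(\Jet_{n+j-1}(M)\cap\Gamma(g_{j-1})\) is dense in \(\Gamma(g_{j-1})\). The domain of \(g_{j-1}\) is \(\Gamma(g_{j-2})\subseteq M^{n+j-1}\).

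At stage \(j=0\) the hypotheses hold by assumption. For the inductive step with \(1\le j\le l\), we have \(l-j+1\ge 1>0\), so Lemma~\ref{Going up property} applies to \(g_{j-1}\) with smoothness index \(l-j+1\), with \(n+j-1\) in place of \(n\), and with \(X=\Gamma(g_{j-1})\). It gives a unique \(\mathcal L(A)\)-definable \(g_j:\Gamma(g_{j-1})\to M\) of class \(\mathcal C^{l-j}\) such that \(\Jet_{n+j}(\Jet_{n+j-1}^{-1}(\Gamma(g_{j-1})))\) is dense in \(\Gamma(g_j)\). This is exactly the required property at stage \(j\).

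To continue the induction we must check that \(\Jet_{n+j}(M)\cap\Gamma(g_j)\) is dense in \(\Gamma(g_j)\). This follows because \(\Jet_{n+j}(\Jet_{n+j-1}^{-1}(\Gamma(g_{j-1})))\subseteq\Jet_{n+j}(M)\cap\Gamma(g_j)\), and by the previous step the smaller set is already dense in \(\Gamma(g_j)\).

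Uniqueness of the whole sequence also goes by induction. Suppose \((h_j)\) is another sequence with the same properties. Then \(h_0=f=g_0\). If \(h_{j-1}=g_{j-1}\), then \(h_j\) and \(g_j\) satisfy the same conditions as in Lemma~\ref{Going up property} for the same input \(g_{j-1}\), so its uniqueness clause gives \(h_j=g_j\).

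The main obstacle is a hidden hypothesis of Lemma~\ref{Going up property}. Its proof uses that \(f\) extends to an \(\mathcal L(A)\)-definable \(\mathcal C^l\) function on an open neighbourhood of its domain. The domain of \(g_{j-1}\) is the graph \(\Gamma(g_{j-2})\), which is not open. However, \(g_{j-1}\) was built as the restriction \(\mathcal S(F_{j-1})|_{\Gamma(g_{j-2})}\) of a function defined on the open set \(\mathcal S(U_{j-1})\), so \(F_j:=\mathcal S(F_{j-1})\) is a definable \(\mathcal C^{l-j+1}\) extension of \(g_{j-1}\) to an open set. I would therefore add to the induction hypothesis that \(g_{j-1}\) is the restriction of such an open extension, namely \(\mathcal S^{j}(F)\) restricted to \(\Gamma(g_{j-2})\). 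This keeps the hypothesis of the lemma available at every stage.
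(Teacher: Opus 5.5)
Your proof is correct and takes the same route as the paper, whose proof is just ``apply Lemma~\ref{Going up property} repeatedly''; your induction spells this out, using $\Jet_{n+j}(\Jet_{n+j-1}^{-1}(\Gamma(g_{j-1})))\subseteq \Jet_{n+j}(M)\cap\Gamma(g_j)$ to propagate the density hypothesis and the lemma's uniqueness clause to get uniqueness stage by stage. Carrying along the open $\mathcal C^{l-j+1}$ extension is a sound clarification of a hypothesis the lemma's proof uses without stating it. One small index slip: if $F$ extends $g_0=f$, then the open extension of $g_{j-1}$ is $\mathcal S^{j-1}(F)$, not $\mathcal S^{j}(F)$.
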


\begin{proof}
Existence and uniqueness follow by applying Lemma~\ref{Going up property} repeatedly.
\end{proof}

We also need the following uniqueness consequence. It says that if a higher graph layer has the correct dense set of realized jets, then its defining function is forced.

\begin{corollary}\label{Going up graph uniqueness}
Fix \(l>0\) and let \(A\subseteq M\). Let \(f:Y\to M\) be an \(\mathcal L(A)\)-definable function of class \(\mathcal C^l\), where \(Y\subseteq M^n\), and set \(X=\Gamma(f)\). Suppose that \(\Jet_n(M)\cap X\) is dense in \(X\). Let \(h:X\to M\) be an \(\mathcal L(A)\)-definable function of class \(\mathcal C^{l-1}\). If \(\Jet_{n+1}(\Jet_n^{-1}(X))\) is dense in \(\Gamma(h)\), then \(h=\mathcal S(f)\).
\end{corollary}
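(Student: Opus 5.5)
This is essentially the uniqueness clause of Lemma~\ref{Going up property}. The plan is to check that its hypotheses hold, identify the function it produces with \(\mathcal S(f)\) via the Remark, and then compare graphs. First I would invoke Lemma~\ref{Going up property} with the given \(f\), \(Y\), \(X=\Gamma(f)\) and parameter set \(A\). Since \(\Jet_n(M)\cap X\) is dense in \(X\), it yields a unique \(\mathcal L(A)\)-definable \(\mathcal C^{l-1}\)-function \(g:X\to M\) with \(\Jet_{n+1}(\Jet_n^{-1}(X))\) dense in \(\Gamma(g)\). By the Remark following that lemma, this \(g\) is what is denoted \(\mathcal S(f)\).

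Next I would show \(h=g\) directly, rather than just quoting uniqueness, so that the argument is transparent. Write \(J:=\Jet_{n+1}(\Jet_n^{-1}(X))\). By hypothesis \(J\) is dense in \(\Gamma(h)\), so in particular \(J\subseteq\Gamma(h)\); similarly \(J\subseteq\Gamma(g)\). Thus \(h\) and \(g\) agree at every point of \(\Jet_n(M)\cap X\). Indeed, every such point is \(\Jet_n(a)\) with \(a\in\Jet_n^{-1}(X)\), and both \(g\) and \(h\) send it to \(\delta^{n+1}a\).

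Now \(\Jet_n(M)\cap X\) is dense in \(X\), and \(g,h\) are continuous on \(X\) since \(l-1\geq 0\). Hence \(\{p\in X: g(p)=h(p)\}\) is a closed subset of \(X\) containing a dense subset, so it equals \(X\). This gives \(h=g=\mathcal S(f)\).

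The only subtle point is that density of \(J\) in \(\Gamma(h)\) is being read as ``\(J\) is a subset of \(\Gamma(h)\) that is dense in it.'' This is how the paper uses the phrase in Lemma~\ref{Going up property}. If density were meant only in the weaker sense \(J\cap\Gamma(h)\) dense in \(\Gamma(h)\), I would argue as follows instead. The graph of a continuous function on \(X\) is closed in \(X\times M\), and projecting \(\Gamma(h)\) to \(X\) is a homeomorphism. So the set of \(p\in X\) with \((p,h(p))\in J\) is dense in \(X\), and at those points \(h(p)=\delta^{n+1}a=g(p)\). The same closedness argument then concludes.
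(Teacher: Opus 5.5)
Your proof is correct and follows essentially the paper's argument. The paper invokes Lemma~\ref{Going up property} to get that \(\Jet_{n+1}(\Jet_n^{-1}(X))\) is dense in \(\Gamma(\mathcal S(f))\). Since \(\Gamma(h)\) and \(\Gamma(\mathcal S(f))\) are both closed in \(X\times M\), each equals the closure of that set, so they coincide. You instead compare \(h\) and \(\mathcal S(f)\) on the dense set \(\Jet_n(M)\cap X\subseteq X\) via the equalizer. This is the same continuity-and-density argument moved along the projection \(\Gamma(h)\to X\), and it shows that only the containment \(\Jet_{n+1}(\Jet_n^{-1}(X))\subseteq\Gamma(h)\) is actually needed.
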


\begin{proof}
By Lemma~\ref{Going up property}, \(\Jet_{n+1}(\Jet_n^{-1}(X))\) is dense in \(\Gamma(\mathcal S(f))\). By assumption, the same set is dense in \(\Gamma(h)\). Since both \(\Gamma(h)\) and \(\Gamma(\mathcal S(f))\) are graphs of continuous functions over \(X\), they are closed in \(X\times M\). Hence their closures in \(X\times M\) are equal, and so \(\Gamma(h)=\Gamma(\mathcal S(f))\). Therefore \(h=\mathcal S(f)\).
\end{proof}

This uniqueness is the mechanism which detects redundant derivative coordinates. If a source cell becomes a graph before its last coordinate, then all later graph layers are forced by the chain rule. In that case the final derivative coordinate contributes no new information.

\begin{lemma}\label{Graph layer lowers order}
Let \(C\subseteq M\) be a \(\delta\)-cell, and let \(\varphi\) be a source formula for \(C\) of order \(m>0\). Suppose that, for some \(0\leq j<m\), the cell \(\Pi_j(C_\varphi^{\mathcal L})\) is the graph of an \(\mathcal L(M)\)-definable function over \(\Pi_{j-1}(C_\varphi^{\mathcal L})\), where for \(j=0\) this means that \(\Pi_0(C_\varphi^{\mathcal L})\) is a point. Then \(C\) has a source formula of order at most \(m-1\).
\end{lemma}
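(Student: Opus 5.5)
The idea is to show that once a graph layer appears at level \(j<m\), every later layer up to level \(m\) is forced by the chain rule. The top coordinate \(x^{(m)}\) is then redundant, and deleting it leaves a source formula of order \(m-1\).

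Write \(X_i:=\Pi_i(C_\varphi^{\mathcal L})\) for \(i=0,\ldots,m\), so that \(X_m=C_\varphi^{\mathcal L}\). By Lemma~\ref{Projection of source cell is a source cell}, applied repeatedly, each \(X_i\) is a cell and \(\Jet_i(C)\) is dense in \(X_i\). By hypothesis \(X_j=\Gamma(f)\) for a definable \(f\) over \(X_{j-1}\). After shrinking to a finite union of subcells or invoking a \(\mathcal C^p\) source cell as in Theorem~\ref{Refined delta decomposition theorem}, we may take \(f\) to be \(\mathcal C^l\) with \(l\ge m-j\). When \(j=0\), \(X_0=\{c\}\) is a point, \(C\) is a point, and the case is trivial, or it can be handled by treating \(f\) as constant.

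The key step is to show that each \(X_{i+1}\), for \(j\le i<m\), is the graph of \(\mathcal S^{\,i+1-j}(f)\) over \(X_i\). We argue by induction on \(i\).

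Assume \(X_i=\Gamma(g_{i-j})\), where \(g_0=f\) and \(g_{t}=\mathcal S(g_{t-1})\) as in Corollary~\ref{Iterated going up}. For every \(a\in C\) the jet \(\Jet_i(a)\) lies in \(X_i\), so the chain rule computation in Lemma~\ref{Going up property} gives \(\delta^{i+1}a=\mathcal S(g_{i-j})(\Jet_i(a))\). Hence \(\Jet_{i+1}(C)\subseteq\Gamma(g_{i+1-j})\). Since \(\Jet_{i+1}(C)\) is dense in \(X_{i+1}\), the cell \(X_{i+1}\) is contained in the closure of \(\Gamma(g_{i+1-j})\).

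The cell \(X_{i+1}\) lies over \(X_i\) and is either a graph or a band. A band over \(X_i\) has dimension \(\dim X_i+1\), while the closure of a graph over \(X_i\) has dimension \(\dim X_i\). So \(X_{i+1}\) is a graph \(\Gamma(h)\), and \(h\) is continuous on the cell \(X_i\). Both \(\Gamma(h)\) and \(\Gamma(g_{i+1-j})\) are closed in \(X_i\times M\) and contain the common dense set \(\Jet_{i+1}(C)\), which is dense in \(X_{i+1}=\Gamma(h)\). It follows that \(h=g_{i+1-j}\), exactly as in Corollary~\ref{Going up graph uniqueness}.

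Now let \(\psi\) be the \(\mathcal L(M)\)-formula defining \(X_{m-1}\), with each coordinate \(x^{(t)}\) replaced by \(\delta^t x\), and let \(\psi\) have order at most \(m-1\). The formula \(\psi\) defines \(C\), by the following argument. Clearly \(C\subseteq\Jet_{m-1}^{-1}(X_{m-1})\). Conversely, suppose \(\Jet_{m-1}(a)\in X_{m-1}\). Iterating the chain rule identity from the level-\(j\) graph upward gives \(\Jet_m(a)\in\Gamma(g_{m-j})=X_m\), so \(a\in C\). The set \(X_{m-1}\) is a cell and \(\Jet_{m-1}(C)\) is dense in it, so \(\psi\) is a source formula of order at most \(m-1\). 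If the order of \(\psi\) drops further, it is still at most \(m-1\), and one can pass to the corresponding projection via Lemma~\ref{Projection of source cell is a source cell}.

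The main obstacle is the converse inclusion in the last step. Applying the chain rule at level \(j\) needs \(\Jet_j(a)\in\Gamma(f)\) together with \(\Jet_{j-1}(a)\in X_{j-1}\), and both hold because they are projections of \(\Jet_{m-1}(a)\in X_{m-1}\). The delicate point is that \(\mathcal S\) for a non-open domain is only defined through an extension \(F\) of \(f\) to an open set, and the argument depends on the identity \(\delta(F(\Jet_{j-1}a))=\mathcal S(F)(\Jet_j a)\) holding at every realized jet, not merely on a dense set. This identity does hold by Fact~\ref{Chain rule for definable functions}, since \(\Jet_{j-1}(a)\) lies in the domain of \(F\). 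The smoothness needed to iterate \(m-j\) times is the other point requiring care.
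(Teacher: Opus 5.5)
Your argument is correct and is essentially the paper's proof. You induct upward from level \(j\) and show that each \(\Pi_{i+1}(C_\varphi^{\mathcal L})\) is forced to be the graph of \(\mathcal S\) applied to the previous layer; to rule out a band you use a dimension count on the closure, where the paper says a graph cannot be dense in a band. You then take the formula defining \(\Pi_{m-1}(C_\varphi^{\mathcal L})\), and your explicit check that \(\Jet_{m-1}^{-1}(\Pi_{m-1}(C_\varphi^{\mathcal L}))\subseteq C\) via the pointwise chain rule (the first half of Lemma~\ref{Going up property}) makes precise a step the paper only asserts.

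Drop your proposed reduction to \(\mathcal C^{m-j}\) smoothness by shrinking or re-choosing the source cell. It is not available: a source cell must be a single cell, and by Lemma~\ref{Source cell is unique in one variable} the order-\(m\) source cell is uniquely determined by \(C\). Like the paper, whose proof also iterates Lemma~\ref{Going up property}, you should simply take sufficient smoothness of \(C_\varphi^{\mathcal L}\) as a standing hypothesis.
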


\begin{proof}
Let \(Y_i:=\Pi_i(C_\varphi^{\mathcal L})\) for \(i=0,\ldots,m\). By assumption, \(Y_j\) is a graph over \(Y_{j-1}\), with the convention that \(Y_{-1}\) is a point when \(j=0\). Since \(\Jet_m(C)\) is dense in \(Y_m\), repeated application of Lemma~\ref{Projection of source cell is a source cell} shows that \(\Jet_i(C)\) is dense in \(Y_i\) for every \(i=0,\ldots,m\).

We claim that every layer after \(Y_j\) is also a graph. Suppose \(Y_i=\Gamma(f_i)\) for some \(j\leq i<m\). Since \(\Jet_i(C)\) is dense in \(Y_i\), Lemma~\ref{Going up property} gives a graph \(\Gamma(\mathcal S(f_i))\) in which \(\Jet_{i+1}(C)\) is dense. But \(\Jet_{i+1}(C)\) is also dense in \(Y_{i+1}\). Hence \(Y_{i+1}\) cannot be a band over \(Y_i\), since a graph cannot be dense in a band. Therefore \(Y_{i+1}\) is a graph, and by Corollary~\ref{Going up graph uniqueness} it is precisely \(\Gamma(\mathcal S(f_i))\).

By induction, every layer \(Y_i\) for \(j\leq i\leq m\) is a graph, and each defining function is obtained from the previous one by applying \(\mathcal S\). In particular, the last coordinate of \(Y_m\) is determined by \(Y_{m-1}\) on realized jets. Hence membership in \(C\) is already determined by \(\Jet_{m-1}(x)\).

Let \(\theta(x^{(0)},\ldots,x^{(m-1)})\) be an \(\mathcal L(M)\)-formula defining \(Y_{m-1}\), and let \(\theta^*(x)\) be the corresponding \(\mathcal L^\delta(M)\)-formula obtained by substituting \((x,\delta x,\ldots,\delta^{m-1}x)\). Then \(\theta^*\) defines \(C\). Moreover, \(Y_{m-1}\) is a cell, and \(\Jet_{m-1}(C)\) is dense in \(Y_{m-1}\). Hence \(\theta^*\) is a source formula for \(C\) of order at most \(m-1\).
\end{proof}

We now apply the preceding reduction to source formulas of least possible order.

\begin{proposition}\label{One variable source cell shape}
Let \(C\subseteq M\) be a \(\delta\)-cell, and let \(\varphi\) be a minimal source formula for \(C\) of order \(m\), with \(C_\varphi^{\mathcal L}\) of class \(\mathcal C^l\). If \(m=0\), then \(C\) is either a point or an open interval. If \(m>0\), then the source cell \(C_\varphi^{\mathcal L}\) is either an open \(\mathcal C^l\)-cell in \(M^{m+1}\), or the graph of a \(\mathcal C^l\) \(\mathcal L(M)\)-definable function \(f:\Pi_{m-1}(C_\varphi^{\mathcal L})\to M\), where \(\Pi_{m-1}(C_\varphi^{\mathcal L})\) is an open \(\mathcal C^l\)-cell in \(M^m\).
\end{proposition}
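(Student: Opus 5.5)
The plan is to combine minimality with Lemma~\ref{Graph layer lowers order}, applied layer by layer to the projections
\[
Y_i:=\Pi_i(C_\varphi^{\mathcal L}),\qquad i=0,\ldots,m.
\]
By Lemma~\ref{Projection of source cell is a source cell}, applied repeatedly, each \(Y_i\) is a \(\mathcal C^l\)-cell in which \(\Jet_i(C)\) is dense.

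The case \(m=0\) is immediate. Here \(\varphi\) is an \(\mathcal L(M)\)-formula in \(x\) alone, so \(C=C_\varphi^{\mathcal L}\) is an o-minimal cell in \(M\). Such a cell is a point or an open interval.

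Now let \(m>0\). First I show that no layer \(Y_j\) with \(0\leq j<m\) can be a graph over \(Y_{j-1}\); for \(j=0\) this means \(Y_0\) is not a point. If some such \(Y_j\) were a graph, Lemma~\ref{Graph layer lowers order} would give a source formula for \(C\) of order at most \(m-1\). In one variable, \(m-1<m\) in \(\prec_{\mathrm{rlex}}\), which contradicts the minimality of \(\varphi\). So \(Y_0\) is an open interval, and each \(Y_j\) for \(1\leq j\leq m-1\) is a band over \(Y_{j-1}\). By the inductive definition of cells, a band over an open cell is open. Induction on \(j\) therefore shows that \(Y_{m-1}=\Pi_{m-1}(C_\varphi^{\mathcal L})\) is an open \(\mathcal C^l\)-cell in \(M^m\).

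The last layer \(Y_m=C_\varphi^{\mathcal L}\) is a cell over the open cell \(Y_{m-1}\), so it is either a band or a graph. If it is a band, it is an open \(\mathcal C^l\)-cell in \(M^{m+1}\). If it is a graph, it is \(\Gamma(f)\) for some \(f:Y_{m-1}\to M\). Here \(f\) is \(\mathcal L(M)\)-definable because \(C_\varphi^{\mathcal L}\) is, and \(f\) is \(\mathcal C^l\) because the cell is a \(\mathcal C^l\)-cell. This gives exactly the dichotomy in the statement.

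The delicate point is that Lemma~\ref{Graph layer lowers order} only applies to \(j<m\). So a graph at the top layer is allowed, and it is exactly the second alternative of the proposition. I would also check that the lemma's hypotheses are met: it needs \(\Jet_i(C)\) dense in every \(Y_i\), which Lemma~\ref{Projection of source cell is a source cell} supplies. It also uses Lemma~\ref{Going up property}, which needs enough smoothness (\(l>0\)). If \(l=0\) causes trouble, I would note that the lemma as stated in the paper places no smoothness restriction on the source cell, and invoke it as given.
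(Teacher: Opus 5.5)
Your proposal is correct and follows the paper's proof essentially step for step. You use Lemma~\ref{Graph layer lowers order} together with minimality to rule out a point at level $0$ and any graph layer below level $m$, conclude that $\Pi_{m-1}(C_\varphi^{\mathcal L})$ is an open cell, and then read off the band/graph dichotomy at the top layer. Your caveat about $l=0$ is fair, and it applies equally to the paper's proof: there too Lemma~\ref{Graph layer lowers order} is invoked without comment, even though it relies on Lemma~\ref{Going up property}, which is stated only for $l>0$.
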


\begin{proof}
The case \(m=0\) is immediate, since a cell in \(M\) is either a point or an open interval. Suppose \(m>0\). For \(i=0,\ldots,m\), set \(Y_i:=\Pi_i(C_\varphi^{\mathcal L})\), so that \(Y_m=C_\varphi^{\mathcal L}\). By the inductive definition of o-minimal cells, \(Y_0\) is either a point or an open interval, and for each \(i=1,\ldots,m\), the cell \(Y_i\) is either a graph over \(Y_{i-1}\) or a band between two continuous definable functions over \(Y_{i-1}\).

We claim that \(Y_0\) is not a point and that \(Y_i\) is not a graph over \(Y_{i-1}\) for any \(1\leq i<m\). If either occurred, Lemma~\ref{Graph layer lowers order} would produce a source formula for \(C\) of order at most \(m-1\), contradicting the minimality of \(\varphi\). Hence \(Y_0\) is an open interval, and for every \(1\leq i<m\), the layer \(Y_i\) is a band over \(Y_{i-1}\). Therefore \(Y_{m-1}\) is an open cell in \(M^m\).

It remains to consider the final layer \(Y_m=C_\varphi^{\mathcal L}\) over \(Y_{m-1}\). If \(Y_m\) is a band over \(Y_{m-1}\), then \(C_\varphi^{\mathcal L}\) is open in \(M^{m+1}\). If \(Y_m\) is a graph over \(Y_{m-1}\), then there is a \(\mathcal C^l\) \(\mathcal L(M)\)-definable function \(f:Y_{m-1}\to M\) such that \(C_\varphi^{\mathcal L}=\Gamma(f)\). This gives the desired alternatives.
\end{proof}

We can now make the minimal source cell canonical in one variable.

\begin{corollary}\label{Minimal source formula in one variable}
Let \(l\in\mathbb N\cup\{\infty\}\) and let \(A\subseteq M\). Suppose that \(C\subseteq M\) is an \(\mathcal L^\delta(A)\)-definable \(\mathcal C^l\) \(\delta\)-cell. Then there exists an \(\mathcal L^\delta(A)\)-source formula \(\psi(x)\) for \(C\) which is minimal. Moreover, if \(n=\ord(\psi)\), then the source cell \(C_\psi^{\mathcal L}\) is uniquely determined by \(C\) and \(n\).
\end{corollary}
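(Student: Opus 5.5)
Two things must be shown. First, a minimal source formula exists and can be taken over \(A\). Second, its source cell depends only on \(C\) and \(n\). For the second part, Lemma~\ref{Source cell is unique in one variable} already gives that any two source formulas of the same order have the same source cell. So the real work is existence over \(A\).

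For existence, I start from a source formula witnessing that \(C\) is a \(\mathcal C^l\) \(\delta\)-cell, of some order \(m_0\). The set of orders of source formulas for \(C\) is a nonempty subset of \(\mathbb N\), and \(\prec_{\mathrm{rlex}}\) is the usual order in one variable. Hence there is a least order \(n\) and some source formula \(\varphi\) of order \(n\), a priori with parameters from \(M\). To make sure the witness over \(A\) is allowed, I would first check that the initial source formula can be taken over \(A\). I would apply Theorem~\ref{Refined delta decomposition theorem} with \(B=A\) and \(\mathcal A=\{C\}\). A \(\delta\)-cell of that decomposition contained in \(C\) need not equal \(C\), so instead I would use the given hypothesis directly. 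I read ``\(\mathcal L^\delta(A)\)-definable \(\mathcal C^l\) \(\delta\)-cell'' as providing a source formula. If that formula is not over \(A\), the parameter issue is handled at the end through uniqueness.

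The key step is descending \(A\)-definability. Let \(\varphi\) be a minimal source formula of order \(n\) with source cell \(C^{\mathcal L}:=C_\varphi^{\mathcal L}\). I claim that \(C^{\mathcal L}\) is \(\mathcal L(A)\)-definable. Since \(C\) is \(\mathcal L^\delta(A)\)-definable, so is \(\Jet_n(C)\). By density, \(C^{\mathcal L}\subseteq\cl(\Jet_n(C))\), and by Fact~\ref{Open core} this closure is \(\mathcal L(A)\)-definable. I then recover \(C^{\mathcal L}\) from \(\cl(\Jet_n(C))\) by an \(\mathcal L(A)\)-definable recipe. By Proposition~\ref{One variable source cell shape}, \(C^{\mathcal L}\) is either an open cell or the graph of a continuous function over an open cell. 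In the first case, \(C^{\mathcal L}\) is the interior of its closure. This holds because an open cell is regular open: \(\Pi_{n-1}\) of it is an open cell, and the bounding functions are continuous. In the graph case, \(C^{\mathcal L}\) is the part of \(\cl(\Jet_n(C))\) lying over \(\operatorname{int}\cl(\Jet_{n-1}(C))\). Both are \(\mathcal L(A)\)-definable by Fact~\ref{Open core}, because \(\Pi_{n-1}(C^{\mathcal L})\) is an open cell in which \(\Jet_{n-1}(C)\) is dense (Lemma~\ref{Projection of source cell is a source cell}). I should check that the closure of a graph over an open cell, restricted to that open cell, is the graph itself. This holds by continuity up to a possible boundary of the base, which lies outside the open base.

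Finally, let \(\theta\) be an \(\mathcal L(A)\)-formula defining \(C^{\mathcal L}\), and let \(\psi(x):=\theta(\Jet_n(x))\). Then \(\psi\) is an \(\mathcal L^\delta(A)\)-formula defining \(C\), since \(C^{\mathcal L}\cap\Jet_n(M)=\Jet_n(C)\). Its source set is the cell \(C^{\mathcal L}\), in which \(\Jet_n(C)\) is dense. One small point is that \(\ord(\psi)\) should equal \(n\). If \(\theta\) happened not to mention the last coordinate, then \(\ord(\psi)<n\), contradicting minimality. So \(\psi\) is a minimal source formula over \(A\), and uniqueness follows from Lemma~\ref{Source cell is unique in one variable}. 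I expect the main obstacle to be the \(A\)-definable reconstruction of the cell from the closure in the graph case. There I must also handle \(n=0\) separately, where \(C\) itself is a point or interval and is \(\mathcal L(A)\)-definable by Fact~\ref{Open core}.
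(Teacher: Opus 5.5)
Your proof is correct. Its core is the same as the paper's, whose proof consists only of your first and last steps:
\begin{itemize}
\item the orders of source formulas form a nonempty subset of \(\mathbb N\), so a least one exists;
\item Lemma~\ref{Source cell is unique in one variable} forces any two source formulas of that order to have the same source cell.
\end{itemize}

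The paper does no parameter descent. In its definition, a source formula for an \(\mathcal L^\delta(B)\)-definable set is taken to be an \(\mathcal L^\delta(B)\)-formula, so the existence of a source formula over \(A\) is part of the hypothesis.

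Your middle step is therefore extra, and it is sound:
\begin{itemize}
\item \(\cl(\Jet_n(C))\) and \(\operatorname{int}\cl(\Jet_{n-1}(C))\) are \(\mathcal L(A)\)-definable by Fact~\ref{Open core}.
\item Open cells are regular open, and the graph of a continuous function over an open base is relatively closed. So your recipe recovers \(C^{\mathcal L}\) in both cases of Proposition~\ref{One variable source cell shape}.
\item You correctly apply that proposition to a formula that is minimal over \(M\). That is the minimality its proof uses, since Lemma~\ref{Graph layer lowers order} produces formulas with parameters from \(M\).
\end{itemize}

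This gives a stronger conclusion than the paper records. The least source order over \(A\) equals the least order over \(M\), and the canonical cell \(C^{\mathcal L}\) is \(\mathcal L(A)\)-definable. That is what one needs if \(C^{\mathcal L}\) is to be an invariant of \(C\) over \(A\).

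Two small remarks:
\begin{itemize}
\item Your outline says the parameter issue is handled ``through uniqueness,'' but the work is actually done by your explicit reconstruction. Uniqueness alone would give \(A\)-definability only via an automorphism argument in a saturated extension.
\item The worry that \(\ord(\psi)<n\) can be sidestepped by adding the dummy conjunct \(x^{(n)}=x^{(n)}\) to \(\theta\), as the paper does in its examples. Your minimality argument also handles it.
\end{itemize}
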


\begin{proof}
By definition of \(\delta\)-cell, \(C\) has at least one source formula. Since the orders of one-variable source formulas are natural numbers, there is a source formula \(\psi\) of least possible order. Thus \(\psi\) is minimal. If \(\theta\) is another source formula for \(C\) of order \(n=\ord(\psi)\), then Lemma~\ref{Source cell is unique in one variable} gives \(C_\psi^{\mathcal L}=C_\theta^{\mathcal L}\). Hence the source cell of minimal order is uniquely determined by \(C\) and \(n\).
\end{proof}

Thus, for a one-variable \(\delta\)-cell \(C\), we may write \(C^{\mathcal L}\) for its source cell of minimal order.

\begin{definition}
Let \(C\subseteq M\) be a one-variable \(\delta\)-cell. We say that \(C\) is of \emph{class \(\mathcal C^l\)} if its minimal source cell \(C^{\mathcal L}\) is of class \(\mathcal C^l\). Similarly, we say that \(C\) is \emph{regular} or \emph{smooth} if \(C^{\mathcal L}\) is regular or smooth, respectively.
\end{definition}

\begin{remark}
This terminology refers to the smoothness of the associated source cell, not to ordinary Euclidean smoothness of the underlying subset of \(M\).
\end{remark}

The minimal source cell records the geometry of \(C\) at the jet level \(\ord(C)\). Its lower coordinate projections record the compatible lower-order jet data. We package these projections as the configuration of \(C\).

\begin{definition}
Let \(C\subseteq M\) be a one-variable \(\delta\)-cell, and let \(n\) be the order of a minimal source formula for \(C\). The \emph{configuration} of \(C\) up to order \(n\) is the sequence \((C_j)_{j=0}^n\) defined by
\[
C_j:=\Pi_j(C^{\mathcal L})
\]
for each \(j=0,\ldots,n\). We call \(C_j\) the \emph{\(j^{\mathrm{th}}\) configuration} of \(C\).
\end{definition}

By construction, \(\Jet_j(C)\) is dense in \(C_j\) for every \(j=0,\ldots,n\). We next extend this configuration beyond the order of \(C\). The point is that, once the minimal source cell is a graph, the higher jet coordinates are obtained by repeatedly applying \(\mathcal S\).

\begin{definition}\label{Iterated S operator}
Let \(A\subseteq M\), and let \(f:X\to M\) be an \(\mathcal L(A)\)-definable function of class \(\mathcal C^l\). Define \(\mathcal S^{[0]}(f):=f\). Having defined \(\mathcal S^{[i]}(f)\), we regard it as a function on its graph domain, and define \(\mathcal S^{[i+1]}(f)\) to be the function obtained from Lemma~\ref{Going up property} applied to \(\mathcal S^{[i]}(f)\). Equivalently, \(\mathcal S^{[i+1]}(f)\) is the restriction of \(\mathcal S(\mathcal S^{[i]}(f))\) to \(\Gamma(\mathcal S^{[i]}(f))\). Thus \(\mathcal S^{[i]}(f)\) is defined for \(0\leq i\leq l\), whenever the successive graph-lifting domains are understood.
\end{definition}

The iterated operator describes the higher configurations of a one-variable \(\delta\)-cell. The open case is controlled directly by genericity, while the graph case is controlled by repeated applications of Lemma~\ref{Going up property}.

\begin{proposition}\label{Complete configuration going up}
Let \(l\in\mathbb N\cup\{\infty\}\), and let \(C\subseteq M\) be a \(\mathcal C^l\) \(\delta\)-cell of order \(n\). Then, for each \(r\) with \(0\leq r\leq l\), there is an \(\mathcal L(M)\)-definable o-minimal cell \(C_{n+r}\subseteq M^{n+r+1}\) such that \(\Jet_{n+r}(C)\) is dense in \(C_{n+r}\). More precisely:
\begin{itemize}
\item[(i)] if \(C\) is a singleton, then \(C_{n+r}=\Jet_{n+r}(C)\);
\item[(ii)] if \(C^{\mathcal L}\) is open in \(M^{n+1}\), then \(C_{n+r}=C^{\mathcal L}\times M^r\);
\item[(iii)] if \(n>0\) and \(C^{\mathcal L}=\Gamma(f)\), where \(f:\Pi_{n-1}(C^{\mathcal L})\to M\) is an \(\mathcal L(M)\)-definable function of class \(\mathcal C^l\), then \(C_{n+r}=\Gamma(\mathcal S^{[r]}(f))\).
\end{itemize}
\end{proposition}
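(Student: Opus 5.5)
The proof splits according to the shape of the minimal source cell $C^{\mathcal L}$, which Proposition~\ref{One variable source cell shape} gives. If $n=0$, then $C$ is a point or an open interval. If $n>0$, then $C^{\mathcal L}$ is either an open $\mathcal C^l$-cell in $M^{n+1}$ or the graph of a $\mathcal C^l$ function $f$ over an open cell $\Pi_{n-1}(C^{\mathcal L})$. The case of a nonsingleton cell with $n=0$ is an open interval and falls under (ii). So we treat the three cases (i)--(iii) separately. In each case we verify two things: that the proposed set is an $\mathcal L(M)$-definable o-minimal cell, and that $\Jet_{n+r}(C)$ is dense in it.

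\emph{Case (i).} Suppose $C=\{c\}$. Then $\Jet_{n+r}(C)$ is a single point of $M^{n+r+1}$. A point is a cell, so density is trivial, and the set is $\mathcal L(M)$-definable with the parameter $\Jet_{n+r}(c)$.

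\emph{Case (ii).} Suppose $C^{\mathcal L}$ is open. Then $C^{\mathcal L}\times M^r$ is an open cell, obtained by adding $r$ unrestricted band layers, and it is $\mathcal L(M)$-definable. Since $C$ is defined by a source formula of order $n$, we have
\[
\Jet_{n+r}(C)=\Jet_{n+r}(M)\cap (C^{\mathcal L}\times M^r).
\]
By Fact~\ref{Jet space is dense} with $n$ replaced by $n+r$, the set $\Jet_{n+r}(M)$ is dense in $M^{n+r+1}$. Hence it meets every nonempty open subset of the open set $C^{\mathcal L}\times M^r$, which gives density. This case uses genericity directly and needs no smoothness.

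\emph{Case (iii).} Suppose $C^{\mathcal L}=\Gamma(f)$. We argue by induction on $r\le l$, with the claim that $\Gamma(\mathcal S^{[r]}(f))$ is a cell in which $\Jet_{n+r}(C)$ is dense. For $r=0$ this holds because $C^{\mathcal L}$ is a source cell.

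For the inductive step, assume the claim for $r$ with $r<l$. The function $\mathcal S^{[r]}(f)$ is of class $\mathcal C^{l-r}$ and $l-r\ge 1$, so Lemma~\ref{Going up property} applies to it with $X=\Gamma(\mathcal S^{[r]}(f))\subseteq M^{n+r+1}$. It produces $\mathcal S^{[r+1]}(f)$ such that $\Jet_{n+r+1}(\Jet_{n+r}^{-1}(X))$ is dense in its graph. Since every layer above $\Pi_{n}$ is a graph, we have
\[
\Jet_{n+r}^{-1}(X)=\Jet_n^{-1}(C^{\mathcal L})=C.
\]
Hence $\Jet_{n+r+1}(C)$ is dense in $\Gamma(\mathcal S^{[r+1]}(f))$. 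This set is a cell because it is the graph of a continuous definable function over the cell $X$. It is $\mathcal L(M)$-definable because $f$ is, and Fact~\ref{Chain rule for definable functions} keeps $f^{[\delta]}$ over the same parameters.

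The main obstacle is the hypothesis of Lemma~\ref{Going up property} in this inductive step. The lemma requires $\mathcal S^{[r]}(f)$ to be the restriction of a $\mathcal C^{l-r}$ function defined on an open set, so that $\mathcal S$ and the chain rule can be applied. For $r=0$ this follows because $f$ is defined on the open cell $\Pi_{n-1}(C^{\mathcal L})$. For $r\ge1$ the domain $X$ is itself a graph and not open. I would handle this by extending explicitly. Set $F_r:=\mathcal S^{[r]}(f)\circ\pi$, where $\pi$ is the $\mathcal C^\infty$ retraction that replaces the graph coordinates by their forced values. Concretely, on the open set $\Pi_{n-1}(C^{\mathcal L})\times M^{r+1}$, the retraction recomputes the coordinates $x^{(n)},\ldots,x^{(n+r)}$ successively from $f,\mathcal S(f),\ldots$. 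This extension is $\mathcal C^{l-r}$ on an open set, so the construction of Lemma~\ref{Going up property} applies to it. The case $l=\infty$ follows because each finite $r$ is covered.
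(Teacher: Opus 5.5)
Your proposal is correct and follows the paper's proof: the same case split, Fact~\ref{Jet space is dense} applied to \(C^{\mathcal L}\times M^r\) in the open case, and repeated application of Lemma~\ref{Going up property} in the graph case, using \(\Jet_{n+r}^{-1}(\Gamma(\mathcal S^{[r]}(f)))=C\). Your retraction argument makes explicit the open-domain extension that the paper's proof of Lemma~\ref{Going up property} simply assumes, but it has two small errors. First, the extension of \(\mathcal S^{[r]}(f)\) should live on \(\Pi_{n-1}(C^{\mathcal L})\times M^{r}\) and recompute \(x^{(n)},\ldots,x^{(n+r-1)}\), not \(M^{r+1}\) and \(x^{(n+r)}\). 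Second, the retraction is only as smooth as \(\mathcal S^{[r-1]}(f)\), not \(\mathcal C^\infty\), though this still gives a \(\mathcal C^{l-r}\) extension as required.
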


\begin{proof}
The singleton case is immediate. Suppose next that \(C^{\mathcal L}\) is open in \(M^{n+1}\). Then \(C=\Jet_n^{-1}(C^{\mathcal L})\), and
\[
\Jet_{n+r}(C)
=
\Jet_{n+r}(M)\cap (C^{\mathcal L}\times M^r).
\]
Since \(\Jet_{n+r}(M)\) is dense in \(M^{n+r+1}\) by Fact~\ref{Jet space is dense}, it follows that \(\Jet_{n+r}(C)\) is dense in \(C^{\mathcal L}\times M^r\).

It remains to consider the graph case. Suppose that \(C^{\mathcal L}=\Gamma(f)\), where \(f:\Pi_{n-1}(C^{\mathcal L})\to M\) is of class \(\mathcal C^l\). Since \(\Jet_n(C)\) is dense in \(C^{\mathcal L}\), Lemma~\ref{Going up property} gives a unique function \(\mathcal S(f)\) such that \(\Jet_{n+1}(C)\) is dense in \(\Gamma(\mathcal S(f))\). Applying the same lemma repeatedly gives, for each \(r\leq l\), that \(\Jet_{n+r}(C)\) is dense in \(\Gamma(\mathcal S^{[r]}(f))\). Thus \(C_{n+r}=\Gamma(\mathcal S^{[r]}(f))\), as required.
\end{proof}

We now extend the configuration of \(C\) beyond its order.

\begin{definition}\label{Complete configuration of one variable cell}
Let \(l\in\mathbb N\cup\{\infty\}\), and let \(C\subseteq M\) be a \(\mathcal C^l\) one-variable \(\delta\)-cell of order \(n\). The \emph{complete configuration} of \(C\) is the sequence \((C_j)\) defined as follows. For \(0\leq j\leq n\), set \(C_j=\Pi_j(C^{\mathcal L})\). For \(j=n+r\) with \(0\leq r\leq l\), define \(C_j\) as in Proposition~\ref{Complete configuration going up}. When \(l=\infty\), this gives a sequence indexed by all \(j\geq0\).
\end{definition}

For every \(j\) in the range of the complete configuration, \(\Jet_j(C)\) is dense in \(C_j\). Once the minimal source cell \(C^{\mathcal L}\) is fixed, the terms with \(j\leq n\) are its projections. The terms with \(j>n\) are the cells given by Proposition~\ref{Complete configuration going up}; in the graph case, these higher terms are uniquely determined by repeated applications of Lemma~\ref{Going up graph uniqueness}.

The preceding results show that a one-variable \(\delta\)-cell has a very simple configuration: it is open up to its minimal graph level, and after that all higher levels are forced by the chain rule.

\begin{corollary}\label{Layout of one dimensional delta cell}
Let \(l\in\mathbb N\cup\{\infty\}\), and let \(C\subseteq M\) be a \(\mathcal C^l\) one-variable \(\delta\)-cell. Let \((C_j)\) be its complete configuration. Then there is a unique \(n_0\in\mathbb N\cup\{\infty\}\) such that \(C_j\) is open in \(M^{j+1}\) for all \(j<n_0\), and, if \(n_0<\infty\), \(C_{n_0}\) is a graph over \(C_{n_0-1}\). In this case, \(C_j\) is a graph over \(C_{j-1}\) for every \(j\geq n_0\) in the range of the complete configuration.
\end{corollary}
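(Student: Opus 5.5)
The plan is to combine Proposition~\ref{One variable source cell shape} with Proposition~\ref{Complete configuration going up}, splitting according to the shape of the minimal source cell \(C^{\mathcal L}\). Let \(n\) be the order of a minimal source formula for \(C\).

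\emph{Case \(n=0\).} Here \(C\) is a point or an open interval.
\begin{itemize}
\item[(a)] If \(C\) is an open interval, then \(C^{\mathcal L}\) is open in \(M\). By Proposition~\ref{Complete configuration going up}(ii), every \(C_j=C^{\mathcal L}\times M^j\) is open, and we take \(n_0=\infty\).
\item[(b)] If \(C\) is a point \(\{c\}\), then \(C_0\) is a point, so it is a graph over the zero-dimensional base. We take \(n_0=0\). Each \(C_j=\{\Jet_j(c)\}\) is a single point, hence the graph of a constant function over the point \(C_{j-1}\).
\end{itemize}

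\emph{Case \(n>0\).} Proposition~\ref{One variable source cell shape} says that \(C_0\) is an open interval and \(C_j\) is a band over \(C_{j-1}\) for \(1\le j<n\). Hence \(C_j=\Pi_j(C^{\mathcal L})\) is open in \(M^{j+1}\) for all \(j<n\).
\begin{itemize}
\item[(a)] If \(C^{\mathcal L}\) is open, Proposition~\ref{Complete configuration going up}(ii) gives that \(C_j=C^{\mathcal L}\times M^{j-n}\) is open for all \(j\ge n\). We take \(n_0=\infty\).
\item[(b)] If \(C^{\mathcal L}=\Gamma(f)\) with \(f\) of class \(\mathcal C^l\) on the open cell \(C_{n-1}\), we take \(n_0=n\). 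Then \(C_{n_0}\) is a graph over \(C_{n_0-1}\). By Proposition~\ref{Complete configuration going up}(iii), \(C_{n+r}=\Gamma(\mathcal S^{[r]}(f))\). By Definition~\ref{Iterated S operator}, \(\mathcal S^{[r]}(f)\) is a function on \(\Gamma(\mathcal S^{[r-1]}(f))=C_{n+r-1}\). So \(C_j\) is a graph over \(C_{j-1}\) for every \(j\ge n_0\) in the range \(j\le n+l\).
\end{itemize}

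\emph{Uniqueness of \(n_0\).} If \(n_0<\infty\), then \(C_{n_0}\) is a graph, so \(\dim C_{n_0}\le n_0<n_0+1\), and \(C_{n_0}\) is not open in \(M^{n_0+1}\). Hence \(n_0\) is exactly the least index \(j\) at which \(C_j\) fails to be open, provided no later open layer can exist. That last point holds because every \(C_j\) with \(j\ge n_0\) is again a graph, hence not open.

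The argument is well defined because \(C^{\mathcal L}\) does not depend on the choice of minimal source formula, by Corollary~\ref{Minimal source formula in one variable}.

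The main subtlety is the bookkeeping in the point case and at \(l<\infty\). In the graph case the configuration is only defined up to \(j=n+l\), and the graph property is claimed only there. In the point case one must read ``graph over \(C_{-1}\)'' by the convention of Lemma~\ref{Graph layer lowers order}. Neither point needs new mathematics; the content is entirely supplied by the two cited propositions.
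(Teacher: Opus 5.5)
Your proof is correct and follows the same route as the paper: you read off the shape of \(C^{\mathcal L}\) from Proposition~\ref{One variable source cell shape} and propagate it upward with Proposition~\ref{Complete configuration going up}, taking \(n_0=\infty\) in the open case and \(n_0=n\) in the graph case. Your version is slightly more careful than the paper's in two places. First, you treat the order-zero singleton case explicitly, taking \(n_0=0\) with \(C_{-1}\) read as a point; the paper's proof passes over this case. Second, you justify uniqueness by noting that \(n_0\) must be the least \(j\) with \(C_j\) not open.
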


\begin{proof}
By Proposition~\ref{One variable source cell shape}, the minimal source cell \(C^{\mathcal L}\) is either open, or it is the first graph layer over an open cell. In the first case, Proposition~\ref{Complete configuration going up} gives that every higher configuration is open, so we take \(n_0=\infty\). In the second case, let \(n_0\) be the first level at which the minimal source cell is a graph. Then \(C_j\) is open for \(j<n_0\), and \(C_{n_0}\) is a graph over \(C_{n_0-1}\). Proposition~\ref{Complete configuration going up} then implies that every higher configuration is obtained by repeated applications of Lemma~\ref{Going up property}, and hence is a graph over the preceding configuration. Uniqueness of \(n_0\) is immediate from the defining condition.
\end{proof}

\subsection{The general case}

We now pass from one variable to several variables. The main new issue is that a \(\delta\)-cell may admit many source formulas. Some source formulas may introduce redundant higher derivative coordinates, and these extra coordinates can obscure the fiber behavior of the associated source cell.

For example, consider
\[
C=\{(x,y)\in M^2:\delta y=x\}.
\]
The formula \(\delta y=x\) gives a natural source cell in coordinates \((x,y,\delta y)\). However, the equivalent formula
\[
\delta y=x\wedge \delta^2y=\delta^2y
\]
has source set in coordinates \((x,y,\delta y,\delta^2y)\) given by
\[
D=\{(x,y,y',y'')\in M^4:y'=x\}.
\]
The realized jets are dense in \(D\), so this is still a valid source cell for \(C\). But after taking the fiber over \(x=a\), the source fiber is
\[
D(a)=\{(y,y',y'')\in M^3:y'=a\},
\]
whereas the realized jets in the actual fiber \(C_a=\{y:\delta y=a\}\) are
\[
\{(y,a,\delta a):y\in M\}.
\]
This set is not dense in \(D(a)\). Thus arbitrary source formulas may introduce extra derivative coordinates which destroy the expected fiber behavior.

There is also a blockwise issue. For instance, the set
\[
C=\{(x,y)\in M^2:\delta y=x\wedge \delta^2y>0\}
\]
can be defined by a formula of order \((0,2)\). It can also be defined by
\[
\delta y=x\wedge \delta x>0,
\]
which has order \((1,1)\). The second presentation distributes the derivative coordinates more naturally between the two blocks. This suggests that, in several variables, one should compare source formulas by allowing derivative order to move from later blocks to earlier blocks.

We order \(\mathbb N^k\) by reverse lexicographic order: for \(m,n\in\mathbb N^k\), we write \(m\prec_{\mathrm{rlex}} n\) if, letting \(i\) be the largest index such that \(m_i\neq n_i\), we have \(m_i<n_i\).

Let \(C\subseteq M^k\) be a \(\delta\)-cell, and let \(\varphi\) be a source formula for \(C\) with \(\ord(\varphi)=n=(n_1,\ldots,n_k)\). Write \(D:=C_\varphi^{\mathcal L}\subseteq E_n\). We group the coordinates of \(E_n\) into \(k\) blocks, where the \(i^{\mathrm{th}}\) block is
\[
x_i^{(0)},\ldots,x_i^{(n_i)}.
\]
For \(1\leq i\leq k\) and \(0\leq j\leq n_i+1\), let
\[
\rho_{i,j}:E_n\to M^{(n_1+1)+\cdots+(n_{i-1}+1)+j}
\]
be the coordinate projection keeping all coordinates in the first \(i-1\) blocks and the first \(j\) coordinates in the \(i^{\mathrm{th}}\) block. Thus \(\rho_{i,0}\) keeps only the previous blocks, while \(\rho_{i,n_i+1}\) keeps the first \(i\) full blocks. This notation is separate from the jet-truncation maps \(\Pi_m\); the maps \(\rho_{i,j}\) are used only to describe the inductive construction of the source cell one coordinate at a time.

\begin{definition}\label{Definition of Layout}
A \emph{layout} of \(C\) with respect to the source formula \(\varphi\) is the \(k\)-tuple of vectors of pairs of functions
\[
\mathcal F_C^\varphi=
\begin{pmatrix}
\begin{pmatrix}
f_{1,0,0},f_{1,0,1}\\
\cdots\\
f_{1,n_1,0},f_{1,n_1,1}
\end{pmatrix},
\ldots,
\begin{pmatrix}
f_{k,0,0},f_{k,0,1}\\
\cdots\\
f_{k,n_k,0},f_{k,n_k,1}
\end{pmatrix}
\end{pmatrix}
\]
obtained from a chosen inductive cell presentation of \(D=C_\varphi^{\mathcal L}\) as follows. For each \(1\leq i\leq k\) and \(0\leq j\leq n_i\), the cell \(\rho_{i,j+1}(D)\) is either the graph of an \(\mathcal L(M)\)-definable function \(f_{i,j,0}:\rho_{i,j}(D)\to M\), in which case we set \(f_{i,j,1}=f_{i,j,0}\), or the band \((f_{i,j,0},f_{i,j,1})_{\rho_{i,j}(D)}\) between two extended \(\mathcal L(M)\)-definable boundary functions \(f_{i,j,0}<f_{i,j,1}\) on \(\rho_{i,j}(D)\), where we allow \(f_{i,j,0}=-\infty\) and \(f_{i,j,1}=+\infty\).

The \(i^{\mathrm{th}}\) vector is called the \emph{\(i^{\mathrm{th}}\) block} of the layout, and the pair \((f_{i,j,0},f_{i,j,1})\) is called the \emph{\(j^{\mathrm{th}}\) layer} of the \(i^{\mathrm{th}}\) block.
\end{definition}

\begin{definition}\label{Definition of prepared block and layout}
Using the notation in Definition~\ref{Definition of Layout}, the \(i^{\mathrm{th}}\) block of a layout \(\mathcal F_C^\varphi\) is called \emph{prepared} if there exists \(0\leq n_i'\leq n_i+1\) such that \(f_{i,j,0}<f_{i,j,1}\) for all \(0\leq j<n_i'\), and \(f_{i,j,0}=f_{i,j,1}\) for all \(n_i'\leq j\leq n_i\). The layout \(\mathcal F_C^\varphi\) is called \emph{prepared} if each of its blocks is prepared. A source formula is called \emph{prepared} if its layout is prepared.
\end{definition}

In the following arguments, we begin with source cells of class \(\mathcal C^r\), for \(r\) sufficiently large. This can be arranged uniformly for the finite decompositions used below. Indeed, fix an initial source order \(n=(n_1,\ldots,n_k)\). The order-reduction procedure below is finite: each step lowers the source order with respect to \(\prec_{\mathrm{rlex}}\), and although it may add finitely many derivative coordinates to earlier blocks, the number of such additions is bounded in terms of the current order. Hence, starting from \(n\), only finitely many applications of the operator \(\mathcal S\) are needed before a minimal prepared layout is reached. Let \(r(n)\) be large enough for all these applications. Since a finite refined \(\delta\)-decomposition involves only finitely many source orders, we may choose \(r\) larger than all the corresponding \(r(n)\). By Theorem~\ref{Refined delta decomposition theorem}, the source cells may be chosen of class \(\mathcal C^r\), and therefore all minimal source formulas obtained in the reduction have the differentiability required below.

\begin{definition}\label{Definition of minimal source formula and layout}
Let \(C\subseteq M^k\) be a \(\delta\)-cell. A source formula \(\varphi\) for \(C\) is called \emph{minimal} if \(\ord(\varphi)\) is minimal, among the orders of all source formulas for \(C\), with respect to \(\prec_{\mathrm{rlex}}\). A layout arising from a minimal source formula is called a \emph{minimal layout} of \(C\).
\end{definition}

Since \(\prec_{\mathrm{rlex}}\) is well-founded on \(\mathbb N^k\), every \(\delta\)-cell admits a minimal source formula, and hence admits a minimal layout. We next show that, whenever the chosen minimal source formula is sufficiently smooth for the required applications of \(\mathcal S\), its layout is prepared.

We prove this in three steps. First, if differentiating a graph layer produces a function depending only on the coordinates already present at the next level, then the next layer is forced to be a graph.

\begin{lemma}\label{Factorization forces graph layer}
Let \(C\subseteq M^k\) be a \(\delta\)-cell, let \(\varphi\) be a source formula for \(C\), and write \(D=C_\varphi^{\mathcal L}\). Suppose that, in the \(i^{\mathrm{th}}\) block of the layout of \(D\), the \(j^{\mathrm{th}}\) layer is the graph of an \(\mathcal L(M)\)-definable \(\mathcal C^1\)-function \(x_i^{(j)}=g_j\), where \(g_j:\rho_{i,j}(D)\to M\). Suppose moreover that \(\mathcal S(g_j)\) factors through \(\rho_{i,j+1}(D)\); that is, there is an \(\mathcal L(M)\)-definable function \(\widetilde g_{j+1}:\rho_{i,j+1}(D)\to M\) such that \(\mathcal S(g_j)=\widetilde g_{j+1}\circ\rho_{i,j+1}\) on the lifted domain above \(\rho_{i,j+1}(D)\). Then the \((j+1)^{\mathrm{st}}\) layer is the graph of \(\widetilde g_{j+1}\).
\end{lemma}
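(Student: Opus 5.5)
The plan is to imitate the proof of Lemma~\ref{Graph layer lowers order}, now working inside the \(i^{\mathrm{th}}\) block. Density of realized jets will play the role it played there. Write \(X_j:=\rho_{i,j+1}(D)=\Gamma(g_j)\) and \(X_{j+1}:=\rho_{i,j+2}(D)\).

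First I will check that the realized jets of \(C\) are dense in every coordinate projection of \(D\). The argument of Lemma~\ref{Projection of source cell is a source cell} works verbatim for an arbitrary initial-coordinate projection. If an open set met \(\rho_{i,j}(D)\) but missed the projected jets, its preimage would be a nonempty open subset of \(D\) avoiding \(\Jet_n(C)\), which is impossible. In particular the projections of \(\Jet_n(C)\) are dense in \(X_j\) and in \(X_{j+1}\).

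Next I will identify the new coordinate on realized jets. Let \(a\in C\), and let \(z\) be the projection of \(\Jet_n(a)\) to \(X_j\). Then \(x_i^{(j)}=\delta^j a_i=g_j(\rho_{i,j}(z))\). Differentiating with the chain rule (Fact~\ref{Chain rule for definable functions}, after extending \(g_j\) to an open neighbourhood as in Lemma~\ref{Going up property}) gives
\[
\delta^{j+1}a_i=\mathcal S(g_j)(\text{lifted jet}).
\]
The lifted jet may involve derivatives one step higher in the earlier blocks, but these are the coordinates on which the factorization hypothesis is imposed. By hypothesis the right side equals \(\widetilde g_{j+1}\) evaluated at the projection of \(\Jet_n(a)\) to \(X_j\). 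Hence every realized jet in \(X_{j+1}\) lies on \(\Gamma(\widetilde g_{j+1})\).

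Now I will argue that the \((j+1)^{\mathrm{st}}\) layer is a graph. Suppose instead that \(X_{j+1}\) is a band \((f_{i,j+1,0},f_{i,j+1,1})\) over \(X_j\). Over a small open box in \(X_j\), after shrinking so that \(\widetilde g_{j+1}\) is continuous there (o-minimal generic continuity, or using that \(\mathcal S(g_j)\) is \(\mathcal C^0\)), I pick an open subset of the band disjoint from \(\Gamma(\widetilde g_{j+1})\). It contains no realized jets, contradicting density. So \(X_{j+1}=\Gamma(h)\) for some continuous \(h\). Then \(h\) and \(\widetilde g_{j+1}\) agree on a dense subset of \(X_j\), and I conclude \(h=\widetilde g_{j+1}\) as in Corollary~\ref{Going up graph uniqueness}.

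The main obstacle is that last equality. It requires \(\widetilde g_{j+1}\) to be continuous on all of \(X_j\), whereas the hypothesis only says it is definable. I would get continuity from the factorization itself. \(\mathcal S(g_j)\) is continuous and \(\rho_{i,j+1}\) is a projection whose restriction to the lifted domain has continuous sections; for example, fix the extra higher-derivative coordinates at a constant value. Then \(\widetilde g_{j+1}=\mathcal S(g_j)\circ\sigma\) for such a section \(\sigma\), which is continuous.
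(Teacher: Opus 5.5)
Your proof is correct and follows the same route as the paper's: the chain rule places every realized jet on $\Gamma(\widetilde g_{j+1})$, density of realized jets in $D$ (and hence in its projections) rules out a band at layer $j+1$, and continuity together with density identifies the graph function with $\widetilde g_{j+1}$. Your section argument, showing that $\widetilde g_{j+1}=\mathcal S(g_j)\circ\sigma$ is continuous, supplies a point the paper uses without justification when it appeals to the continuity of $\widetilde g_{j+1}$.
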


\begin{proof}
On realized jets, differentiating the identity \(x_i^{(j)}=g_j\) gives
\[
x_i^{(j+1)}=\mathcal S(g_j)=\widetilde g_{j+1}(\rho_{i,j+1}).
\]
If the \((j+1)^{\mathrm{st}}\) layer is already a graph, say the graph of \(h:\rho_{i,j+1}(D)\to M\), then \(h=\widetilde g_{j+1}\) on the dense subset \(\rho_{i,j+1}(\Jet_n(C))\) of \(\rho_{i,j+1}(D)\). By continuity, \(h=\widetilde g_{j+1}\) on all of \(\rho_{i,j+1}(D)\).

It remains to rule out the band case. Suppose that the \((j+1)^{\mathrm{st}}\) layer is a band over \(\rho_{i,j+1}(D)\). Choose \(p\in\rho_{i,j+1}(D)\). The fiber of this band over \(p\) is a nonempty open interval, so it contains an open subinterval disjoint from \(\{\widetilde g_{j+1}(p)\}\). By continuity of the boundary functions and of \(\widetilde g_{j+1}\), there is a nonempty relatively open subset \(U\) of \(D\) on which \(x_i^{(j+1)}\neq \widetilde g_{j+1}(\rho_{i,j+1})\). Since \(\Jet_n(C)\) is dense in \(D\), the set \(U\) contains a realized jet. This contradicts the displayed identity on realized jets. Hence the \((j+1)^{\mathrm{st}}\) layer cannot be a band, and therefore it is the graph of \(\widetilde g_{j+1}\).
\end{proof}

The second step identifies where the first obstruction to preparedness must come from. If a block has a graph layer followed later by a band layer, then the formal derivative of the last graph layer before the band must involve a derivative coordinate from an earlier block which is not yet present.

\begin{lemma}\label{First band detects missing earlier derivative}
Let \(C\subseteq M^k\) be a \(\delta\)-cell, let \(\varphi\) be a sufficiently smooth source formula for \(C\), and write \(\ord(\varphi)=n=(n_1,\ldots,n_k)\) and \(D=C_\varphi^{\mathcal L}\). Let \(T(\mathcal F_C^\varphi)=(t_{i,j})\) be the type of the layout. Suppose that, for some block \(i\), there are indices \(0\leq j<s\leq n_i\) such that \(t_{i,j}=0\) and \(t_{i,s}=1\). Let \(j_0\) be the least index \(j\) for which there exists \(s>j\) with \(t_{i,j}=0\) and \(t_{i,s}=1\), and let \(s_0>j_0\) be the least index such that \(t_{i,s_0}=1\). For \(j_0\leq j<s_0\), write the corresponding graph layer as \(x_i^{(j)}=g_j\), where \(g_j:\rho_{i,j}(D)\to M\) is \(\mathcal L(M)\)-definable. Then the formal derivative \(\mathcal S(g_{s_0-1})\) does not factor through \(\rho_{i,s_0}(D)\). Moreover, if a jet coordinate occurring in the formal expression for \(\mathcal S(g_{s_0-1})\) is not kept by \(\rho_{i,s_0}\) and is not the output coordinate \(x_i^{(s_0)}\), then it is of the form \(x_h^{(n_h+1)}\) for some earlier block \(h<i\).
\end{lemma}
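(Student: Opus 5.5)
The first assertion (non-factorization) I will prove by contradiction using Lemma~\ref{Factorization forces graph layer}. The type $t_{i,j}$ is $0$ for a graph layer and $1$ for a band layer. By the choice of $j_0$ and $s_0$, the layers $j_0,\ldots,s_0-1$ of block $i$ are all graphs $x_i^{(j)}=g_j$, and layer $s_0$ is a band.

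Suppose $\mathcal S(g_{s_0-1})$ factored through $\rho_{i,s_0}(D)$, say as $\widetilde g_{s_0}\circ\rho_{i,s_0}$. The layer $s_0-1$ is a graph of a sufficiently smooth (at least $\mathcal C^1$) function. Lemma~\ref{Factorization forces graph layer}, applied with $j=s_0-1$, then shows that layer $s_0$ is the graph of $\widetilde g_{s_0}$. This contradicts $t_{i,s_0}=1$.

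One point needs care here: the lifted domain on which $\mathcal S(g_{s_0-1})$ lives. I will take $g_{s_0-1}$ to be extended to a $\mathcal C^1$ function on an open neighbourhood of $\rho_{i,s_0-1}(D)$. That extension exists by the smoothness hypothesis, as in the proof of Lemma~\ref{Going up property}. The formal derivative is then the function $\mathcal S(\cdot)$ of the chain-rule construction, evaluated at the jet variables shifted by one. Density of $\Jet_n(C)$ in $D$ is what makes Lemma~\ref{Factorization forces graph layer} applicable.

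For the second assertion, I will write out the formal expression explicitly:
\[
\mathcal S(g_{s_0-1})=g_{s_0-1}^{[\delta]}+\sum_{(h,r)}\frac{\partial g_{s_0-1}}{\partial x_h^{(r)}}\,x_h^{(r+1)},
\]
where $(h,r)$ runs over the coordinates kept by $\rho_{i,s_0-1}$. These are all coordinates of blocks $h<i$ with $r\le n_h$, together with $x_i^{(r)}$ for $r<s_0-1$. The new coordinates appearing are the $x_h^{(r+1)}$.

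I then sort them into three cases.
\begin{itemize}
\item For $h<i$ and $r<n_h$, the coordinate $x_h^{(r+1)}$ is in block $h$, so it is kept by $\rho_{i,s_0}$.
\item For $h=i$, $r+1\le s_0-1$, so $x_i^{(r+1)}$ is kept by $\rho_{i,s_0}$; when $r+1=s_0-1$ it lies in the window $\rho_{i,s_0}$ retains; the only exception is $r=s_0-1$, which does not occur since $g_{s_0-1}$ depends only on $x_i^{(r)}$ with $r<s_0-1$. In particular the output coordinate $x_i^{(s_0)}$ could arise only from $r=s_0-1$.
\item The only remaining possibility is $h<i$ with $r=n_h$, which gives $x_h^{(n_h+1)}$.
\end{itemize}
The parameter term $g^{[\delta]}$ contributes no jet coordinates. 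This yields the claimed form.

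The main obstacle is bookkeeping. I must check that ``occurring in the formal expression'' is interpreted as occurring in the displayed sum, and not in a possibly non-canonical extension of $g_{s_0-1}$. I must also check that the extension to an open set does not introduce spurious dependence. I would handle this by fixing the extension once and noting that only its restriction to $D$ matters for the first assertion.
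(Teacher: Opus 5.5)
Your proposal is correct and matches the paper's proof: you get non-factorization by contradiction via Lemma~\ref{Factorization forces graph layer} applied with $j=s_0-1$, and the second claim by checking which new coordinates $x_h^{(r+1)}$ the chain-rule expression for $\mathcal S(g_{s_0-1})$ can introduce. Your bookkeeping is slightly sharper than the paper's: you use that $g_{s_0-1}$ is defined on $\rho_{i,s_0-1}(D)$, so the output coordinate $x_i^{(s_0)}$ never occurs at all, whereas the paper overcounts the domain variables and therefore needs the ``apart from $x_i^{(s_0)}$'' clause.
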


\begin{proof}
If \(\mathcal S(g_{s_0-1})\) factored through \(\rho_{i,s_0}(D)\), then there would be an \(\mathcal L(M)\)-definable function \(\widetilde g_{s_0}:\rho_{i,s_0}(D)\to M\) such that \(\mathcal S(g_{s_0-1})=\widetilde g_{s_0}\circ\rho_{i,s_0}\) on the lifted domain above \(\rho_{i,s_0}(D)\). By Lemma~\ref{Factorization forces graph layer}, applied with \(j=s_0-1\), the \(s_0^{\mathrm{th}}\) layer would then be the graph of \(\widetilde g_{s_0}\). This contradicts the choice of \(s_0\) as a band layer. Hence \(\mathcal S(g_{s_0-1})\) does not factor through \(\rho_{i,s_0}(D)\).

It remains to identify where the failure of factorization can come from. Since \(g_{s_0-1}\) is defined on \(\rho_{i,s_0}(D)\), the coordinates on which it depends are among
\[
\{x_h^{(r)}:h<i,\ 0\leq r\leq n_h\}\cup\{x_i^{(r)}:0\leq r\leq s_0-1\}.
\]
By the definition of \(\mathcal S\), every coordinate occurring in the formal expression for \(\mathcal S(g_{s_0-1})\) is either already among these coordinates or is the formal derivative \(x_h^{(r+1)}\) of one of them. The coordinates \(x_i^{(0)},\ldots,x_i^{(s_0-1)}\) are kept by \(\rho_{i,s_0}\), and \(x_i^{(s_0)}\) is the output coordinate of the next layer. Therefore any coordinate occurring in \(\mathcal S(g_{s_0-1})\) which is not kept by \(\rho_{i,s_0}\), apart from \(x_i^{(s_0)}\), must be the formal derivative of an earlier-block coordinate of maximal available order. Hence it is of the form \(x_h^{(n_h+1)}\) for some \(h<i\).
\end{proof}

The third step replaces the last block which is not prepared by moving the missing derivative coordinates to earlier blocks. This produces a new source formula of smaller reverse lexicographic order.

\begin{lemma}\label{Last bad block replacement}
Let \(C\subseteq M^k\) be a \(\delta\)-cell, let \(\varphi\) be a sufficiently smooth source formula for \(C\), and write \(\ord(\varphi)=n=(n_1,\ldots,n_k)\). Let \(\mathcal F_C^\varphi\) be the layout of \(C\) with respect to \(\varphi\), and let \(T(\mathcal F_C^\varphi)=(t_{h,j})\) be its type. Suppose that \(i\) is the largest block index for which the \(i^{\mathrm{th}}\) block is not prepared. Then there is another source formula \(\psi\) for \(C\), with order \(m=(m_1,\ldots,m_k)\), such that \(m_h=n_h\) for all \(h>i\), and \(m_i<n_i\). In particular, \(m\prec_{\mathrm{rlex}} n\).
\end{lemma}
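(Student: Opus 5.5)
The plan is to build \(\psi\) by keeping the first \(i-1\) blocks up to one extra derivative where needed, truncating the \(i^{\mathrm{th}}\) block just before its first bad band, and expressing the discarded coordinates through the chain rule. Let \(j_0<s_0\) be as in Lemma~\ref{First band detects missing earlier derivative}. Layers \(j_0,\ldots,s_0-1\) of block \(i\) are graphs \(x_i^{(j)}=g_j\), and layer \(s_0\) is a band. By that lemma, \(\mathcal S(g_{s_0-1})\) depends only on coordinates kept by \(\rho_{i,s_0}\), on the output coordinate \(x_i^{(s_0)}\), and on top-order coordinates \(x_h^{(n_h+1)}\) with \(h<i\).

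The first step is to promote these earlier blocks. For each \(h<i\) whose coordinate \(x_h^{(n_h+1)}\) occurs, raise the order of block \(h\) by one. Differentiating the identity \(x_i^{(s_0-1)}=g_{s_0-1}\) on realized jets gives
\[
x_i^{(s_0)}=\mathcal S(g_{s_0-1}),
\]
whose right-hand side now involves only coordinates of the enlarged earlier blocks and of \(x_i^{(0)},\ldots,x_i^{(s_0-1)}\). Hence on \(\Jet(C)\) the coordinate \(\delta^{s_0}x_i\) is a definable function of lower data. Iterating with Corollary~\ref{Iterated going up} (using the \(\mathcal C^r\) hypothesis), the same holds for all \(\delta^{j}x_i\) with \(s_0\le j\le n_i\), at the cost of raising earlier blocks by at most \(n_i-s_0+1\).

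The second step is to define \(\psi\). Take the cell presentation of \(D\) and substitute these chain-rule expressions for \(x_i^{(s_0)},\ldots,x_i^{(n_i)}\) into the constraints of the layers of block \(i\) at levels \(\ge s_0\) and of all later blocks \(h>i\). The constraint at layer \(s_0\) (the band inequalities) becomes an inequality among lower coordinates, which is folded in. The resulting \(\mathcal L(M)\)-formula, read in jet variables, defines \(C\). Its block \(i\) has order at most \(s_0-1<n_i\), blocks \(h>i\) keep order \(n_h\), and blocks \(h<i\) may grow. This gives \(m_h=n_h\) for \(h>i\) and \(m_i<n_i\), hence \(m\prec_{\mathrm{rlex}}n\).

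The main obstacle is that the new set \(D'\) must again be a cell in which \(\Jet_m(C)\) is dense. The plan is to apply the density-refinement step from the proof of Theorem~\ref{Refined delta decomposition theorem} to \(D'\), namely taking \(\cl(D'\cap\Jet_m(M^k))\) via Fact~\ref{Open core} followed by a \(\mathcal C^r\)-cell decomposition. The difficulty is that this could split \(C\) into several cells. So I would argue instead that \(D'\) is the image of an open-dense piece of \(D\), lifted by the chain-rule functions, and is therefore homeomorphic to a cell over a lifted base, with density transported by continuity as in Lemma~\ref{Going up property}. If \(D'\) fails to be literally a cell, I would fall back on showing that it is a finite union of cells, exactly one of which contains a dense subset of the realized jets. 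This is the step that needs the most care.
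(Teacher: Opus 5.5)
Your route is the paper's route: raise the earlier blocks by \(q=n_i-s_0+1\), express \(\delta^r x_i\) for \(s_0\le r\le n_i\) through iterated \(\mathcal S\), and pull \(D\) back along the substitution map \(F\). The step you flag as the main obstacle is where the argument genuinely breaks. The paper's proof does not get past it either; it simply asserts that each band \((u_0,u_1)\) of \(D\) pulls back to the band \((u_0\circ F,u_1\circ F)\).

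The band at layer \(s_0\) of block \(i\) lived in the coordinate \(x_i^{(s_0)}\), which has been eliminated. Its pullback is the open condition \(u_0<G_{s_0}<u_1\), where \(G_{s_0}\) is affine in the new coordinates \(x_h^{(n_h+1)}\), with coefficients depending on coordinates that come later in the block order. In general this is not a band in any coordinate.

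Neither of your repairs helps. \(D'\) is a preimage, not an image: the new coordinates \(x_h^{(n_h+1)}\) are not functions of the old ones, and \(\dim D'\) can exceed \(\dim D\). Your fallback also fails, because realized jets can be dense in several top-dimensional cells of any decomposition of \(D'\).

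In fact the lemma is false. Take \(C=\{(x,y,z)\in M^3: z=xy,\ \delta z>0\}\) with \(\varphi\) the formula \(z=xy\wedge\delta z>0\).
\begin{enumerate}
\item The source set \(D=\{z=xy,\ z^{(1)}>0\}\subseteq E_{(0,0,1)}\) is a Nash cell, of dimension \(3\). Since \(\delta(ab)=a\,\delta b+b\,\delta a\), genericity makes \(\Jet_{(0,0,1)}(C)\) dense in \(D\).
\item Block \(3\) is a graph followed by a band, so \(i=3\), and the lemma predicts a source formula of order \((m_1,m_2,0)\).
\item Your construction gives \(D'=\{z=xy,\ x\,y^{(1)}+x^{(1)}y>0\}\subseteq E_{(1,1,0)}\), which has dimension \(4\) and is not a cell. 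Over \(x>0\) the \(y^{(1)}\)-fibres are bounded below and unbounded above, over \(x<0\) the reverse, and realized jets are dense on both sides.
\item No other choice works either. A source cell of order \((m_1,m_2,0)\) would project to a cell \(P\subseteq E_{(m_1,m_2)}\). Because the third block has order \(0\), every jet in this projection lifts to a realized jet of \(C\). Hence \(P\) would meet \(\Jet_{(m_1,m_2)}(M^2)\) exactly in \(\Jet_{(m_1,m_2)}(\Pi_2(C))\), and this set would be dense in \(P\), where \(\Pi_2(C)=\{(a,b):a\,\delta b+b\,\delta a>0\}\).
\item Genericity then forces \(m_1,m_2\ge 1\). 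It also forces \(P\) to be an open cell contained in, and dense in, \(\{x\,y^{(1)}+x^{(1)}y>0\}\).
\item Project \(P\) to the coordinates up to \(y^{(1)}\). The upper boundary function of the \(y^{(1)}\)-band would have to be finite over base points with \(x<0\) and equal to \(+\infty\) over base points with \(x>0\). No cell allows this.
\end{enumerate}
So \(\varphi\) is already minimal and its layout is unprepared. This lemma, and with it Theorem~\ref{Minimal layouts are prepared}, fails as stated. What is missing is a correct formulation, not a technique, for instance one that permits splitting \(C\) into finitely many \(\delta\)-cells before preparing.
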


\begin{proof}
Let \(D:=C_\varphi^{\mathcal L}\). Since the \(i^{\mathrm{th}}\) block is not prepared, there are \(0\leq j<s\leq n_i\) such that \(t_{i,j}=0\) and \(t_{i,s}=1\). Let \(j_0\) and \(s_0\) be as in Lemma~\ref{First band detects missing earlier derivative}. Thus \(t_{i,j}=0\) for \(j_0\leq j<s_0\), and \(t_{i,s_0}=1\). For \(j_0\leq j<s_0\), write the corresponding graph layer as \(x_i^{(j)}=g_j\), where \(g_j:\rho_{i,j}(D)\to M\) is \(\mathcal L(M)\)-definable.

Set \(q:=n_i-s_0+1\), and define \(m=(m_1,\ldots,m_k)\) by
\[
m_h=n_h+q\quad(h<i),\qquad m_i=s_0-1,\qquad m_h=n_h\quad(h>i).
\]

We first define the functions which recover the removed coordinates in the \(i^{\mathrm{th}}\) block. Let
\[
G_{s_0}:=\mathcal S(g_{s_0-1}).
\]
By Lemma~\ref{First band detects missing earlier derivative}, every additional jet coordinate occurring in \(G_{s_0}\), apart from the output coordinate \(x_i^{(s_0)}\), is of the form \(x_h^{(n_h+1)}\) for some \(h<i\). Hence \(G_{s_0}\) is defined after enlarging the earlier blocks by one coordinate. Suppose that \(G_{s_0},\ldots,G_r\) have been defined for some \(r<n_i\). Define
\[
G_{r+1}:=\mathcal S(G_r)\big|_{x_i^{(s_0)}=G_{s_0},\,\ldots,\,x_i^{(r)}=G_r}.
\]
Each application of \(\mathcal S\) raises the required order in an earlier block by at most one. Since \(r-s_0+2\leq q\), all coordinates occurring in \(G_{r+1}\) belong to the earlier blocks of order at most \(n_h+q\), together with \(x_i^{(0)},\ldots,x_i^{(s_0-1)}\). By construction, for every \(a\in C\),
\[
\delta^r a_i=G_r(\Jet_m(a))
\]
for \(s_0\leq r\leq n_i\).

We now construct the new source cell. For each \(h<i\), extend the \(h^{\mathrm{th}}\) block from order \(n_h\) to order \(n_h+q\). At each new layer, use the graph prescribed by \(\mathcal S\) if the preceding layer is a graph, and use the band \((-\infty,+\infty)\) if the preceding layer is a band. Together with the truncation of the \(i^{\mathrm{th}}\) block at \(s_0-1\) and the unchanged later blocks, this gives an ambient cell \(\Omega\subseteq E_m\).

Define a map \(F:\Omega\to E_n\) by retaining all coordinates which are still present, and by setting \((F(z))_i^{(r)}:=G_r(z)\) for \(s_0\leq r\leq n_i\). Let
\[
D_\psi:=F^{-1}(D).
\]
By the inductive cell presentation of \(D\), the set \(D_\psi\) is an o-minimal cell: graph layers remain graph layers after substitution, and each band \((u_0,u_1)\) is pulled back to the band \((u_0\circ F,u_1\circ F)\).

For \(a\in C\), the identities above give \(F(\Jet_m(a))=\Jet_n(a)\), and hence \(\Jet_m(a)\in D_\psi\). Conversely, if \(\Jet_m(a)\in D_\psi\), then \(F(\Jet_m(a))\in D\), and the graph identities \(x_i^{(j)}=g_j\) for \(j_0\leq j<s_0\), together with the definitions of the functions \(G_r\), imply \(F(\Jet_m(a))=\Jet_n(a)\). Hence \(a\in C\). Therefore
\[
\Jet_m(C)=D_\psi\cap\Jet_m(M^k).
\]

It remains to prove density of \(\Jet_m(C)\) in \(D_\psi\). We prove this by induction through the coordinate construction of \(D_\psi\). If the next layer is the graph of a continuous definable function, density is preserved by continuity and the chain-rule identities on realized jets. If the next layer is a band, then every nonempty relatively open subset of the band has nonempty projection to the free jet coordinate, and genericity gives a realized jet inside it. Thus \(\Jet_m(C)\) is dense in \(D_\psi\).

Let \(\theta_\psi\) be an \(\mathcal L(M)\)-formula defining \(D_\psi\), and set \(\psi(x):=\theta_\psi(\Jet_m(x))\). Then \(\psi\) defines \(C\), and \(D_\psi\) is a source cell for \(\psi\). Thus \(\psi\) is a source formula for \(C\) of order \(m\). Since \(m_h=n_h\) for all \(h>i\), while \(m_i=s_0-1<n_i\), we have \(m\prec_{\mathrm{rlex}} n\).
\end{proof}

We can now conclude that minimal layouts are prepared.

\begin{theorem}\label{Minimal layouts are prepared}
Using the notation in Definition~\ref{Definition of Layout}, let \(C\subseteq M^k\) be a \(\delta\)-cell, and let \(\mathcal F_C^\varphi\) be a minimal layout arising from a sufficiently smooth source formula \(\varphi\). Then \(\mathcal F_C^\varphi\) is prepared.
\end{theorem}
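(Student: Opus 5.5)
The argument is by contradiction, and it is essentially a direct combination of Lemma~\ref{Last bad block replacement} with the definition of minimality.

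Suppose that \(\varphi\) is a minimal source formula for \(C\), sufficiently smooth, with order \(n=\ord(\varphi)\), and that its layout \(\mathcal F_C^\varphi\) is not prepared. Then the set of block indices whose block is not prepared is nonempty. Let \(i\) be its largest element. Lemma~\ref{Last bad block replacement} applies to \(\varphi\) and this \(i\). It produces a source formula \(\psi\) for \(C\) of order \(m\) with \(m_h=n_h\) for \(h>i\) and \(m_i<n_i\). Hence \(m\prec_{\mathrm{rlex}} n\), which contradicts the minimality of \(\ord(\varphi)\) among all source formulas for \(C\) (Definition~\ref{Definition of minimal source formula and layout}). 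So every block is prepared, and the layout is prepared.

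Before invoking the lemma, I will make explicit one reformulation of Definition~\ref{Definition of prepared block and layout}. Let \(t_{i,j}\) be \(1\) for a band layer and \(0\) for a graph layer. A block fails to be prepared exactly when there are indices \(j<s\) with \(t_{i,j}=0\) and \(t_{i,s}=1\); that is, some graph layer is followed later by a band layer. Indeed, preparedness says the type sequence of the block has the form \(1\cdots1\,0\cdots0\), and a sequence has this form iff no \(0\) precedes a \(1\). This is precisely the hypothesis used inside Lemma~\ref{Last bad block replacement}, via Lemma~\ref{First band detects missing earlier derivative}.

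The main point requiring care is the smoothness hypothesis. Lemma~\ref{Last bad block replacement} needs \(\varphi\) to be smooth enough for the finitely many applications of \(\mathcal S\) carried out in its proof, namely \(q=n_i-s_0+1\) iterations on blocks up to order \(n\). The theorem's phrase ``sufficiently smooth'' will be read as in the discussion preceding Definition~\ref{Definition of minimal source formula and layout}: \(\varphi\) is of class \(\mathcal C^{r}\) with \(r\ge r(n)\). This bound covers every application of \(\mathcal S\) needed at order \(n\).

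The contradiction only needs one application of the lemma. So I do not need the new formula \(\psi\) to be again sufficiently smooth, nor to iterate the reduction. Minimality is taken among all source formulas, regardless of smoothness, so \(\psi\) being any source formula of strictly smaller order already suffices. I expect no genuine obstacle beyond checking this bookkeeping point.
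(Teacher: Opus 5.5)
Your proposal is correct and is essentially the paper's proof: take the largest index \(i\) of a non-prepared block, apply Lemma~\ref{Last bad block replacement} to obtain a source formula \(\psi\) of strictly smaller \(\prec_{\mathrm{rlex}}\)-order, and contradict minimality. The paper leaves implicit the two points you make explicit, and both are accurate: preparedness means ``no graph layer precedes a band layer,'' and \(\psi\) need not be smooth because minimality ranges over all source formulas.
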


\begin{proof}
Suppose, toward a contradiction, that \(\mathcal F_C^\varphi\) is not prepared. Then there is at least one block which is not prepared. Let \(i\) be the largest block index such that the \(i^{\mathrm{th}}\) block is not prepared. By Lemma~\ref{Last bad block replacement}, there is another source formula \(\psi\) for \(C\), with order \(m\), such that
\[
m\prec_{\mathrm{rlex}}\ord(\varphi).
\]
This contradicts the minimality of \(\varphi\). Therefore every block of \(\mathcal F_C^\varphi\) is prepared, so \(\mathcal F_C^\varphi\) is prepared.
\end{proof}

We next show that the source configuration obtained from a minimal prepared layout is intrinsic. Although a \(\delta\)-cell may admit many defining formulas, minimality removes the ambiguity at the level of source cells, and the prepared form then forces all higher configurations by the chain-rule construction.

\begin{theorem}\label{Uniqueness of minimal source configurations}
Let \(C\subseteq M^k\) be a sufficiently smooth \(\delta\)-cell, and let \(\varphi\) and \(\psi\) be minimal source formulas for \(C\). Write \(n=\ord(\varphi)=\ord(\psi)\). Then
\[
C_\varphi^{\mathcal L}=C_\psi^{\mathcal L}\subseteq E_n.
\]
Moreover, if the corresponding minimal layouts are prepared, then they determine the same lower configurations and the same extended configurations. Hence the complete configuration of \(C\) is independent of the chosen minimal prepared source layout.
\end{theorem}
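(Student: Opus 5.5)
The plan is to reduce everything to one principle, already used in Lemma~\ref{Source cell is unique in one variable}: \emph{an o-minimal cell is determined by any dense subset of it, provided we also know its inductive presentation over the same base}. Since \(\varphi\) and \(\psi\) both define \(C\) and have the same order \(n\), we have
\[
C_\varphi^{\mathcal L}\cap\Jet_n(M^k)=C_\psi^{\mathcal L}\cap\Jet_n(M^k)=\Jet_n(C).
\]
By the definition of source formula, this set is dense in both cells. Hence \(\cl(C_\varphi^{\mathcal L})=\cl(C_\psi^{\mathcal L})\). The remaining work is to upgrade equality of closures to equality of the cells.

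For this I would run the coordinate-by-coordinate induction using the projections \(\rho_{i,j}\) of Definition~\ref{Definition of Layout}, in the ordering given by the inductive cell presentation. First, as in Lemma~\ref{Projection of source cell is a source cell}, the restriction of a coordinate projection to a cell is open onto its image. So \(\rho_{i,j}(\Jet_n(C))\) is dense in both \(\rho_{i,j}(C_\varphi^{\mathcal L})\) and \(\rho_{i,j}(C_\psi^{\mathcal L})\), and these projected cells therefore have equal closures. Now suppose inductively that \(\rho_{i,j}(C_\varphi^{\mathcal L})=\rho_{i,j}(C_\psi^{\mathcal L})=:Y\). Over \(Y\), the next layers of the two cells have the same closure.
\begin{itemize}
\item A graph and a band cannot have the same closure: the fibers of the closure over points of \(Y\) would have different dimensions.
\item If both are graphs of continuous functions on \(Y\), these functions agree on a dense set, so they are equal.
\item If both are bands, then by continuity of the boundary functions the fiber of the closure over \(y\in Y\) is \([f_0(y),f_1(y)]\) (suitably interpreted at \(\pm\infty\)). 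So the boundary functions coincide.
\end{itemize}
At the last step this gives \(C_\varphi^{\mathcal L}=C_\psi^{\mathcal L}\). Note that minimality is used here only to force the two orders to agree, and hence the two source cells to live in the same \(E_n\).

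For the configurations, the lower configurations are the projections \(\Pi_{n'}(C^{\mathcal L})\) for \(n'\leq n\), and these are now identical. For the extended configurations at orders \(n'\geq n\), I would argue in two steps.

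\emph{Existence.} Show that the chain-rule extension of a prepared layout yields a cell \(C_{n'}\subseteq E_{n'}\) in which \(\Jet_{n'}(C)\) is dense. The construction is:
\begin{itemize}
\item each graph layer is lifted by \(\mathcal S\), using Lemma~\ref{Going up property} and Corollary~\ref{Iterated going up};
\item each band layer is followed by free bands \((-\infty,+\infty)\);
\item density over free bands follows from genericity, via Fact~\ref{Jet space is dense}, exactly as in Proposition~\ref{Complete configuration going up}(ii).
\end{itemize}
Preparedness is what makes this construction well defined: in each block, all bands precede all graphs, so every new coordinate is either free or a chain-rule image.

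\emph{Uniqueness.} Once existence is established, the closure argument of the previous paragraph applies verbatim at order \(n'\). Any two such extended cells, whichever minimal prepared layout they come from, contain \(\Jet_{n'}(C)\) densely and are cells in the same \(E_{n'}\). Hence they are equal. This gives independence of the complete configuration.

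The step I expect to be the main obstacle is the existence part in several blocks. There, \(\mathcal S\) applied to a graph layer in block \(i\) may involve a coordinate \(x_h^{(n_h+1)}\) from an earlier block \(h<i\), as in Lemma~\ref{First band detects missing earlier derivative}. The extension must therefore be carried out simultaneously across blocks, raising the orders of earlier blocks in step with later ones. One must then check that the substituted functions (as in the construction of the \(G_r\) in Lemma~\ref{Last bad block replacement}) remain continuous and sufficiently smooth, so that the graph-density argument of Lemma~\ref{Going up property} still applies. I would handle this by extending all blocks by one derivative at a time, with the lexicographic block order, and using the smoothness bound \(r(n)\) fixed before Definition~\ref{Definition of minimal source formula and layout}.
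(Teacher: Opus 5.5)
Your proof that \(C_\varphi^{\mathcal L}=C_\psi^{\mathcal L}\) is the paper's proof. The common dense subset \(\Jet_n(C)\) gives equal closures. The induction along the \(\rho_{i,j}\) then upgrades this to equality: graph versus band is decided by the fiber dimension of the closure, and graph functions and band boundaries are fixed by continuity. You are also right that minimality is used only to get \(\ord(\varphi)=\ord(\psi)\).

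For the extended configurations you take a different route. The paper argues by determinism. A cell determines its inductive presentation, so equal source cells give the same prepared layout. The extension rule (a free band after a band layer, \(\mathcal S\) after a graph layer, the latter pinned down by Corollary~\ref{Going up graph uniqueness}) is a function of the layout. So no density statement is needed. You instead characterize \(C_{n+r}\) as the unique cell of \(E_{n+r}\) in which \(\Jet_{n+r}(C)\) is dense, and rerun the closure argument. That is a stronger, construction-independent description, but it makes the density of realized jets in the extended cells load-bearing.

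Your sketch of that density is thin, and not where you locate the difficulty. Graph layers are fine once the preceding prefix is dense, by the argument of Lemma~\ref{Going up property}. The problem is the band layers. Fact~\ref{Jet space is dense} suffices in Proposition~\ref{Complete configuration going up}(ii) only because the whole cell is open there. In several blocks, \(C_{n+r}\) usually has graph layers, so density of \(\Jet(M^k)\) in \(E_{n+r}\) says nothing about \(\Jet_{n+r}(C)\) inside \(C_{n+r}\). Nor does density of prefix jets in a base by itself give density in a band over it.

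Already for \(C=\{(x,y):\delta y=x\}\) extended to order \((1,2)\), the cell is parametrized by the free coordinates \((x,x',y)\). Bringing \((x,\delta x,y)\) near an arbitrary target with \(\delta y=x\) requires choosing \(x\) first and then applying one-variable genericity in the fiber. In general you need the fiberwise induction used in the proof of Theorem~\ref{Projection and fibers of a delta cell is a delta cell}. Realize the earlier blocks first, then use genericity in the one-variable fiber, using continuity of the boundary functions to pass to a nearby realized base point.

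The paper asserts this density when it constructs \(\mathcal F_C^{n+r}\), but its proof of the theorem does not depend on it. So either supply the fiberwise argument, or replace your existence-plus-uniqueness step by the shorter determinism argument.
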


\begin{proof}
Set \(D:=C_\varphi^{\mathcal L}\) and \(D':=C_\psi^{\mathcal L}\). Both \(D\) and \(D'\) are o-minimal cells in \(E_n\), and since \(\varphi\) and \(\psi\) define the same set \(C\), we have
\[
D\cap\Jet_n(M^k)=D'\cap\Jet_n(M^k)=\Jet_n(C).
\]
By the definition of source cell, \(\Jet_n(C)\) is dense in both \(D\) and \(D'\), so \(\cl(D)=\cl(D')\).

We prove \(D=D'\) by induction through the coordinate construction of the cells. Write the coordinates of \(E_n\) blockwise as
\[
x_1^{(0)},\ldots,x_1^{(n_1)},\ldots,x_k^{(0)},\ldots,x_k^{(n_k)}.
\]
For each block \(i\) and each \(0\leq j\leq n_i+1\), let \(\rho_{i,j}\) be as in Definition~\ref{Definition of Layout}. We show, in the lexicographic order induced by this coordinate ordering, that \(\rho_{i,j}(D)=\rho_{i,j}(D')\).

The initial projection is clear. Suppose that the equality has been proved for the previous projection, and set \(B:=\rho_{i,j}(D)=\rho_{i,j}(D')\). The next projections \(\rho_{i,j+1}(D)\) and \(\rho_{i,j+1}(D')\) are cells over \(B\). Since \(\cl(D)=\cl(D')\), their closures over the common base agree. If one were a graph and the other a band, then the fibers over points of \(B\) would have different dimensions, contradicting equality of closures. Thus they are either both graphs or both bands.

If both are graphs, say \(\Gamma(f)\) and \(\Gamma(g)\) over \(B\), then equality of closures gives \(f=g\) on \(B\), since \(f\) and \(g\) are continuous. If both are bands, say \((f_0,f_1)_B\) and \((g_0,g_1)_B\), then equality of closures gives \(f_0=g_0\) and \(f_1=g_1\) on \(B\), again by continuity of the boundary functions. Hence the next layers agree. This completes the induction, and therefore \(D=D'\).

It follows immediately that the lower configurations agree, since for every \(m\leq n\) they are given by the same projection \(C_m=\Pi_m(D)=\Pi_m(D')\). It remains to check the extended configurations. Since the prepared layouts have the same source cell, the same graph and band layers occur at the last layer of each block. In a band layer, the extension adds a free derivative coordinate, so the next extended layer is uniquely determined. In a graph layer, the next extended layer is determined by applying the operator \(\mathcal S\), and uniqueness follows from Corollary~\ref{Going up graph uniqueness}. Repeating this argument gives equality of all extended configurations for which they are defined.
\end{proof}

From now on, when working with sufficiently smooth \(\delta\)-cells, we choose a minimal layout and, by Theorem~\ref{Minimal layouts are prepared}, regard it as prepared.

\begin{definition}\label{Definition of source layout}
Let \(C\) be a sufficiently smooth \(\delta\)-cell. A \emph{source layout} of \(C\) is a minimal prepared layout of \(C\). By Theorem~\ref{Minimal layouts are prepared}, every minimal layout arising from a sufficiently smooth source formula is prepared. By Theorem~\ref{Uniqueness of minimal source configurations}, the corresponding source cell is independent of the chosen minimal source formula. We denote this canonical source cell by \(C^{\mathcal L}\).
\end{definition}

We now record the complete configuration determined by a source layout. Let \(C\subseteq M^k\) be a sufficiently smooth \(\delta\)-cell, let \(\mathcal F_C\) be a source layout of order \(n=(n_1,\ldots,n_k)\), and write
\[
\mathcal F_C=
\begin{pmatrix}
\begin{pmatrix}
f_{1,0,0},f_{1,0,1}\\
\cdots\\
f_{1,n_1,0},f_{1,n_1,1}
\end{pmatrix},
\ldots,
\begin{pmatrix}
f_{k,0,0},f_{k,0,1}\\
\cdots\\
f_{k,n_k,0},f_{k,n_k,1}
\end{pmatrix}
\end{pmatrix}
\]
for its block presentation, and
\[
T(\mathcal F_C)=(t_{i,j})_{1\leq i\leq k,\ 0\leq j\leq n_i}
\]
for its type.

We first define the lower configurations. If \(m=(m_1,\ldots,m_k)\) satisfies \(0\leq m_i\leq n_i\) for all \(i\), set
\[
C_m:=\Pi_m(C^{\mathcal L})\subseteq E_m.
\]
Since \(\Jet_n(C)\) is dense in \(C^{\mathcal L}\), it follows that \(\Jet_m(C)\) is dense in \(C_m\).

We now extend the configuration upward. Suppose first that \(C\) is of class \(\mathcal C^1\). For each block \(1\leq i\leq k\), define the next pair \((f_{i,n_i+1,0},f_{i,n_i+1,1})\) as follows. If the last layer of the \(i^{\mathrm{th}}\) block is a graph, that is, if \(t_{i,n_i}=0\), set
\[
f_{i,n_i+1,0}=f_{i,n_i+1,1}:=\mathcal S(f_{i,n_i,0}).
\]
If the last layer is a band, that is, if \(t_{i,n_i}=1\), set
\[
f_{i,n_i+1,0}:=-\infty,\qquad f_{i,n_i+1,1}:=+\infty.
\]
This gives a new layout, denoted \(\mathcal F_C^{n+1}\), in the jet space of order \(n+1:=(n_1+1,\ldots,n_k+1)\). On realized jets, the graph case follows from the chain rule, while the band case adds a free derivative coordinate. Hence \(\Jet_{n+1}(C)\) is dense in the o-minimal cell defined by \(\mathcal F_C^{n+1}\).

More generally, if \(C\) is of class \(\mathcal C^p\), then this construction may be iterated for \(0\leq r\leq p\). We write \(\mathcal F_C^n:=\mathcal F_C\), and define \(\mathcal F_C^{n+r+1}\) from \(\mathcal F_C^{n+r}\) by the same rule. Let \(C_{n+r}\) be the o-minimal cell defined by \(\mathcal F_C^{n+r}\). Then \(\Jet_{n+r}(C)\) is dense in \(C_{n+r}\) for each \(0\leq r\leq p\), where \(n+r:=(n_1+r,\ldots,n_k+r)\).

\begin{definition}\label{Complete configuration in several variables}
Using the notation above, let \(C\) be a \(\mathcal C^p\) \(\delta\)-cell, where \(p\in\mathbb N\cup\{\infty\}\), and let \(\mathcal F_C\) be a source layout of order \(n=(n_1,\ldots,n_k)\). For each multi-index \(m=(m_1,\ldots,m_k)\) with \(0\leq m_i\leq n_i\) for all \(i\), the cell \(C_m:=\Pi_m(C^{\mathcal L})\) is called the \emph{\(m^{\mathrm{th}}\) lower configuration} of \(C\). For each \(0\leq r\leq p\), where \(n+r=(n_1+r,\ldots,n_k+r)\), the layout \(\mathcal F_C^{n+r}\) is called the \emph{\((n+r)^{\mathrm{th}}\) extended layout} of \(C\), and the cell \(C_{n+r}\) is called the \emph{\((n+r)^{\mathrm{th}}\) extended configuration} of \(C\). Together, the lower configurations \((C_m)_{m\leq n}\) and the extended configurations \((C_{n+r})_{0\leq r\leq p}\) are called the \emph{complete configuration} of \(C\).
\end{definition}

When \(k=1\), the lower and extended configurations defined above agree with the configurations introduced in the one-variable case.

We next record how source layouts behave under projections and fibers. The corresponding o-minimal cells are obtained by projecting the layout or by substituting a realized jet into its initial blocks. The point requiring proof is that the resulting cells still have dense realized jets.

\begin{theorem}\label{Projection and fibers of a delta cell is a delta cell}
Let \(C\subseteq M^k\) be a sufficiently smooth \(\delta\)-cell, and let \(\mathcal F_C\) be a source layout of order \(n=(n_1,\ldots,n_k)\), with canonical source cell \(C^{\mathcal L}\subseteq E_n\). Fix \(1\leq j<k\), and let \(a\in\Pi_j(C)\). Then:
\begin{itemize}
\item[(i)] the projection \(\Pi_j(C)\subseteq M^j\) is a sufficiently smooth \(\delta\)-cell, with source cell obtained by projecting \(C^{\mathcal L}\) onto its first \(j\) blocks;
\item[(ii)] the fiber \(C_a:=\{b\in M^{k-j}:(a,b)\in C\}\) is a sufficiently smooth \(\delta\)-cell, with source cell obtained by substituting \(\Jet_{(n_1,\ldots,n_j)}(a)\) into the last \(k-j\) blocks of the source layout \(\mathcal F_C\).
\end{itemize}
Moreover, the induced source layouts on the projection and the fiber are prepared.
\end{theorem}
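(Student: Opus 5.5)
For (i), I will take the cell \(P:=\rho_{j+1,0}(C^{\mathcal L})\subseteq E_{(n_1,\ldots,n_j)}\), which keeps the first \(j\) blocks. By the inductive definition of cells, \(P\) is an o-minimal cell, and it is \(\mathcal C^p\) whenever \(C^{\mathcal L}\) is. Let \(\theta_P\) be an \(\mathcal L(M)\)-formula defining \(P\), and set \(\varphi_P(x):=\theta_P(\Jet_{(n_1,\ldots,n_j)}(x))\). Two things have to be checked: that \(\varphi_P\) defines \(\Pi_j(C)\), and that the realized jets are dense in \(P\).

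The density goes exactly as in Lemma~\ref{Projection of source cell is a source cell}. If an open set meets \(P\) but misses \(\Jet_{(n_1,\ldots,n_j)}(\Pi_j(C))\), its cylinder is a nonempty relatively open subset of \(C^{\mathcal L}\) containing no realized jet, which is a contradiction.

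The identity \(\Jet(\Pi_j(C))=P\cap\Jet(M^j)\) is the first real point. The inclusion \(\subseteq\) is clear. For \(\supseteq\), take \(a\) with \(\Jet(a)\in P\); we need some \(b\) with \((a,b)\in C\). Here I will use preparedness. Over the fixed point \(\Jet(a)\), each later block \(i>j\) is a run of bands followed by graph layers. The bands are intervals varying continuously over the base. By the genericity axiom of \(T_g^\delta\), in the form of Fact~\ref{Jet space is dense} applied to definable open subsets of a fiber, I can choose \(b_i\) whose free jet coordinates land in these bands. The graph layers are then automatically satisfied: the source cell is minimal, and as in Lemmas~\ref{Factorization forces graph layer} and~\ref{Last bad block replacement}, the graph functions in block \(i\) express \(\delta^{n_i'}b_i\) through coordinates already present. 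This reduces to solving a differential equation \(\delta^{n_i'}y=g(\ldots)\) with \(y\) in an open cell of lower jets, which genericity provides.

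For (ii), I substitute the point \(\alpha:=\Jet_{(n_1,\ldots,n_j)}(a)\) into the layout. The functions \(f_{i,l,\epsilon}\) for \(i>j\) become functions of the remaining coordinates, and I call the resulting fiber cell \(C^{\mathcal L}(\alpha)\subseteq E_{(n_{j+1},\ldots,n_k)}\). It is a cell of the same type in the last \(k-j\) blocks, it is \(\mathcal C^p\), and membership of \(b\) in \(C_a\) is equivalent to \(\Jet(b)\in C^{\mathcal L}(\alpha)\). The substance is density of \(\Jet(C_a)\) in \(C^{\mathcal L}(\alpha)\), and this is exactly what fails for non-prepared or non-minimal layouts, as the example \(\delta y=x\) before Definition~\ref{Definition of Layout} shows. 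I will argue layer by layer through the last \(k-j\) blocks. At a band layer, a nonempty relatively open subset projects onto an open set in the free coordinate; by the inductive hypothesis and genericity in the fiber (Fact~\ref{Jet space is dense} applied to the \(\mathcal L(M\alpha)\)-definable open set), it contains a realized jet. At a graph layer, density propagates by continuity, as in Lemma~\ref{Going up property}. The key use of preparedness and minimality is that, in each block \(i>j\), every graph layer involves only coordinates of order at most \(n_h\) from earlier blocks; otherwise Lemma~\ref{Last bad block replacement} would lower the order. Consequently substituting \(\alpha\) leaves no dependence on unrealized higher derivatives of \(a\). Formalizing this last claim is the step I expect to be the main obstacle: I would first try to show that a graph layer depending essentially on \(x_h^{(n_h)}\) for \(h\le j\) is still compatible with the fiber, and fall back on re-running the reduction of Lemma~\ref{Last bad block replacement} inside the fiber if that fails.

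Finally, preparedness of the induced layouts is immediate. Projection keeps whole blocks, and substitution does not change the band/graph type of the layers in blocks \(i>j\), so each block remains a run of bands followed by graphs.
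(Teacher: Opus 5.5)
There is a genuine gap. It is the step you flag yourself, together with its twin in (i): the claim that, once the first graph layer of a block \(i>j\) has been solved by genericity, the later graph layers of that block are ``automatically satisfied''. Neither lemma you cite gives this.

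\begin{itemize}
\item Lemma~\ref{Factorization forces graph layer} presupposes that \(\mathcal S(f_{i,l})\) factors through \(\rho_{i,l+1}\), which is exactly what is at stake.
\item Lemma~\ref{Last bad block replacement} only acts on blocks that are not prepared.
\end{itemize}

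Your remark that graph functions involve only coordinates of order \(\leq n_h\) in earlier blocks is automatic from the definition of a layout, so it does not help. The real problem is that the chain-rule lift of a non-final graph layer may involve an absent coordinate \(x_h^{(n_h+1)}\) with \(h<i\). The next graph layer then encodes a hidden differential relation on the earlier blocks. Once \(a\) is fixed, this relation is an extra constraint on \(a\), not a consequence of the chain rule.

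This cannot be repaired, because the statement fails for minimal prepared layouts. Take
\[
C=\{(x,y)\in M^2:\delta y=x,\ \delta x=y^2\}.
\]

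\emph{A source formula.} The formula \(\delta y=x\wedge\delta^2y=y^2\) defines \(C\) and has order \((0,2)\). Its source set is \(D=\{(x,y,y',y''):y'=x,\ y''=y^2\}\). This is a Nash cell whose layout (bands in \(x\) and \(y\), then the graphs \(y'=x\) and \(y''=y^2\)) is prepared.

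\emph{Density.} Realized jets are dense in \(D\). For open intervals \(I,J\), genericity gives \(b\in J\) with \(\delta b\in I\) and \(\delta^2b=b^2\). Then \((\delta b,b)\in C\cap(I\times J)\).

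\emph{Minimality.} On \(C\) one has \(\delta x=y^2\) and \(\delta^2x=2xy\), and \(C\) is dense in \(M^2\). Consider a putative source cell of order \((m_1,m_2)\) with \(m_2\leq1\).
\begin{itemize}
\item If \(m_1\geq1\), its projection to the first three coordinates is a cell with closure \(\{(x,y^2,2xy)\}\) or \(\{(x,y^2,y)\}\). That set has two points over each \((x,x')\) with \(x\neq0<x'\). But such a cell must be the graph of a continuous function over an open subset of the \((x,x')\)-plane.
\item If \(m_1=0\) and \(m_2=0\), the source cell would be \(C\) itself. This is impossible, since \(C\) is dense and codense, hence not \(\mathcal L\)-definable.
\item If \(m_1=0\) and \(m_2=1\), the source cell must be the graph \(y'=x\) over an open \(B\subseteq M^2\). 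That formula defines \(\{(x,y)\in B:\delta y=x\}\supsetneq C\).
\end{itemize}
Hence \(C^{\mathcal L}=D\).

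\emph{Failure of (i) and (ii).} The projection of \(D\) to the first block is \(M\), while \(\Pi_1(C)\subseteq\{x:\delta x\geq0\}\neq M\); this contradicts (i). For \(a\in\Pi_1(C)\), the fiber \(C_a\subseteq\{\pm\sqrt{\delta a}\}\) is finite, while the substituted cell \(\{(y,a,y^2):y\in M\}\) is one-dimensional; this contradicts (ii). In your construction, genericity solves \(\delta b=a\). The layer \(y''=y^2\) then demands \(\delta a=b^2\), because the chain rule in the fiber gives \(y''=\delta a\).

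\emph{Comparison with the paper.} The paper's argument for (ii) has the same shape as yours: induction on \(k-j\), with genericity at band layers. It disposes of the graph layers by saying they are ``forced successively by the chain rule and Lemma~\ref{Going up graph uniqueness}''. That lemma, however, only identifies a graph layer once realized jets are already known to be dense in it. For (i), the paper appeals to Theorem~\ref{Refined delta decomposition theorem}, which concerns members of the decomposition built there, not an arbitrary \(\delta\)-cell with its source layout. So the paper does not supply the missing step either.

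\emph{What would make it work.} Your layer-by-layer argument does go through under an extra hypothesis. For example, require that in each block \(i>j\) the lift \(\mathcal S(f_{i,l})\) of every non-final graph layer involves no absent \(x_h^{(n_h+1)}\) with \(h<i\). For a minimal layout this amounts to each such block having at most one graph layer, since otherwise the top layer could be dropped. Minimality and preparedness alone do not provide this hypothesis.
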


\begin{proof}
The assertion for projections follows from the projection compatibility in Theorem~\ref{Refined delta decomposition theorem} and from the definition of the lower configurations. The source cell of \(\Pi_j(C)\) is the projection of \(C^{\mathcal L}\) onto its first \(j\) blocks, and the corresponding source layout is obtained by deleting the remaining blocks of \(\mathcal F_C\). Since \(\mathcal F_C\) is prepared, the projected layout is prepared.

We consider the fiber. Set
\[
D_a:=\{b\in E_{(n_{j+1},\ldots,n_k)}:(\Jet_{(n_1,\ldots,n_j)}(a),b)\in C^{\mathcal L}\}.
\]
Since \(a\in\Pi_j(C)\), the set \(D_a\) is a nonempty fiber of the o-minimal cell \(C^{\mathcal L}\), and hence is an o-minimal cell. For every \(b\in M^{k-j}\), we have \(b\in C_a\) if and only if \(\Jet_{(n_{j+1},\ldots,n_k)}(b)\in D_a\). Thus \(D_a\) is the natural candidate source cell for \(C_a\). Its layout is obtained by substituting the realized jet \(\Jet_{(n_1,\ldots,n_j)}(a)\) into the last \(k-j\) blocks of \(\mathcal F_C\), and it is prepared.

It remains to show that \(\Jet_{(n_{j+1},\ldots,n_k)}(C_a)\) is dense in \(D_a\). We prove this by induction on \(k-j\). If \(k-j=1\), then \(D_a\) has a single prepared block. Its band layers give open conditions on the corresponding finite jets, and density follows from genericity. Its graph layers are forced successively by the chain rule and Lemma~\ref{Going up graph uniqueness}. Hence the realized jets of \(C_a\) are dense in \(D_a\).

Assume the result for fibers in at most \(i\) variables, and suppose \(k-j=i+1\). Let \(U\subseteq D_a\) be a nonempty relatively open subset. After shrinking \(U\), we may assume that it is compatible with the last block of the layout. Let \(\widetilde D_a\) be the projection of \(D_a\) onto its first \(i\) remaining blocks. Then the projection of \(U\) to \(\widetilde D_a\) is a nonempty relatively open subset. By the inductive hypothesis, it contains a realized jet \(\Jet_{(n_{j+1},\ldots,n_{j+i})}(b)\), with \(b\in M^i\) and \((a,b)\in\Pi_{j+i}(C)\). The fiber of \(U\) over this realized jet is a nonempty relatively open subset of the one-block source cell of \(C_{(a,b)}\). By the one-variable case, this fiber contains a realized jet \(\Jet_{n_k}(c)\), with \(c\in C_{(a,b)}\). Therefore \(\Jet_{(n_{j+1},\ldots,n_k)}(b,c)\in U\). Since \(U\) was arbitrary, \(\Jet_{(n_{j+1},\ldots,n_k)}(C_a)\) is dense in \(D_a\).

Thus \(D_a\) is a source cell of \(C_a\). The required smoothness is inherited from the source layout \(\mathcal F_C\), and the induced layout remains prepared.
\end{proof}

The preceding results give the source configurations used in the sequel. For a sufficiently smooth \(\delta\)-cell, any chosen source layout determines the canonical source cell and its complete configuration. The lower configurations are obtained by coordinate projection, while the extended configurations are obtained by iterating the chain-rule construction. Moreover, projections and fibers over realized points inherit prepared source layouts, and their source cells are obtained by projecting the relevant blocks or by substituting the corresponding realized jets. This is the higher-dimensional analogue of the complete configuration constructed in the one-variable case.

\section{\texorpdfstring{\(\delta\)}{Delta}-Curves}

We now study curve selection for the \(\delta\)-topology. In this section we assume that \(T\) admits \(\mathcal C^\infty\)-cell decomposition. This assumption is used to avoid repeatedly refining source cells at each finite level. The arguments below are local on germs of curves and, at any fixed finite level, require only finite smoothness. Thus, in settings where only \(\mathcal C^p\)-cell decomposition is available, the same arguments may be applied after making sufficiently smooth local refinements for the finite level under consideration. Under the \(\mathcal C^\infty\)-cell decomposition assumption, every \(\delta\)-cell considered below is taken with a sufficiently smooth source layout, and all extended configurations are formed with respect to this chosen layout. Unless otherwise specified, closure means Euclidean closure.

The first point is that the \(\delta\)-closure of a \(\delta\)-cell can be tested on its finite configurations.

\begin{lemma}\label{Finite configurations characterize delta closure}
Let \(C\subseteq M^k\) be a \(\delta\)-cell of order \(n\in\mathbb N^k\), and let \(a\in M^k\). Recall that, for \(r\in\mathbb N\), the notation \(n+r\) means adding \(r\) to each coordinate of \(n\). Then \(a\in\cl_{n+r}(C)\) if and only if \(\Jet_{n+r}(a)\in\cl(\Jet_{n+r}(C))\) in \(E_{n+r}\). Consequently, \(a\in\cl_\delta(C)\) if and only if \(a\in\cl_{n+r}(C)\) for all \(r\in\mathbb N\). Equivalently, \(a\in\cl_\delta(C)\) if and only if \(\Jet_{n+r}(a)\in\cl(\Jet_{n+r}(C))\) for all \(r\in\mathbb N\).
\end{lemma}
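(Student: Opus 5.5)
The first assertion is essentially the definition of \(\mathcal T_{n+r}\) unwound, and I will prove it directly. By definition, \(\mathcal T_{n+r}\) is the topology on \(M^k\) induced by the metric \(d_{n+r}(a,b)=d_{(0,\ldots,0)}(\Jet_{n+r}(a),\Jet_{n+r}(b))\). Hence \(a\in\cl_{n+r}(C)\) if and only if for every \(\varepsilon>0\) there is \(c\in C\) with \(d_{(0,\ldots,0)}(\Jet_{n+r}(a),\Jet_{n+r}(c))<\varepsilon\). This says exactly that every Euclidean sup-metric ball around \(\Jet_{n+r}(a)\) in \(E_{n+r}\) meets \(\Jet_{n+r}(C)\), that is, \(\Jet_{n+r}(a)\in\cl(\Jet_{n+r}(C))\). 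This is the equivalence already noted after Fact~\ref{Finite jet characterization of delta topology}, specialised to the multi-index \(n+r\).

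For the second assertion, I will use that \(\mathcal T_\delta=\bigcup_{m\in\mathbb N^k}\mathcal T_m\) and that the family \(\{\mathcal T_m\}\) is increasing along the coordinatewise order. Suppose first that \(a\in\cl_\delta(C)\). Each \(\mathcal T_{n+r}\) is contained in \(\mathcal T_\delta\), so every \(\mathcal T_{n+r}\)-open neighbourhood of \(a\) is \(\delta\)-open and therefore meets \(C\). Hence \(a\in\cl_{n+r}(C)\) for all \(r\).

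Conversely, assume \(a\in\cl_{n+r}(C)\) for all \(r\in\mathbb N\), and let \(U\) be a \(\delta\)-open neighbourhood of \(a\). Since \(\mathcal T_\delta\) is the union of the \(\mathcal T_m\), we have \(U\in\mathcal T_m\) for some \(m\in\mathbb N^k\). Choose \(r\) with \(r\geq\max_i m_i\); then \(m\leq n+r\) coordinatewise. Because \(\mathcal T_m\subseteq\mathcal T_{n+r}\), the set \(U\) is \(\mathcal T_{n+r}\)-open. It contains \(a\in\cl_{n+r}(C)\), so \(U\cap C\neq\varnothing\). Thus \(a\in\cl_\delta(C)\). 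This cofinality step, that the diagonal multi-indices \(n+r\) are cofinal in \((\mathbb N^k,\leq)\), is the only point needing care. It is immediate, but it is where the directedness used to define \(\mathcal T_\delta\) enters.

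The final equivalence follows by combining the two preceding paragraphs. No appeal to the configuration machinery or to genericity is needed. The cells \(C_{n+r}\) enter only later, via the density statement \(\cl(\Jet_{n+r}(C))=\cl(C_{n+r})\), which lets one replace the right-hand side by the closure of the extended configuration \(C_{n+r}\).
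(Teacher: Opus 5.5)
Your proof is correct and follows the same route as the paper: the first equivalence comes from unwinding \(\mathcal T_{n+r}\) as the topology pulled back along \(\Jet_{n+r}\), and the second from the cofinality of the diagonal indices \(n+r\) in \((\mathbb N^k,\leq)\), which you make more explicit than the paper's one-line appeal to ``\(\mathcal T_\delta\) is generated by these refinements''. One harmless refinement: a union of an increasing chain of topologies is in general only a base, so instead of ``\(U\in\mathcal T_m\) for some \(m\)'' you should say that a \(\delta\)-open \(U\ni a\) contains some \(\mathcal T_m\)-open \(V\ni a\), after which your argument goes through verbatim.
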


\begin{proof}
By definition, the topology \(\mathcal T_{n+r}\) on \(M^k\) is induced by the embedding \(\Jet_{n+r}:M^k\to E_{n+r}\), where \(E_{n+r}\) is equipped with its Euclidean topology. Hence \(a\) lies in the \(\mathcal T_{n+r}\)-closure of \(C\) if and only if every Euclidean neighbourhood of \(\Jet_{n+r}(a)\) in \(E_{n+r}\) intersects \(\Jet_{n+r}(C)\). This is precisely the condition that \(\Jet_{n+r}(a)\in\cl(\Jet_{n+r}(C))\).

Since \(C\) has order \(n\), the finite refinements relevant to \(C\) are the topologies \(\mathcal T_{n+r}\), for \(r\in\mathbb N\), and \(\mathcal T_\delta\) is generated by these refinements. Therefore \(a\) belongs to the \(\delta\)-closure of \(C\) if and only if \(a\) belongs to the \(\mathcal T_{n+r}\)-closure of \(C\) for every \(r\in\mathbb N\). The final equivalence follows from the first paragraph.
\end{proof}

Thus, to understand the closure of a \(\delta\)-cell with respect to the \(\delta\)-topology, it is enough to understand the Euclidean closures of its successive finite configurations. In particular, if \(C\) has order \(n\), then
\[
\cl_\delta(C)=\bigcap_{r\in\mathbb N}\cl_{n+r}(C).
\]

It is tempting to conclude that the first closure in this descending chain already gives the \(\delta\)-closure, that is, whether \(\cl_n(C)=\cl_\delta(C)\) for a \(\delta\)-cell of order \(n\). The following example shows that this need not be the case.

\begin{example}\label{Example finite closure does not stabilize immediately}
Consider the \(\mathcal L^\delta\)-definable set \(C\subseteq M\) defined by
\[
(\delta x)^2-x=0\quad\text{and}\quad x>0.
\]
Then \(C\) has order \(1\), and \(0\in\cl_1(C)\). Indeed, in the first jet space, the source cell of \(C\) is the curve \(y^2=x\) with \(x>0\), whose Euclidean closure contains \((0,0)=\Jet_1(0)\). However, differentiating \((\delta x)^2=x\) gives \(2\delta x\delta^2x=\delta x\). Since \(x>0\) implies \(\delta x\neq0\) on \(C\), the set \(C\) is also defined by
\[
(\delta x)^2-x=0,\quad x>0,\quad\text{and}\quad \delta^2x=\frac12.
\]
Thus \(\Jet_2(C)\) is contained in the hyperplane on which the last coordinate is \(\frac12\), whereas \(\Jet_2(0)=(0,0,0)\). Hence \(0\notin\cl_2(C)\).
\end{example}

The example shows that higher configurations may impose new closure conditions which are invisible at the order of the original source formula. Thus a curve selection principle for the \(\delta\)-topology must control curves simultaneously through all finite configurations.

Before defining \(\delta\)-curves, we record why ordinary curve selection inside the underlying \(\mathcal L^\delta\)-definable set is not the right formulation.

\begin{proposition}\label{Non L definable one variable delta cells are dense and codense}
Let \(C\subseteq M\) be a sufficiently smooth \(\delta\)-cell which is not \(\mathcal L\)-definable. Then \(C\) is dense and codense in its Euclidean closure \(\cl(C)\).
\end{proposition}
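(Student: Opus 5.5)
The plan is to use the shape of the minimal source cell from Proposition~\ref{One variable source cell shape}. Let \(n\) be the order of a minimal source formula for \(C\). If \(n=0\), then \(C\) is a point or an open interval, and hence \(\mathcal L\)-definable, which is excluded. So \(n>0\). The same argument excludes the case where \(C^{\mathcal L}\) is open with \(n>0\) but effectively independent of the higher coordinates; that case is absorbed below. By Proposition~\ref{One variable source cell shape}, the cell \(Y_{n-1}:=\Pi_{n-1}(C^{\mathcal L})\) is an open cell in \(M^n\), and \(C^{\mathcal L}\) is either open in \(M^{n+1}\) or the graph of a continuous \(f\) over \(Y_{n-1}\). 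Write \(I:=\Pi_0(C^{\mathcal L})\), which is an open interval since \(n>0\). Then \(C\subseteq I\) and \(\cl(C)\subseteq\cl(I)\). Density of \(\Jet_0(C)=C\) in \(\Pi_0(C^{\mathcal L})=I\) follows from Lemma~\ref{Projection of source cell is a source cell}, applied repeatedly. Hence \(\cl(C)=\cl(I)\), and \(C\) is dense in \(\cl(C)\).

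Codensity is the real content. Take any open subinterval \(J\subseteq I\); we must find \(b\in J\setminus C\). Set \(W:=\Pi_0^{-1}(J)\cap Y_{n-1}\), a nonempty open subset of \(M^n\). By Fact~\ref{Jet space is dense}, the complement of \(C^{\mathcal L}\) must meet realized jets over \(J\). In the graph case, \(\bigl(W\times M\bigr)\setminus\Gamma(f)\) is a nonempty open subset of \(M^{n+1}\). So there is \(b\) with \(\Jet_n(b)\) in it, and in particular \(\Jet_{n-1}(b)\in Y_{n-1}\), \(b\in J\), and \(\Jet_n(b)\notin C^{\mathcal L}\). Since \(C=\Jet_n^{-1}(C^{\mathcal L})\), we get \(b\notin C\).

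In the open case, suppose toward a contradiction that \(J\subseteq C\). Then \(\Jet_n(b)\in C^{\mathcal L}\) for all \(b\in J\). We need an open set \(V\subseteq\Pi_0^{-1}(J)\subseteq M^{n+1}\) disjoint from \(C^{\mathcal L}\); by Fact~\ref{Jet space is dense} it would contain some \(\Jet_n(b)\) with \(b\in J\), giving a contradiction. If no such \(V\) exists, then \(C^{\mathcal L}\) is dense in \(\Pi_0^{-1}(J)\). Since \(C^{\mathcal L}\) is an open cell, its complement in \(\Pi_0^{-1}(J)\) is then nowhere dense. Then \(C\cap J=\Jet_n^{-1}(C^{\mathcal L})\cap J\) contains \(J\) minus the set \(\{b:\Jet_n(b)\in\partial C^{\mathcal L}\}\). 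This is where the main obstacle lies: we must show that this forces \(C\) to be \(\mathcal L\)-definable, contradicting the hypothesis.

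My approach to the obstacle is to run the codensity argument globally rather than on a single \(J\). By o-minimality, the set of \(x\in I\) whose fiber \((C^{\mathcal L})_x\) is not all of \(M^n\) up to a lower-dimensional set is \(\mathcal L\)-definable, so it is a finite union of intervals and points. Consider an interval \(J\) on which every fiber is full up to a lower-dimensional definable boundary. Repeat the minimality argument inductively on the layers \(Y_1,\ldots,Y_n\) over \(J\): each band layer whose boundary functions are \(\pm\infty\) can be deleted, which lowers the order. Over such a \(J\), the set \(C\cap J\) is then described by a formula of lower order, together with boundary conditions of lower dimension. Decomposing \(C\) into finitely many pieces accordingly and using minimality, either every piece has order \(0\) — so \(C\) is \(\mathcal L\)-definable, which is excluded — or on every subinterval some layer has a genuine finite boundary. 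In the latter case the complement contains a nonempty open set of jets over \(J\), and genericity via Fact~\ref{Jet space is dense} yields \(b\in J\setminus C\). Thus \(C\) is codense in \(\cl(C)=\cl(I)\).
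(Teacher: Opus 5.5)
Your density argument and your graph case of codensity are correct. In the graph case you are even more direct than the paper: you pick a realized jet in the open set $(W\times M)\setminus\Gamma(f)$ via Fact~\ref{Jet space is dense}. The gap is the open case, where the ``global'' argument you offer is not a proof, for three reasons.
\begin{enumerate}
\item Minimality is a property of source formulas for $C$ itself. Lowering the order of a formula describing a piece $C\cap J$ says nothing about $\ord(C)$, so ``decomposing $C$ and using minimality'' has no content.
\item The claim that any band layer with boundaries $\pm\infty$ can be deleted to lower the order is false for non-top layers. For example, $\{x:\delta^2x>\delta x\}$ has source cell $\{(x,y,z):z>y\}$, whose middle layer is a full band, yet its minimal order is $2$.
\item The leftover ``boundary conditions of lower dimension'', namely $\{b:\Jet_n(b)\in\partial C^{\mathcal L}\}$, need not be $\mathcal L$-definable (compare $\{b:\delta b=0\}$). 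So nothing forces the resulting description of $C\cap J$ to be $\mathcal L$-definable.
\end{enumerate}

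What you are missing is that the situation you try to exclude cannot arise, because $C^{\mathcal L}$ is a \emph{cell}. Its top layer is a band $(f,g)$ over $Y_{n-1}$, and each of $f,g$ is either identically infinite or a continuous definable function on all of $Y_{n-1}$. So no decomposition of $I$ is needed. If $(f,g)=(-\infty,+\infty)$, then $C=\Jet_{n-1}^{-1}(Y_{n-1})$, and Lemma~\ref{Projection of source cell is a source cell} gives a source formula of order $n-1$, contradicting minimality. This is the observation you gesture at in your first paragraph, but it has to be applied to the top layer. Hence one boundary, say $g$, is finite. Then for every subinterval $J\subseteq I$, the set $\{(x,y)\in W\times M: y>g(x)\}$ is a nonempty open subset of $M^{n+1}$ disjoint from $C^{\mathcal L}$. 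Fact~\ref{Jet space is dense} then gives $b\in J\setminus C$, exactly as in your graph case. This is the paper's argument. The paper phrases it by choosing a continuous $h>g$ and showing that $B=\{x:\Jet_{n-1}(x)\in Y_{n-1},\ \delta^n x=h(\Jet_{n-1}(x))\}$ is disjoint from $C$ and dense in $\cl(C)$.
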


\begin{proof}
Let \(n=\ord(C)\). Since \(C\) is not \(\mathcal L\)-definable, we have \(n>0\). By Proposition~\ref{One variable source cell shape}, either \(C^{\mathcal L}\) is open in \(M^{n+1}\), or \(C^{\mathcal L}=\Gamma(f)\) for some \(\mathcal L(M)\)-definable \(\mathcal C^1\)-function \(f:\Pi_{n-1}(C^{\mathcal L})\to M\), where \(\Pi_{n-1}(C^{\mathcal L})\) is an open cell in \(M^n\). In both cases, \(\Jet_n(C)\) is dense in \(C^{\mathcal L}\). Hence \(C=\Pi_0(\Jet_n(C))\) is dense in \(\Pi_0(C^{\mathcal L})\), and therefore dense in \(\cl(C)\).

We prove codensity. First suppose that \(C^{\mathcal L}\) is open in \(M^{n+1}\). Since \(C^{\mathcal L}\) is an open cell and \(n>0\), its final layer over \(\Pi_{n-1}(C^{\mathcal L})\) is a band \((f,g)_{\Pi_{n-1}(C^{\mathcal L})}\), where \(f<g\) are continuous \(\mathcal L(M)\)-definable boundary functions, allowing \(f=-\infty\) and \(g=+\infty\). By minimality of \(n=\ord(C)\), this final layer cannot be the full band \((-\infty,+\infty)_{\Pi_{n-1}(C^{\mathcal L})}\); otherwise the source formula would have order at most \(n-1\). Hence at least one of \(f,g\) is finite. Assume, for definiteness, that \(g\) is finite; the case where \(f\) is finite is analogous.

Choose a continuous \(\mathcal L(M)\)-definable function \(h:\Pi_{n-1}(C^{\mathcal L})\to M\) such that \(h>g\), and let \(D:=\Gamma(h)\). Then \(D\cap C^{\mathcal L}=\varnothing\), and \(D\) has the same base as the final layer of \(C^{\mathcal L}\). By Proposition~\ref{Complete configuration going up}, the graph \(D\) is a source cell for
\[
B:=\{x\in M:\Jet_{n-1}(x)\in\Pi_{n-1}(C^{\mathcal L})\text{ and }\delta^n x=h(\Jet_{n-1}(x))\}.
\]
Thus \(\Jet_n(B)\) is dense in \(D\). Since \(D\) and \(C^{\mathcal L}\) have the same base, \(B\) and \(C\) have the same Euclidean closure after projection to the first coordinate. Moreover \(B\cap C=\varnothing\). Hence \(C\) is codense in \(\cl(C)\).

Now suppose that \(C^{\mathcal L}=\Gamma(f)\). Choose an \(\mathcal L(M)\)-definable \(\mathcal C^1\)-function \(g:\Pi_{n-1}(C^{\mathcal L})\to M\) such that \(g>f\), and set
\[
B:=\{x\in M:\Jet_{n-1}(x)\in\Pi_{n-1}(C^{\mathcal L})\text{ and }\delta^n x=g(\Jet_{n-1}(x))\}.
\]
Then \(B\cap C=\varnothing\). By Proposition~\ref{Complete configuration going up}, \(\Gamma(g)\) is a source cell for \(B\), so \(\Jet_n(B)\) is dense in \(\Gamma(g)\). Since \(\Gamma(f)\) and \(\Gamma(g)\) have the same base, the sets \(B\) and \(C\) have the same Euclidean closure after projection to the first coordinate. Therefore \(B\) is dense in \(\cl(C)\), and \(C\) is codense in \(\cl(C)\).
\end{proof}

The preceding proposition shows that one cannot expect an ordinary curve selection statement inside an arbitrary \(\mathcal L^\delta\)-definable set. We therefore pass to sufficiently smooth \(\delta\)-cells and use their complete configurations. Let \(C\subseteq M^k\) be a \(\mathcal C^\infty\) \(\delta\)-cell of order \(n\in\mathbb N^k\), and let \((C_{n+r})_{r\in\mathbb N}\) be the extended configuration of \(C\). If \(a\in\cl_\delta(C)\), then by Lemma~\ref{Finite configurations characterize delta closure}, \(\Jet_{n+r}(a)\in\cl(C_{n+r})\) for every \(r\in\mathbb N\). Hence, by the o-minimal curve selection lemma, for each \(r\in\mathbb N\) there is an \(\mathcal L\)-definable continuous curve \(\gamma_r:(0,1)\to C_{n+r}\) such that \(\lim_{t\to0}\gamma_r(t)=\Jet_{n+r}(a)\).

Thus it is natural to select curves in the successive configurations of \(C\). The difficulty is that these configurations live in different ambient spaces, and the selected curves must be compatible under the natural projections between finite jet spaces.

\begin{definition}\label{Definition of delta curve}
Let \(C\subseteq M^k\) be a \(\mathcal C^\infty\) \(\delta\)-cell of order \(n\in\mathbb N^k\), and let \(a\in\cl_n(C)\). A \emph{\(\delta\)-curve} in \(C\) at \(a\) is a sequence \((\gamma_r)_{r\in\mathbb N}\) of continuous curves \(\gamma_r:(0,1)\to C_{n+r}\) such that \(\gamma_0\) converges to \(\Jet_n(a)\) and, for all \(r\leq s\), one has \(\Pi_{n+r}\circ\gamma_s=\gamma_r\). We say that \((\gamma_r)_{r\in\mathbb N}\) is \emph{definable} if each \(\gamma_r\) is \(\mathcal L\)-definable.

For \(\ell\in\mathbb N\), we say that \((\gamma_r)_{r\in\mathbb N}\) \emph{converges to \(a\) up to level \(\ell\)} if \(\lim_{t\to0}\gamma_r(t)=\Jet_{n+r}(a)\) for every \(0\leq r\leq \ell\). We say that \((\gamma_r)_{r\in\mathbb N}\) \emph{converges to \(a\)} if it converges to \(a\) up to level \(\ell\) for every \(\ell\in\mathbb N\).
\end{definition}

Fix a \(\mathcal C^\infty\) \(\delta\)-cell \(C\subseteq M^k\) of order \(n\in\mathbb N^k\), and let \((C_{n+r})_{r\in\mathbb N}\) be its extended configuration. By Lemma~\ref{Finite configurations characterize delta closure} and the o-minimal curve selection lemma, if \(a\in\cl_\delta(C)\), then for each \(r\in\mathbb N\) there exists an \(\mathcal L\)-definable continuous curve \(\sigma_r:(0,1)\to C_{n+r}\) such that \(\lim_{t\to0}\sigma_r(t)=\Jet_{n+r}(a)\). For each \(r\in\mathbb N\) and each \(0\leq s\leq r\), set \(\sigma_{r,s}:=\Pi_{n+s}\circ\sigma_r\). Then \(\sigma_{r,s}:(0,1)\to C_{n+s}\) and \(\lim_{t\to0}\sigma_{r,s}(t)=\Jet_{n+s}(a)\). Thus, for each finite level \(r\), there is a finite compatible system of curves witnessing convergence up to level \(r\). The remaining task is to replace these finite systems, as \(r\) varies, by a single definable \(\delta\)-curve whose canonical lifts converge to \(a\) at every finite level.

Given an \(\mathcal L\)-definable curve \(\gamma:(0,1)\to C_n\) such that \(\lim_{t\to0}\gamma(t)=\Jet_n(a)\), we now construct a compatible sequence of curves in the extended configuration of \(C\). Set \(\gamma_0:=\gamma\). Suppose that \(\gamma_r:(0,1)\to C_{n+r}\) has been constructed. Write
\[
\gamma_r=(\gamma_{r,1},\ldots,\gamma_{r,k}),
\]
where \(\gamma_{r,j}\) is the \(j^{\mathrm{th}}\) jet block of \(\gamma_r\), with coordinates up to order \(n_j+r\). We define \(\gamma_{r+1}\) block by block. Fix \(1\leq j\leq k\). If the next layer in the \(j^{\mathrm{th}}\) block of the extended configuration is a graph, say given by an \(\mathcal L(M)\)-definable function \(G_{j,r}\) on the corresponding lower configuration, then set
\[
\gamma_{r+1,j}:=(\gamma_{r,j},G_{j,r}\circ\gamma_r).
\]
If the next layer in the \(j^{\mathrm{th}}\) block is a band, then by construction of the extended configuration it is the full band, and we set
\[
\gamma_{r+1,j}:=(\gamma_{r,j},\delta^{n_j+r+1}a_j).
\]
Finally define \(\gamma_{r+1}:=(\gamma_{r+1,1},\ldots,\gamma_{r+1,k})\). Then \(\gamma_{r+1}:(0,1)\to C_{n+r+1}\) and \(\Pi_{n+r}\circ\gamma_{r+1}=\gamma_r\). Iterating this construction produces a compatible sequence \((\gamma_r)_{r\in\mathbb N}\).

\begin{definition}\label{Definition of canonical delta curve}
Using the notation above, the compatible sequence \((\gamma_r)_{r\in\mathbb N}\) constructed from \(\gamma\) is called the \emph{canonical \(\delta\)-curve} induced by \(\gamma\).
\end{definition}

The construction must be checked to land in the extended configuration at every level.

\begin{proposition}\label{Canonical delta curve is a delta curve}
Let \(C\subseteq M^k\) be a \(\mathcal C^\infty\) \(\delta\)-cell of order \(n\in\mathbb N^k\), let \(a\in\cl_n(C)\), and let \(\gamma:(0,1)\to C_n\) be a continuous curve such that \(\lim_{t\to0}\gamma(t)=\Jet_n(a)\). Then the canonical \(\delta\)-curve induced by \(\gamma\) is a \(\delta\)-curve in \(C\) at \(a\).
\end{proposition}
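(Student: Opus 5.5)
The plan is to verify the three requirements of Definition~\ref{Definition of delta curve} directly by induction on \(r\). For each \(r\) we need that \(\gamma_r\) is a continuous curve \((0,1)\to C_{n+r}\). Compatibility \(\Pi_{n+r}\circ\gamma_s=\gamma_r\) for \(r\leq s\) should reduce to the one-step identity \(\Pi_{n+r}\circ\gamma_{r+1}=\gamma_r\), since \(\Pi_{n+r,n+s}=\Pi_{n+r,n+r+1}\circ\cdots\circ\Pi_{n+s-1,n+s}\). Finally, convergence \(\lim_{t\to0}\gamma_0(t)=\Jet_n(a)\) holds by hypothesis because \(\gamma_0=\gamma\). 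The base case \(r=0\) is immediate, as \(\gamma_0=\gamma\) takes values in \(C_n\).

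For the inductive step, assume \(\gamma_r:(0,1)\to C_{n+r}\) is continuous. Then \(\gamma_{r+1}\) is assembled blockwise, and each block \(\gamma_{r+1,j}\) extends \(\gamma_{r,j}\) by one coordinate. Dropping these new coordinates therefore returns \(\gamma_r\), which gives \(\Pi_{n+r}\circ\gamma_{r+1}=\gamma_r\). Continuity comes from two facts. In the graph case, \(G_{j,r}\) is continuous: it is \(\mathcal S\) applied to a \(\mathcal C^\infty\) function, so it is \(\mathcal C^\infty\) by Lemma~\ref{Going up property} and the \(\mathcal C^\infty\) hypothesis on the source layout. In the band case the new coordinate is constant.

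The main point is membership in \(C_{n+r+1}\). Here I would use the inductive description of the extended layout \(\mathcal F_C^{n+r+1}\) given before Definition~\ref{Complete configuration in several variables}. The cell \(C_{n+r+1}\) is cut out layer by layer over \(C_{n+r}\). For each block \(j\), the new top layer is either the graph \(x_j^{(n_j+r+1)}=\mathcal S(f_{j,n_j+r,0})\), when the previous top layer of that block is a graph, or the full band \((-\infty,+\infty)\), when that layer is a band (preparedness makes all later layers of a block bands or graphs consistently).

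There is one subtlety to settle. The new layers are inserted into the block ordering, so the graph function for block \(j\) may depend on new top coordinates of earlier blocks \(h<j\), and not only on \(\gamma_r\). I would read \(G_{j,r}\) as evaluated at the partially assembled point (\(\gamma_r\) together with the already defined new coordinates of blocks \(h<j\)). Then, processing blocks \(j=1,\ldots,k\) in order, each prefix \(\rho_{j,n_j+r+2}\circ\gamma_{r+1}\) lies in the corresponding projection of \(C_{n+r+1}\). This holds by induction on \(j\): graph layers are satisfied by definition, and band layers impose no condition because they are full bands. This is the step I expect to need the most care, namely matching the blockwise recipe with the coordinate order of the layout. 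Once it is settled, \(\gamma_{r+1}(t)\in C_{n+r+1}\) for all \(t\), which completes the induction and shows that the canonical sequence is a \(\delta\)-curve in \(C\) at \(a\).
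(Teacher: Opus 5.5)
Your proposal is correct and follows the paper's proof: an induction on \(r\) with base case \(\gamma_0=\gamma\), membership in \(C_{n+r+1}\) via the graph/full-band dichotomy for the new top layer of each block, and compatibility because each block gains exactly one appended coordinate. You go slightly further than the paper in two places: since \(\mathcal S(f_{j,n_j+r,0})\) involves \(x_h^{(n_h+r+1)}\) for \(h<j\), the paper's formula \(G_{j,r}\circ\gamma_r\) only makes sense under your block-by-block reading at the partially assembled point, and your continuity check covers a point the paper leaves implicit.
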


\begin{proof}
We prove by induction on \(r\) that \(\gamma_r:(0,1)\to C_{n+r}\) and that \(\Pi_{n+s}\circ\gamma_r=\gamma_s\) for all \(0\leq s\leq r\). The case \(r=0\) holds by assumption, since \(\gamma_0=\gamma\) takes values in \(C_n\).

Suppose the claim holds for some \(r\). The curve \(\gamma_{r+1}\) is defined block by block from \(\gamma_r\). If the next layer in a given block is a graph, then the new coordinate is obtained by applying the defining function of that graph to \(\gamma_r\), so the resulting block lies in the graph layer of the extended configuration. If the next layer is a band, then by the construction of the extended configuration it is the full band, and the new coordinate is the constant function \(\delta^{n_j+r+1}a_j\), hence it lies in that band layer. Therefore \(\gamma_{r+1}\) takes values in \(C_{n+r+1}\).

Moreover, \(\gamma_{r+1}\) is obtained from \(\gamma_r\) by appending one new coordinate to each jet block. Hence \(\Pi_{n+r}\circ\gamma_{r+1}=\gamma_r\). Composing with the projections \(\Pi_{n+s}\) for \(s\leq r\) gives \(\Pi_{n+s}\circ\gamma_{r+1}=\gamma_s\). This completes the induction.
\end{proof}

The canonical lift is useful because convergence of an arbitrary compatible lift is equivalent to convergence of the canonical lift induced by its initial curve.

\begin{proposition}\label{Canonical delta curve detects convergence}
Let \(C\subseteq M^k\) be a \(\mathcal C^\infty\) \(\delta\)-cell of order \(n\in\mathbb N^k\), let \(a\in\cl_n(C)\), and let \((\sigma_r)_{r\in\mathbb N}\) be a \(\delta\)-curve in \(C\) at \(a\). Let \((\gamma_r)_{r\in\mathbb N}\) be the canonical \(\delta\)-curve induced by the initial curve \(\sigma_0\). Then \((\sigma_r)_{r\in\mathbb N}\) converges to \(a\) if and only if \((\gamma_r)_{r\in\mathbb N}\) converges to \(a\).
\end{proposition}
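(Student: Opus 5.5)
The plan is to prove both implications simultaneously by induction on the level \(r\), comparing \(\sigma_r\) and \(\gamma_r\) coordinate by coordinate using the layer structure of the extended layouts \(\mathcal F_C^{n+r}\). By Definition~\ref{Definition of canonical delta curve} and Proposition~\ref{Canonical delta curve is a delta curve}, \(\gamma_0=\sigma_0\), so the two curves agree at level \(0\). Both \(\sigma_{r+1}\) and \(\gamma_{r+1}\) take values in \(C_{n+r+1}\). Each lies over its own level-\(r\) curve, because \(\Pi_{n+r}\circ\sigma_{r+1}=\sigma_r\) and \(\Pi_{n+r}\circ\gamma_{r+1}=\gamma_r\). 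Hence passing from level \(r\) to level \(r+1\) adds exactly one new coordinate in each block \(j\), and the prepared form of the layout says whether that new layer is a graph layer or a full band layer.

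First I will treat graph layers. If the new layer in block \(j\) is the graph of \(G_{j,r}\), then membership in \(C_{n+r+1}\) forces the new coordinate of \(\sigma_{r+1}\) to equal \(G_{j,r}\circ\sigma_r\), and that of \(\gamma_{r+1}\) to equal \(G_{j,r}\circ\gamma_r\). So on graph coordinates both lifts are determined by the same \(\mathcal L(M)\)-definable functions applied to the lower-level curves.

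Next I will treat band layers. If the new layer is the full band, the new coordinate of \(\gamma_{r+1}\) is the constant \(\delta^{n_j+r+1}a_j\), which trivially converges to the correct jet coordinate of \(a\). The corresponding coordinate of \(\sigma_{r+1}\) is an arbitrary continuous function, constrained only by convergence of \(\sigma\) itself.

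For the direction ``\(\sigma\) converges \(\Rightarrow\) \(\gamma\) converges'', assume the graph coordinates of \(\gamma_r\) and \(\sigma_r\) both tend to \(\Jet_{n+r}(a)\). I then need the limit of \(G_{j,r}\) along \(\gamma_r\) to agree with its limit along \(\sigma_r\). Since \(\sigma_{r+1}\) converges, \(G_{j,r}\circ\sigma_r\) tends to \(\delta^{n_j+r+1}a_j\). What must be shown is that \(G_{j,r}\circ\gamma_r\) has the same limit. Here I would use o-minimal monotonicity to ensure that \(G_{j,r}\circ\gamma_r\) has a limit in \(M\cup\{\pm\infty\}\). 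I would then argue that the limit of \(G_{j,r}\) at \(\Jet_{n+r}(a)\) along curves in \(C_{n+r}\) is independent of the approaching curve. To do this, I first try to show that \(G_{j,r}\), being \(\mathcal C^\infty\) and obtained by iterating \(\mathcal S\), extends continuously to a neighbourhood of \(\Jet_{n+r}(a)\) in \(\cl(C_{n+r})\) whenever some curve realizes the correct limit. If that fails, I will fall back on the chain-rule expression of \(\mathcal S\): it is affine in the top derivative coordinates, and its coefficients are Jacobians of the lower layer functions.

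For the converse direction, the same argument shows that the graph coordinates of \(\sigma\) converge once those of \(\gamma\) do. The band coordinates of \(\sigma\), however, are not controlled by \(\gamma\) at all. Only the initial data of \(\sigma\), namely \(\sigma_0=\gamma_0\), ties it to \(\gamma\), and at band layers \(\sigma\) may be any continuous curve. So I expect this direction to require the reading that convergence is tested on the coordinates forced by the layout, or an additional hypothesis that the free coordinates of \(\sigma\) converge. I would state explicitly which convention is used before running the induction.

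The main obstacle is the curve-independence of the limit of \(G_{j,r}\) at the boundary point \(\Jet_{n+r}(a)\). The point \(a\) lies only in \(\cl_n(C)\), so \(G_{j,r}\) need not be defined there. Moreover, \(\gamma_r\) and \(\sigma_r\) differ on earlier free coordinates, so the two compositions are evaluated along genuinely different curves with the same endpoint.
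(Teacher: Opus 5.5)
Your proposal does not reach a proof. The forward implication rests on a curve-independence claim that you only hope to establish, and you (rightly) decline to prove the backward implication. Neither gap can be filled, because the statement is false in both directions, and your two doubts are exactly where it breaks.

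The paper's proof uses the same coordinatewise induction. For graph coordinates it treats \(G\) as continuous at \(\Jet_{n+r}(a)\) with value \(b_{r+1,j}(a)\). That is unjustified when \(\Jet_{n+r}(a)\) lies only in the closure of the domain. For band coordinates it uses a ``combining'' step, which actually shows only that convergence of \(\sigma\) is an extra condition that \(\gamma\) meets automatically.

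For the ``if'' direction, take \(C=M\). It has order \(0\), \(C_r=M^{r+1}\), and every new layer is a full band. Let \(\sigma_0(t)=a+t\) and \(\sigma_r(t)=(a+t,\delta a+1,\delta^2a,\ldots,\delta^ra)\) for \(r\geq1\). This is a \(\delta\)-curve at \(a\). Its canonical lift \((a+t,\delta a,\ldots,\delta^ra)\) converges, while \(\sigma_1\to(a,\delta a+1)\).

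For the ``only if'' direction, note that higher graph layers are affine in the new band coordinates of earlier blocks, and the coefficients can blow up at \(\Jet_{n+r}(a)\). Freezing those coordinates at their target values, as \(\gamma\) does, can then destroy convergence. Let \(C=\{(x,z,y):x>z^2,\ \delta y=\sqrt{x-z^2}\}\).
Its source cell \(\{x>z^2,\ y'=\sqrt{x-z^2}\}\), in the coordinates \((x,z,y,y')\), is a \(\mathcal C^\infty\) cell with dense realized jets.
The order \((0,0,1)\) is minimal: any smaller order has last entry \(0\), which would make the \(y\)-fibres \(\mathcal L\)-definable, but they are cosets of \(\ker\delta\).
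The layout is prepared, and the next graph layer is \(y''=(x'-2zz')/(2\sqrt{x-z^2})\), with \(x',z'\) free.

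Fix \(b\) with \(\delta b\neq0\), put \(a=(b^2,b,0)\), and let \(\sigma_0(t)=((b+t)^2+t^4,\,b+t,\,0,\,t^2)\).
The canonical lift sets \(x'=\delta(b^2)=2b\,\delta b\) and \(z'=\delta b\), so its \(y''\)-coordinate is \(-\delta b/t\), and \((\gamma_r)\) does not converge.
On the other hand, for \(i\geq1\) set \(z^{(i)}\equiv\delta^ib\), and let \(x^{(i)}(t)\) be the \(i\)-th formal derivative of \(z^2\) evaluated at \((b+t,\delta b,\ldots,\delta^ib)\).
Then every positive-order formal derivative of \(x-z^2\) vanishes along \(\sigma\). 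Hence \(y^{(i)}\equiv0\) for \(i\geq2\), and \((\sigma_r)\) converges to \(a\), so \(a\in\cl_\delta(C)\setminus C\).

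So the limit of \(G_{j,r}\) genuinely depends on the approaching curve, and neither your continuous-extension route nor the affine chain-rule fallback can rescue the step. What survives is limited. When \(\Jet_{n+r}(a)\) lies in the domain of the graph functions (for instance when \(a\in C\)), continuity gives convergence of \(\gamma\). The converse needs the extra hypothesis that the band coordinates of \(\sigma\) converge.
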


\begin{proof}
For each \(r\in\mathbb N\), let \(P(r)\) be the statement
\[
\lim_{t\to0}\sigma_r(t)=\Jet_{n+r}(a)
\quad\Longleftrightarrow\quad
\lim_{t\to0}\gamma_r(t)=\Jet_{n+r}(a).
\]
We prove \(P(r)\) by induction on \(r\). The case \(r=0\) is immediate, since \(\gamma_0=\sigma_0\).

Assume \(P(r)\). We prove \(P(r+1)\). Write
\[
\Jet_{n+r+1}(a)=\bigl(\Jet_{n+r}(a),b_{r+1}(a)\bigr),
\]
where \(b_{r+1}(a)\) denotes the tuple of the new jet coordinates added at level \(r+1\). Likewise write
\[
\sigma_{r+1}=(\sigma_r,\sigma'_{r+1})
\quad\text{and}\quad
\gamma_{r+1}=(\gamma_r,\gamma'_{r+1}).
\]
For each new coordinate, there are two cases.

If the corresponding layer of \(C_{n+r+1}\) over \(C_{n+r}\) is a graph, then there is an \(\mathcal L(M)\)-definable continuous function \(G\) on the relevant lower configuration such that, for both curves,
\[
\sigma'_{r+1,j}=G\circ\sigma_r,
\qquad
\gamma'_{r+1,j}=G\circ\gamma_r.
\]
Moreover, since \(\Jet_{n+r+1}(a)\in\cl(C_{n+r+1})\), the corresponding new coordinate of \(\Jet_{n+r+1}(a)\) is \(b_{r+1,j}(a)=G(\Jet_{n+r}(a))\). Hence convergence at level \(r\) implies convergence of this graph coordinate at level \(r+1\), for both \(\sigma\) and \(\gamma\), by continuity of \(G\).

If the corresponding layer is a full band, then the canonical lift is defined by \(\gamma'_{r+1,j}(t)=b_{r+1,j}(a)\) for all \(t\). Thus this coordinate of \(\gamma_{r+1}\) always converges to the correct coordinate of \(\Jet_{n+r+1}(a)\). For \(\sigma_{r+1}\), convergence to \(\Jet_{n+r+1}(a)\) is precisely the assertion that every such full-band coordinate also converges to its corresponding coordinate \(b_{r+1,j}(a)\).

Combining the graph and full-band coordinates, we see that convergence of \(\sigma_{r+1}\) to \(\Jet_{n+r+1}(a)\) is equivalent to convergence of \(\gamma_{r+1}\) to \(\Jet_{n+r+1}(a)\), using the induction hypothesis at level \(r\). Thus \(P(r+1)\) holds. Therefore \(P(r)\) holds for all \(r\), and the proposition follows.
\end{proof}

Thus it suffices to work with ordinary \(\mathcal L\)-definable curves in the source cell \(C^{\mathcal L}\). The goal is to find such a curve \(\gamma:(0,1)\to C^{\mathcal L}\) whose induced canonical \(\delta\)-curve converges to \(a\). Since this convergence condition depends only on the behavior of \(\gamma\) near \(a\), we pass to germs of curves.

\section{Germ Spaces}

Fix \(k\in\mathbb N\) and \(a\in M^k\). A \emph{curve at \(a\)} is a continuous function \(\gamma:(0,1)\to M^k\) such that \(\lim_{t\to0}\gamma(t)=a\). If \(\gamma\) and \(\sigma\) are two curves at \(a\), we write \(\gamma\sim\sigma\) if there exists \(\varepsilon>0\) such that \(B(a,\varepsilon)\cap\gamma((0,1))=B(a,\varepsilon)\cap\sigma((0,1))\). Equivalently, the images of \(\gamma\) and \(\sigma\) agree in some Euclidean neighbourhood of \(a\). We denote by \([\gamma]\) the equivalence class of \(\gamma\).

Let \(\mathcal K:=\mathcal H_{0^+}\) be the Hardy field of germs at \(0^+\) of \(\mathcal L(M)\)-definable unary functions. Let \(\mathcal G_a\) be the set of germs of \(\mathcal L\)-definable curves in \(M^k\) at \(a\), and omit the subscript \(a\) when it is clear from context. Let \(d:M^k\times M^k\to M\) be the sup metric, \(d(x,y)=\max_{i=1}^k |x_i-y_i|\). We define a \(\mathcal K\)-valued function \(d_{\mathcal G}:\mathcal G\times\mathcal G\to\mathcal K\) by
\[
d_{\mathcal G}([\gamma],[\sigma])=[f]_{\mathcal K},
\]
where, for all sufficiently small \(t>0\),
\[
f(t)=D\bigl(B(a,t)\cap\gamma((0,1)),B(a,t)\cap\sigma((0,1))\bigr),
\]
and \(D\) is the Hausdorff distance induced by \(d\).

\begin{remark}\label{Frontier representation of germ distance}
By the monotonicity theorem, after replacing \(\gamma\) and \(\sigma\) by equivalent representatives and shrinking the domain if necessary, we may assume that the functions \(d(\gamma(t),a)\) and \(d(\sigma(t),a)\) are strictly increasing as functions of \(t\) and tend to \(0\) as \(t\to0^+\). Thus, for all sufficiently small \(\varepsilon>0\), each image \(\gamma((0,1))\) and \(\sigma((0,1))\) intersects the frontier \(\partial B(a,\varepsilon)\) in exactly one point. Under this choice of representatives, the Hausdorff distance between the two curve germs inside \(B(a,\varepsilon)\) is represented by the distance between these frontier points.
\end{remark}

The preceding construction turns the set of definable curve germs at \(a\) into a metric object over the Hardy field \(\mathcal K\).

\begin{proposition}\label{Germ distance is a K metric}
The function \(d_{\mathcal G}\) is a well-defined \(\mathcal K\)-metric on \(\mathcal G\).
\end{proposition}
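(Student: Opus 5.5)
The plan is to check, in order, that \(d_{\mathcal G}\) is well defined on germs and takes values in \(\mathcal K\), and then to verify the metric axioms in the ordered field \(\mathcal K\).

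\textbf{Well-definedness.} For fixed representatives \(\gamma,\sigma\), the map
\[
t\mapsto f(t)=D\bigl(B(a,t)\cap\gamma((0,1)),\,B(a,t)\cap\sigma((0,1))\bigr)
\]
is \(\mathcal L(M)\)-definable, because the Hausdorff distance between uniformly definable families is first-order expressible (sup/inf of the definable metric \(d\)). So its germ at \(0^+\) lies in \(\mathcal K\); for small \(t\) both sets are nonempty and bounded, so \(f(t)\in M\). If \(\gamma\sim\gamma'\), then for some \(\varepsilon>0\) the images agree inside \(B(a,\varepsilon)\). Hence for all \(t<\varepsilon\) the sets \(B(a,t)\cap\gamma((0,1))\) and \(B(a,t)\cap\gamma'((0,1))\) coincide, the two functions \(f\) agree on \((0,\varepsilon)\), and their germs are equal. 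Nonnegativity and symmetry hold pointwise, so they pass to germs since the order on \(\mathcal K\) is the eventual order of representatives.

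\textbf{Triangle inequality.} For each small \(t\), the Hausdorff distance satisfies the triangle inequality on the three sets \(B(a,t)\cap\gamma\), \(B(a,t)\cap\sigma\), \(B(a,t)\cap\tau\). Thus \(f_{\gamma\tau}(t)\le f_{\gamma\sigma}(t)+f_{\sigma\tau}(t)\) holds for all small \(t\), which is the inequality in \(\mathcal K\).

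\textbf{Definiteness (the main obstacle).} I must show that \(d_{\mathcal G}([\gamma],[\sigma])=0\) forces \(\gamma\sim\sigma\). One subtlety: the Hausdorff distance on non-closed sets is only a pseudometric. Since the sets are bounded definable subsets of \(M^k\), I would replace them by their closures, which leaves \(D\) unchanged. The hypothesis \(f=0\) on some \((0,\varepsilon)\) then gives
\[
\cl\bigl(B(a,t)\cap\gamma((0,1))\bigr)=\cl\bigl(B(a,t)\cap\sigma((0,1))\bigr)\quad\text{for all } t<\varepsilon .
\]
By o-minimality, after shrinking \(\varepsilon\) and using Remark~\ref{Frontier representation of germ distance}, the curve images near \(a\) are injective continuous definable arcs whose images are locally closed. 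The closures then differ from the sets only at the point \(a\) and possibly boundary points on \(\partial B(a,t)\). Comparing at \(t/2\), or using a radius slightly smaller than \(t\), gives equality of the actual images in \(B(a,t/2)\), hence \(\gamma\sim\sigma\). The converse is immediate.

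The delicate point is justifying local closedness of the images and the behaviour at the sphere boundary. If the ball \(B(a,t)\) is taken open, this should follow from the monotone choice of representatives in Remark~\ref{Frontier representation of germ distance}: for small \(t\) the arc meets each sphere once, so \(B(a,t)\cap\gamma((0,1))\) is exactly \(\gamma((0,t'))\) for some \(t'\), whose closure adds only \(a\) and one frontier point.
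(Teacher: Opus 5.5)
Your route is the paper's: well-definedness, symmetry and the triangle inequality are checked pointwise for the Hausdorff distance and passed to germs, and definiteness is reduced to ``Hausdorff distance \(0\) means equal closures''. Your check that \(t\mapsto f(t)\) is \(\mathcal L(M)\)-definable is a detail the paper leaves implicit. The gap is in the last step of definiteness. It is the same gap hidden in the paper's sentence that two curve germs with the same local closure have the same local image.

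Equality of \(\cl(B(a,t)\cap\gamma((0,1)))\) and \(\cl(B(a,t)\cap\sigma((0,1)))\) only gives equality of the images on the punctured ball \(B(a,t)\setminus\{a\}\). You observe yourself that closures may add the point \(a\). But \(a\in B(a,t/2)\), and \(\sim\) compares images on the full ball. Concretely, let \(k=2\), \(a=(0,0)\), \(\gamma(t)=(t,0)\) and \(\sigma(t)=(t\,|1-2t|,0)\). Then \(\gamma((0,1))=(0,1)\times\{0\}\) and \(\sigma((0,1))=[0,1)\times\{0\}\). So the traces on every ball \(B(a,t)\) with \(t<1\) have Hausdorff distance \(0\), whereas \(a\in\sigma((0,1))\setminus\gamma((0,1))\) gives \(\gamma\not\sim\sigma\).

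Remark~\ref{Frontier representation of germ distance} cannot rescue this. Every representative of \([\sigma]\) has image containing \(a\), so none has \(d(\sigma(t),a)\) strictly increasing on its whole domain. Shrinking the domain changes \(\sigma((0,1))\) and hence the class. For the same reason, your identity \(B(a,t)\cap\gamma((0,1))=\gamma((0,t'))\) fails for curves that return to \(a\), for instance with \(\gamma(t)\to a\) also as \(t\to1\).

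So, with \(\sim\) as literally defined, \(d_{\mathcal G}\) is only a \(\mathcal K\)-valued pseudometric. The definition has to be adjusted, e.g.\ by requiring equality of images on \(B(a,\varepsilon)\setminus\{a\}\). After that change your closure argument works and needs no monotone normalization at all:
\begin{itemize}
\item Write \(\Gamma,\Sigma\) for the two images and \(U=B(a,t)\) for the open ball. Since \(U\cap\cl(U\cap X)=U\cap\cl(X)\) for any \(X\), equal closures of the traces give \(U\cap\cl(\Gamma)=U\cap\cl(\Sigma)\).
\item By o-minimal dimension theory, \(\dim(\cl(\Gamma)\setminus\Gamma)<\dim\Gamma\leq1\), so \(\Gamma\) and \(\Sigma\) have finite frontiers.
\item Hence for \(t\) small both sets are closed in \(U\setminus\{a\}\), whence \(\Gamma\cap(U\setminus\{a\})=\Sigma\cap(U\setminus\{a\})\).
\end{itemize}
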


\begin{proof}
We first show that \(d_{\mathcal G}\) is well-defined. Suppose that \(\gamma_1\sim\gamma_2\) and \(\sigma_1\sim\sigma_2\). Then, for all sufficiently small \(t>0\), the intersections of \(\gamma_1((0,1))\) and \(\gamma_2((0,1))\) with \(B(a,t)\) agree, and similarly for \(\sigma_1\) and \(\sigma_2\). Hence the corresponding Hausdorff-distance functions agree for all sufficiently small \(t>0\), and therefore define the same element of \(\mathcal K\).

Suppose next that \(d_{\mathcal G}([\gamma],[\sigma])=0\). Then, for all sufficiently small \(t>0\), the Hausdorff distance between \(B(a,t)\cap\gamma((0,1))\) and \(B(a,t)\cap\sigma((0,1))\) is zero. Hence these two definable curve germs have the same local closure near \(a\). By o-minimality, two definable curve germs at \(a\) with the same local closure have the same local image after shrinking. Therefore \(\gamma\sim\sigma\), and so \([\gamma]=[\sigma]\). The converse is immediate from the definition of \(\sim\).

Symmetry follows from symmetry of the Hausdorff distance. Finally, let \([\gamma_1]\), \([\gamma_2]\), and \([\gamma_3]\) be three curve germs. For sufficiently small \(t>0\), set \(A_i(t):=B(a,t)\cap\gamma_i((0,1))\) for \(i=1,2,3\). The triangle inequality for the Hausdorff distance gives
\[
D(A_1(t),A_3(t))\leq D(A_1(t),A_2(t))+D(A_2(t),A_3(t)).
\]
Passing to germs in \(\mathcal K\) gives the triangle inequality for \(d_{\mathcal G}\). Thus \(d_{\mathcal G}\) is a well-defined \(\mathcal K\)-metric on \(\mathcal G\).
\end{proof}

We next describe the local pieces of the germ space. The idea is to normalize a curve germ by choosing one coordinate which approaches \(a\) monotonically from one side, and then use that coordinate as the parameter.

For each \(i=1,\ldots,k\), define
\[
H_{a,i}^{+}:=\{b\in M^k:b_i>a_i\},
\qquad
H_{a,i}^{-}:=\{b\in M^k:b_i<a_i\}.
\]
Let \(\mathcal G_{a,i}^{+}\subseteq\mathcal G_a\) be the set of all equivalence classes of \(\mathcal L\)-definable curve germs at \(a\) whose representatives are eventually contained in \(H_{a,i}^{+}\), and define \(\mathcal G_{a,i}^{-}\) similarly. These sets cover \(\mathcal G_a\), since every nonconstant definable curve germ at \(a\) is eventually contained in one of the half spaces.

Let \(a=(a_1,\ldots,a_k)\in M^k\), and write \(a_{\widehat i}:=(a_1,\ldots,a_{i-1},a_{i+1},\ldots,a_k)\). Let \(\operatorname{ins}_i:M^{k-1}\times M\to M^k\) be the insertion map
\[
\operatorname{ins}_i(u,y):=(u_1,\ldots,u_{i-1},y,u_i,\ldots,u_{k-1}).
\]
For \([\gamma]\in\mathcal G_{a,i}^{+}\), by o-minimal monotonicity, after replacing \(\gamma\) by an equivalent representative and shrinking its domain, we may assume that the \(i^{\mathrm{th}}\) coordinate function \(\gamma_i\) is continuous, injective, and satisfies \(\gamma_i(t)>a_i\) for all \(t\). Set \(s=\gamma_i(t)-a_i\). Then \(\gamma\) has a unique normalized representative of the form
\[
\widehat\gamma(s)=\operatorname{ins}_i(u_\gamma(s),a_i+s),
\]
where \(u_\gamma:(0,\varepsilon)\to M^{k-1}\) is \(\mathcal L(M)\)-definable and converges to \(a_{\widehat i}\). Define
\[
\iota_{a,i}^{+}:\mathcal G_{a,i}^{+}\to\mathfrak m_{\mathcal K}^{k-1}
\]
by
\[
\iota_{a,i}^{+}([\gamma]):=[u_\gamma-a_{\widehat i}]_{\mathcal K}.
\]
Similarly, for \([\gamma]\in\mathcal G_{a,i}^{-}\), we use the normalization \(\widehat\gamma(s)=\operatorname{ins}_i(u_\gamma(s),a_i-s)\), and define \(\iota_{a,i}^{-}([\gamma]):=[u_\gamma-a_{\widehat i}]_{\mathcal K}\).

The following elementary lemma gives the uniform continuity estimate needed to compare the germ metric with the coordinate metric on \(\mathfrak m_{\mathcal K}^{k-1}\).

\begin{lemma}\label{Definable germs have moduli of continuity}
Let \(u:(0,\varepsilon)\to M^m\) be an \(\mathcal L(M)\)-definable continuous function such that \(\lim_{s\to0^+}u(s)\) exists in \(M^m\). Then there is an \(\mathcal L(M)\)-definable function \(\omega:(0,\varepsilon')\to M^{>0}\) such that \(\lim_{\eta\to0^+}\omega(\eta)=0\), and, for all sufficiently small \(s,s'>0\), if \(|s-s'|<\eta\), then \(d(u(s),u(s'))<\omega(\eta)\).
\end{lemma}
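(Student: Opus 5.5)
The idea is to extend \(u\) continuously to the closed interval and use uniform continuity there, made definable by taking a supremum. First, shrink the domain and set \(u(0):=\lim_{s\to0^+}u(s)\). Then \(u\) becomes a definable continuous function on \([0,\varepsilon_1]\) for some \(0<\varepsilon_1<\varepsilon\). The interval \([0,\varepsilon_1]\) is closed and bounded, and definable continuous functions on closed bounded definable sets are uniformly continuous in o-minimal structures; this is standard, see \cite{vandenDries2003}. So for every \(\epsilon>0\) there is \(\eta>0\) such that \(|s-s'|<\eta\) with \(s,s'\in[0,\varepsilon_1]\) implies \(d(u(s),u(s'))<\epsilon\).

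Next define
\[
\omega_0(\eta):=\sup\{d(u(s),u(s')):s,s'\in[0,\varepsilon_1],\ |s-s'|\leq\eta\}
\]
for \(0<\eta<\varepsilon_1\). This supremum exists because \(u\) is bounded on \([0,\varepsilon_1]\), since the continuous image of a closed bounded definable set is closed and bounded. The function \(\omega_0\) is \(\mathcal L(M)\)-definable, since suprema of bounded definable sets exist in an o-minimal expansion of a field. It is nondecreasing and nonnegative. Uniform continuity gives \(\lim_{\eta\to0^+}\omega_0(\eta)=0\): for each \(\epsilon\) the suprema are \(\leq\epsilon\) for small \(\eta\).

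Finally, to obtain the strict inequality and positivity, set \(\omega(\eta):=\omega_0(\eta)+\eta\) on \((0,\varepsilon')\) with \(\varepsilon':=\varepsilon_1\). Then \(\omega>0\), \(\omega\) is definable, and \(\omega(\eta)\to0\). If \(s,s'\in(0,\varepsilon_1)\) with \(|s-s'|<\eta\), then \(d(u(s),u(s'))\leq\omega_0(\eta)<\omega(\eta)\).

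The only step needing care is the uniform continuity. If one prefers not to cite it, it can be proved directly. Suppose it fails. Then the definable set of \(\epsilon\) witnessing failure is nonempty. Definable choice yields, for a fixed bad \(\epsilon\), definable sequences-as-curves \(s(\eta),s'(\eta)\) with \(|s(\eta)-s'(\eta)|\leq\eta\) and \(d(u(s(\eta)),u(s'(\eta)))\geq\epsilon\). Monotonicity gives limits \(s(\eta)\to s_0\), and also \(s'(\eta)\to s_0\), as \(\eta\to0^+\). Continuity of \(u\) at \(s_0\in[0,\varepsilon_1]\), which includes \(s_0=0\) by the existence of the limit, then gives a contradiction.
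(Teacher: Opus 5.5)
Your proof is correct and follows the same route as the paper: define \(\omega\) as the supremum of \(d(u(s),u(s'))\) over pairs with \(|s-s'|\) at most \(\eta\), after extending \(u\) continuously to \(0\). Your version is more careful than the paper's in two places. First, you justify \(\omega(\eta)\to0\) by uniform continuity on a closed interval \([0,\varepsilon_1]\), whereas the paper appeals only to continuity at \(0\). Second, the added \(+\eta\) guarantees \(\omega>0\) and the strict inequality, which the paper's plain supremum does not (for example, when \(u\) is constant it gives \(\omega=0\)).
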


\begin{proof}
For sufficiently small \(\eta>0\), define
\[
\omega(\eta):=\sup\{d(u(s),u(s')):0<s,s'<\varepsilon,\ |s-s'|<\eta\}.
\]
After shrinking \(\varepsilon\), this supremum exists in \(M\), because \(u\) has a limit at \(0\) and is bounded near \(0\). By definable completeness of o-minimal expansions of real closed fields, \(\omega\) is \(\mathcal L(M)\)-definable. Since \(u\) extends continuously to \(0\) by setting \(u(0)=\lim_{s\to0^+}u(s)\), continuity at \(0\) gives \(\omega(\eta)\to0\) as \(\eta\to0^+\).
\end{proof}

\begin{proposition}\label{Charts are Euclidean spaces over the Hardy field}
The map \(\iota_{a,i}^{+}\) is a homeomorphism from \(\mathcal G_{a,i}^{+}\), equipped with the subspace topology induced by the \(\mathcal K\)-metric \(d_{\mathcal G}\) on \(\mathcal G_a\), onto \(\mathfrak m_{\mathcal K}^{k-1}\), equipped with the sup metric \(d_K(x,y)=\max_{1\leq j\leq k-1}|x_j-y_j|\). The analogous statement holds for \(\iota_{a,i}^{-}\).
\end{proposition}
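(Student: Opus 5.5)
Our plan is to verify bijectivity of \(\iota_{a,i}^{+}\) directly, and then compare the two metrics by two-sided \(\mathcal K\)-valued estimates. For well-definedness and injectivity, we use that the normalized representative \(\widehat\gamma\) depends only on the local image of \(\gamma\). Indeed, two equivalent curves have the same image near \(a\), and in \(H_{a,i}^{+}\) this image is the graph of \(s\mapsto u_\gamma(s)\) over the \(i^{\mathrm{th}}\) coordinate. Hence \(u_\gamma\) is determined as a germ at \(0^+\). Conversely, equal germs \(u_\gamma=u_\sigma\) give equal images near \(a\). For surjectivity, given \(x=[v]\in\mathfrak m_{\mathcal K}^{k-1}\), we choose a definable representative \(v\) on some \((0,\varepsilon)\) with \(v(s)\to0\); this is possible since \(x\) is infinitesimal. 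By o-minimality, after shrinking we may take \(v\) continuous. The curve \(s\mapsto\operatorname{ins}_i(a_{\widehat i}+v(s),a_i+s)\), reparametrized on \((0,1)\), is then a definable curve at \(a\) lying in \(H_{a,i}^{+}\), and it maps to \(x\).

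For the topology, fix \([\gamma],[\sigma]\) with normalized data \(u=u_\gamma\), \(w=u_\sigma\). We aim to show that \(d_{\mathcal G}\) and \(d_K\) define the same neighbourhood filters, by proving two estimates in \(\mathcal K\).

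The first estimate is \(d_{\mathcal G}([\gamma],[\sigma])\ge c\, d_K(\iota(\gamma),\iota(\sigma))\) up to a reparametrization of the germ variable. Since \(d_{\mathcal G}\) is measured in the radius variable \(t\) while \(d_K\) is measured in the coordinate variable \(s\), we relate the two variables. If \(\gamma\) is normalized, then \(d(\widehat\gamma(s),a)=\max(s,|u(s)-a_{\widehat i}|)=:\tau_\gamma(s)\), which is eventually increasing and bounded below by \(s\). By Remark~\ref{Frontier representation of germ distance}, the Hausdorff distance at radius \(t\) is comparable to the distance between the frontier points \(\widehat\gamma(\tau_\gamma^{-1}(t))\) and \(\widehat\sigma(\tau_\sigma^{-1}(t))\). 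Any point of one curve is at distance at least \(\|u(s)-w(s)\|\) from the other curve, up to the modulus of continuity in the \(i^{\mathrm{th}}\) coordinate. This should bound \(d_K\) in terms of \(d_{\mathcal G}\).

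The second estimate is \(d_{\mathcal G}\le \|u-w\|+\omega(\cdot)\). It comes from pairing the points \(\widehat\gamma(s)\) and \(\widehat\sigma(s)\), which have equal \(i^{\mathrm{th}}\) coordinate, and using Lemma~\ref{Definable germs have moduli of continuity} to control the mismatch in the ball radii \(\tau_\gamma(s)\ne\tau_\sigma(s)\). Both the forward and the inverse map then send \(\epsilon\)-balls, for \(\epsilon\in\mathcal K^{>0}\), into \(\epsilon'\)-balls, where \(\epsilon'\) is an explicit definable function of \(\epsilon\) obtained by composing with \(\omega\) and with \(\tau^{\pm1}\).

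The main obstacle is this comparison between the radius parameter \(t\) and the coordinate parameter \(s\). The elements of \(\mathcal K\) being compared live in different variables, and the composition \(\tau_\gamma^{-1}\) depends on the base point \([\gamma]\). We handle this as follows. Continuity at a fixed \([\gamma]\) only needs: for every \(\epsilon\in\mathcal K^{>0}\) there is \(\eta\in\mathcal K^{>0}\) that works for all \([\sigma]\) in a neighbourhood. For this we restrict to \([\sigma]\) with \(d_K<\eta\), where we choose \(\eta\le\epsilon\) and \(\eta\) small relative to \(s\). Then \(\tau_\sigma\) and \(\tau_\gamma\) agree up to \(\eta\), and the frontier points can be compared directly using monotonicity. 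The case \(\iota_{a,i}^{-}\) follows by the symmetric substitution \(s\mapsto a_i-s\).
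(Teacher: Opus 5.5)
Your proof is correct in outline but takes a genuinely different route from the paper's. Your bijectivity argument is the paper's, with well-definedness and injectivity spelled out; the difference is in the topology.

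\textbf{How the paper argues.} It reduces, one omitted coordinate at a time, to \(k=2\), \(i=1\). It handles \((\iota_{a,1}^{+})^{-1}\) by Lemma~\ref{Definable germs have moduli of continuity}, with the conclusion phrased only as ``infinitesimal implies infinitesimal''. It handles \(\iota_{a,1}^{+}\) by splitting \(u\) into cases (constant; convex or shallow affine; concave or steep affine). Each case uses explicit comparison germs \(h\) and \(\eta:=h-u\), with the steep case treated in the inverse parameter.

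\textbf{How you argue.} You stay in \(M^k\) and replace the Hausdorff distance by the distance between the frontier points \(p_t=\widehat\gamma(\tau_\gamma^{-1}(t))\) and \(q_t=\widehat\sigma(\tau_\sigma^{-1}(t))\). You then move between the radius variable \(t\) and the coordinate variable \(s\) via \(\tau^{\pm1}\) and moduli of continuity.

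\textbf{What each buys.} The paper gets concrete separating germs for planar graphs. However, its reduction to \(k=2\) needs an argument, since the truncation by \(B(a,t)\) and the Hausdorff distance couple all coordinates. It also leaves the \(t\)-versus-\(s\) bookkeeping largely implicit. Your route is dimension-free, gives genuine \(\epsilon\)--\(\eta\) statements, and makes that bookkeeping explicit.

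\textbf{The frontier comparison.} Your route depends on this, and you should prove it rather than cite Remark~\ref{Frontier representation of germ distance}: the Remark gives no proof, and the comparison holds only up to a factor. Let \(D(t)\) be the Hausdorff distance. Then \(D(t)\leq d(p_t,q_t)\) eventually, because \(t\mapsto d(p_t,q_t)\) is definable, positive and tends to \(0\), hence eventually increasing. Conversely, \(d(p_t,q_t)\leq(2+o(1))D(t)\), because along a definable curve parametrized by radius each coordinate is asymptotically \(1\)-Lipschitz.

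\textbf{First point to tighten: the first estimate cannot be linear.} After the natural change of variable it fails. Take \(a=0\), \(k=2\), \(i=1\), \(u(s)=s^{1/3}\) and \(w(s)=s^{1/3}+s\). Then \(d_{\mathcal G}\approx[3t^5]\), whereas \(d_K\) read at \(s=\tau_\gamma^{-1}(t)=t^3\) is \(t^3\). So the modulus correction you mention is essential, not a lower-order term. What holds is
\(\|u(s)-w(s)\|\leq 3D(t)+\omega_u(3D(t))\) for \(s=\tau_\sigma^{-1}(t)\).
Here \(\omega_u\) must be the modulus of the base curve, so that the bound is uniform in \([\sigma]\).

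\textbf{Second point to tighten: your uniformity paragraph covers only the inverse map.} It assumes \(d_K<\eta\). For \(\iota_{a,i}^{+}\) itself, given \(\rho\in\mathcal K^{>0}\) for \(d_K\), you know only \(d_{\mathcal G}<\eta\). So the comparison of \(\tau_\sigma\) with \(\tau_\gamma\) must be extracted from that hypothesis. The \(i^{\mathrm{th}}\) coordinates of \(p_t\) and \(q_t\) give \(|\tau_\gamma^{-1}(t)-\tau_\sigma^{-1}(t)|\leq 3\eta(t)\). Imposing \(\eta(t)\leq\tau_\gamma^{-1}(t)/6\) then gives \(\tau_\sigma(s)\leq\tau_\gamma(2s)\). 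After that, any such \(\eta\) satisfying \(3\eta(t)+\omega_u(3\eta(t))<\rho(\tau_\gamma^{-1}(t)/2)\) works.
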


\begin{proof}
We prove the statement for \(\iota_{a,i}^{+}\); the proof for \(\iota_{a,i}^{-}\) is analogous. By the normalization above, every germ in \(\mathcal G_{a,i}^{+}\) has a unique normalized representative of the form
\[
\widehat\gamma(s)=\operatorname{ins}_i(u_\gamma(s),a_i+s),
\]
where \(u_\gamma:(0,\varepsilon)\to M^{k-1}\) is \(\mathcal L(M)\)-definable and converges to \(a_{\widehat i}\). The map \(\iota_{a,i}^{+}\) sends \([\gamma]\) to \([u_\gamma-a_{\widehat i}]_{\mathcal K}\in\mathfrak m_{\mathcal K}^{k-1}\). Conversely, every element \(w\in\mathfrak m_{\mathcal K}^{k-1}\) is represented by a definable function \(w(s)\) with \(w(s)\to0\), and the curve \(s\mapsto\operatorname{ins}_i(a_{\widehat i}+w(s),a_i+s)\) defines an element of \(\mathcal G_{a,i}^{+}\) mapping to \(w\). Hence \(\iota_{a,i}^{+}\) is bijective.

It remains to compare the topology induced by \(d_{\mathcal G}\) with the sup metric topology on \(\mathfrak m_{\mathcal K}^{k-1}\). Since both metrics are sup metrics, it is enough to compare one omitted coordinate at a time. Thus we reduce to the case \(k=2\) and \(i=1\). After translating \(a\) to \((0,0)\), normalized representatives have the form
\[
\widehat\gamma(s)=(s,u(s)),\qquad \widehat\sigma(s)=(s,v(s)),
\]
where \(u,v:(0,\varepsilon)\to M\) are \(\mathcal L(M)\)-definable and tend to \(0\) as \(s\to0^+\).

We first prove continuity of \((\iota_{a,1}^{+})^{-1}\). Let \([\rho]\in\mathcal K^{>0}\) be a positive infinitesimal germ and suppose that \(|u-v|<\rho\) as germs. By Lemma~\ref{Definable germs have moduli of continuity}, the graphs of \(u\) and \(v\), truncated by sufficiently small sup-balls around \(0\), remain within a positive infinitesimal Hausdorff distance of each other. Hence \(d_{\mathcal G}([\gamma],[\sigma])\) is infinitesimal whenever \(d_K(\iota_{a,1}^{+}([\gamma]),\iota_{a,1}^{+}([\sigma]))\) is infinitesimal. This proves continuity of \((\iota_{a,1}^{+})^{-1}\).

Conversely, we prove continuity of \(\iota_{a,1}^{+}\). Let \([\rho]\in\mathcal K^{>0}\) be a positive infinitesimal germ. We must find a positive infinitesimal germ \([\eta]\) such that \(d_{\mathcal G}([\gamma],[\sigma])<[\eta]\) implies \(|u-v|<\rho\) as germs. By o-minimality, after shrinking the domain, \(u\) is either constant or strictly monotone; in the latter case, after applying \(\mathcal C^2\)-cell decomposition, it is affine, strictly convex, or strictly concave. If \(u\) is constant, the claim is immediate.

Suppose first that \(u\) is strictly convex, or affine with derivative eventually of absolute value at most \(1\). Define
\[
h(s):=\max\left\{s+\frac{u(s)}2,\ u(s)+\frac{\rho(s)}2\right\}.
\]
After shrinking the domain, \(h(s)>u(s)\) and \(h(s)-u(s)\) is a positive infinitesimal germ. Set \([\eta]:=[h-u]_{\mathcal K}\). If \(d_{\mathcal G}([\gamma],[\sigma])<[\eta]\), then the graph of \(v\) cannot leave the vertical strip \(u(s)-\rho(s)<v(s)<u(s)+\rho(s)\) as a germ; otherwise convexity, or the affine slope bound, would produce a separation of at least \(h(s)-u(s)\) inside a sufficiently small sup-ball. Hence \(|u-v|<\rho\) as germs.

Now suppose that \(u\) is strictly concave, or affine with derivative eventually of absolute value greater than \(1\). Since \(u\) is strictly monotone, it has a definable inverse near \(0\). Define \(h\) by
\[
h^{-1}(s):=u^{-1}(s)+\frac{\rho(s)}2
\]
on a sufficiently small interval. Again \(h(s)>u(s)\) and \(h(s)-u(s)\) is a positive infinitesimal germ. With \([\eta]:=[h-u]_{\mathcal K}\), the same argument in the inverse parameter shows that \(d_{\mathcal G}([\gamma],[\sigma])<[\eta]\) implies \(|u-v|<\rho\) as germs.

Thus \(\iota_{a,1}^{+}\) and its inverse are continuous. Therefore \(\iota_{a,i}^{+}\) is a homeomorphism. The proof for \(\iota_{a,i}^{-}\) is the same, using normalized representatives of the form \(\widehat\gamma(s)=\operatorname{ins}_i(u_\gamma(s),a_i-s)\).
\end{proof}

The preceding proposition shows that the pieces \(\mathcal G_{a,i}^{+}\) and \(\mathcal G_{a,i}^{-}\) give local coordinates on the germ space. If two such pieces overlap, the corresponding coordinate changes are induced by definable reparameterizations of curve germs.

Let \(\mathcal G_{a,i}^{\star}\) and \(\mathcal G_{a,j}^{\diamond}\), with \(\star,\diamond\in\{+,-\}\), have nonempty intersection. On this overlap, define
\[
\tau_{i,j}^{\star,\diamond}
:=
\iota_{a,j}^{\diamond}\circ(\iota_{a,i}^{\star})^{-1}.
\]
The map \(\tau_{i,j}^{\star,\diamond}\) is induced by reparameterizing the same definable curve germ using two different monotone coordinates.

By Proposition~\ref{Charts are Euclidean spaces over the Hardy field}, each map \(\iota_{a,i}^{+}\) and \(\iota_{a,i}^{-}\) identifies its domain with \(\mathfrak m_{\mathcal K}^{k-1}\). Therefore the family
\[
\mathcal H_a:=
\left\{
(\mathcal G_{a,i}^{+},\iota_{a,i}^{+}),
(\mathcal G_{a,i}^{-},\iota_{a,i}^{-})
:1\leq i\leq k
\right\}
\]
is an atlas on \(\mathcal G_a\). Its elements are the charts of \(\mathcal G_a\), and the maps \(\tau_{i,j}^{\star,\diamond}\) are the transition maps. Thus \(\mathcal G_a\) is a \(K\)-manifold modeled on \(\mathfrak m_{\mathcal K}^{k-1}\).

We shall also use the following global direction map. Define
\[
\iota:\mathcal G_a\to\mathbb P^{k-1}(K)
\]
as follows. On \(\mathcal G_{a,i}^{+}\), let \(\iota([\gamma])\) be the projective class of the germ
\[
\widehat\gamma^{i,+}(s)-a\in K^k,
\]
and define it similarly on \(\mathcal G_{a,i}^{-}\), using the normalized representative from the negative side. This is well-defined on overlaps, since different normalizations are obtained from one another by definable reparameterizations and hence determine the same projective germ direction.

Let \(A\subseteq M^k\) be an \(\mathcal L\)-definable set such that \(a\in\cl(A)\). Suppose \(A_{a,i}^{+}:=A\cap H_{a,i}^{+}\) is nonempty, and let
\[
\mathcal A_{a,i}^{+}:=
\{[\gamma]\in\mathcal G_{a,i}^{+}:\gamma\text{ is eventually contained in }A\}.
\]
Let \(\epsilon:=[s\mapsto s]_{\mathcal K}\in\mathfrak m_{\mathcal K}^{>0}\). Identifying \(M\) with the constant germs in \(K\), the coordinate description above gives
\[
\iota_{a,i}^{+}(\mathcal A_{a,i}^{+})
=
\left\{
w\in\mathfrak m_{\mathcal K}^{k-1}:
\operatorname{ins}_i(a_{\widehat i}+w,\ a_i+\epsilon)\in A^{\mathcal K}
\right\}.
\]
Indeed, if \(\widehat\gamma(s)=\operatorname{ins}_i(u_\gamma(s),a_i+s)\) is the normalized representative of \([\gamma]\), then \(\gamma\) is eventually contained in \(A\) if and only if \(\widehat\gamma\) is eventually contained in \(A\). By the Hardy-field interpretation of definable germs, this is equivalent to \(\operatorname{ins}_i([u_\gamma]_{\mathcal K},a_i+\epsilon)\in A^{\mathcal K}\). Since \(\iota_{a,i}^{+}([\gamma])=[u_\gamma-a_{\widehat i}]_{\mathcal K}\), this is precisely the displayed description.

The negative case is analogous. If
\[
\mathcal A_{a,i}^{-}:=
\{[\gamma]\in\mathcal G_{a,i}^{-}:\gamma\text{ is eventually contained in }A\},
\]
then
\[
\iota_{a,i}^{-}(\mathcal A_{a,i}^{-})
=
\left\{
w\in\mathfrak m_{\mathcal K}^{k-1}:
\operatorname{ins}_i(a_{\widehat i}+w,\ a_i-\epsilon)\in A^{\mathcal K}
\right\}.
\]
Thus each \(\mathcal A_{a,i}^{\pm}\) is definable in coordinates in the induced structure on \(\mathfrak m_{\mathcal K}^{k-1}\).

Let \(\mathcal A_a\) be the \(K\)-submanifold of \(\mathcal G_a\) whose coordinate pieces are the \(\mathcal A_{a,i}^{\pm}\), equipped with the restricted maps \(\iota_{a,i}^{\pm}|_{\mathcal A_{a,i}^{\pm}}\). In these coordinates, the pieces of \(\mathcal A_a\) are the definable subsets of \(\mathfrak m_{\mathcal K}^{k-1}\) described above.

\begin{remark}
Although each coordinate piece \(\mathcal A_{a,i}^{\pm}\) is definable after applying the corresponding map \(\iota_{a,i}^{\pm}\), it is not automatic that the whole germ space \(\mathcal A_a\) is definable as a \(K\)-manifold. This would require the transition maps between the pieces to be definable in the relevant induced structure. For the arguments below, we only use the definability of the individual coordinate pieces.
\end{remark}

\section{Abstract Curve Selection Lemma}

We now use the germ space constructed above to formulate an abstract curve selection lemma for the \(\delta\)-topology. Let \(C\subseteq M^k\) be a \(\mathcal C^\infty\) \(\delta\)-cell of order \(n\in\mathbb N^k\), and let \(a\in\cl_\delta(C)\). The ordinary curves from which we build canonical \(\delta\)-curves live in the source cell \(C^{\mathcal L}\subseteq E_n\), and therefore their germs are taken at the point \(\Jet_n(a)\in E_n\). Set
\[
N:=\sum_{j=1}^k(n_j+1).
\]

Let \(V\) be the convex hull of \(M\) in \(K\), and let \(\mathfrak m_{\mathcal K}\) be its maximal ideal. Since \(\mathcal K\succeq\mathcal M\) as an \(\mathcal L\)-structure, the pair \((\mathcal K,V)\) is a model of \(T_{\mathrm{convex}}\). By the quantifier elimination and weak o-minimality results for \(T\)-convex structures recalled in the preliminaries, we regard \((\mathcal K,V)\) as an \(\mathcal L_{\mathrm{convex}}\)-structure.

For each \(r\in\mathbb N\), Lemma~\ref{Finite configurations characterize delta closure} and the o-minimal curve selection lemma give an \(\mathcal L\)-definable curve germ in \(C^{\mathcal L}\) at \(\Jet_n(a)\) whose induced canonical \(\delta\)-curve converges to \(a\) up to level \(r\). After shrinking the representative, such a germ is eventually contained in one of the finitely many half-space pieces
\[
\mathcal G_{\Jet_n(a),i}^{\star},
\qquad
1\leq i\leq N,\quad \star\in\{+,-\}.
\]
Here the index \(i\) ranges over the coordinates of the source space \(E_n\). Moreover, if a source germ lies in \(\mathcal G_{\Jet_n(a),i}^{\star}\), then every level of its canonical lift lies in the half-space with the same coordinate direction and sign, now centered at the corresponding jet \(\Jet_{n+r}(a)\). Indeed, the old coordinates are preserved under the projection \(E_{n+r}\to E_n\), so the \(i^{\mathrm{th}}\) source coordinate keeps the same strict inequality throughout the lifted configurations.

Since there are only finitely many pairs \((i,\star)\), we may choose one pair such that the corresponding finite-level witnesses occur for arbitrarily large \(r\). Fix such a pair. Then for every \(r\in\mathbb N\) there is a witness in this same piece, because convergence up to a larger level implies convergence up to every smaller level.

We work in this fixed piece and use the map
\[
\iota_{\Jet_n(a),i}^{\star}:\mathcal G_{\Jet_n(a),i}^{\star}\to\mathfrak m_{\mathcal K}^{N-1}.
\]
For each \(r\in\mathbb N\), let \(\mathcal B_r(C,a)\) be the set of all germs \([\gamma]\in\mathcal G_{\Jet_n(a),i}^{\star}\) such that \(\gamma\) is eventually contained in \(C^{\mathcal L}\) and the canonical \(\delta\)-curve induced by \(\gamma\) converges to \(a\) up to level \(r\). By the choice of \((i,\star)\), each \(\mathcal B_r(C,a)\) is nonempty. Moreover,
\[
\mathcal B_{r+1}(C,a)\subseteq\mathcal B_r(C,a)
\]
for all \(r\in\mathbb N\).

\begin{lemma}\label{Existence of abstract curve germ}
With notation as above, there are a \(|K|^+\)-saturated elementary extension \((\mathcal K^*,V^*)\succeq(\mathcal K,V)\) in the language \(\mathcal L_{\mathrm{convex}}\), and an element \(\widetilde f\in(\mathfrak m_{\mathcal K^*})^{N-1}\), such that for every \(r\in\mathbb N\) and every \(\mathcal L(K)\)-definable open set \(U\subseteq K^{N-1}\), if \(\widetilde f\in U^{\mathcal K^*}\), then
\[
U\cap \iota_{\Jet_n(a),i}^{\star}(\mathcal B_r(C,a))\neq\emptyset.
\]
\end{lemma}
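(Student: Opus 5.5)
The plan is a compactness argument: realize, in a saturated elementary extension of \((\mathcal K,V)\), a type expressing that \(\widetilde f\) lies in \(\mathfrak m\) and avoids every \(\mathcal L(K)\)-definable open set missing some \(\mathcal B_r\). Fix a \(|K|^+\)-saturated \((\mathcal K^*,V^*)\succeq(\mathcal K,V)\). Set \(B_r:=\iota_{\Jet_n(a),i}^{\star}(\mathcal B_r(C,a))\subseteq\mathfrak m_{\mathcal K}^{N-1}\). Consider the partial type \(p(x)\) over \(K\), in the variables \(x=(x_1,\ldots,x_{N-1})\), consisting of the following formulas:
\[
\text{(a) } \neg V(x_j^{-1}) \text{ for each } j, \text{ together with } x_j=0 \text{ as a separate allowed case;}
\]
more cleanly, the formulas \(|x_j|<c\) for all \(c\in M^{>0}\) (these express \(x\in\mathfrak m^{N-1}\)). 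The type also contains the formulas \(x\notin U\) for every \(r\in\mathbb N\) and every \(\mathcal L(K)\)-definable open \(U\subseteq K^{N-1}\) with \(U\cap B_r=\emptyset\). Any realization \(\widetilde f\) of \(p\) in \(\mathcal K^*\) is the required element: if \(\widetilde f\in U^{\mathcal K^*}\), then \(U\cap B_r\neq\emptyset\) by construction. Since \(p\) has at most \(|K|\) parameters, saturation reduces everything to finite satisfiability of \(p\) in \(\mathcal K\) itself.

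For finite satisfiability, take finitely many formulas: the smallness conditions \(|x_j|<c_1,\dots,|x_j|<c_m\), and open sets \(U_1,\ldots,U_q\) with \(U_\ell\cap B_{r_\ell}=\emptyset\). Let \(r:=\max r_\ell\). Because the \(\mathcal B_r\) are decreasing, \(B_r\subseteq B_{r_\ell}\) for each \(\ell\), so every \(U_\ell\) is disjoint from \(B_r\). By the choice of the piece \((i,\star)\), the set \(B_r\) is nonempty. Pick \(w\in B_r\). Then \(w\in\mathfrak m_{\mathcal K}^{N-1}\), so \(|w_j|<c\) for all \(c\in M^{>0}\), and \(w\notin U_\ell\) for all \(\ell\). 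Hence \(w\) realizes this finite fragment in \(\mathcal K\), and \(p\) is consistent with the elementary diagram of \((\mathcal K,V)\). Note that the smallness conditions are infinitely many \(\mathcal L\)-formulas with parameters from \(M\subseteq K\), so no use of \(V\) is strictly needed. The predicate \(V\) is kept only so that \((\mathfrak m_{\mathcal K^*})^{N-1}\) means the \(V^*\)-infinitesimals, which agree with the realizations of the smallness conditions. This agreement should be checked: an element lies in \(\mathfrak m_{\mathcal K^*}\) exactly when it is below every \(c\in M^{>0}\), which is the natural reading. To be safe, one could add the single \(\mathcal L_{\mathrm{convex}}\)-formula ``\(x_j\in V\) and \(x_j^{-1}\notin V\) or \(x_j=0\)'' instead, and note that this holds for \(w\) in \(\mathcal K\).

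The main point to handle carefully is that \(U^{\mathcal K^*}\) is interpreted by the same \(\mathcal L(K)\)-formula. Openness is irrelevant for the compactness step; it only restricts which sets appear. There is therefore no real obstacle beyond making sure the monotonicity \(\mathcal B_{r+1}\subseteq\mathcal B_r\) and the nonemptiness of every \(\mathcal B_r\) in the fixed piece are in place, and both were established just before the statement. The one subtlety I would double-check is whether ``\(U\cap B_r\neq\emptyset\)'' in the statement is meant in \(K\). It is, since \(B_r\subseteq K^{N-1}\) and the type formulas \(x\notin U\) were included precisely for those \(U\) whose \(K\)-points miss \(B_r\).
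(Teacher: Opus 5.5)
Your argument is correct, but it realizes a different type from the one in the paper.

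\textbf{The paper's route.} The paper sets \(X_r:=\iota_{\Jet_n(a),i}^{\star}(\mathcal B_r(C,a))\) and asserts that each \(X_r\) is \(\mathcal L_{\mathrm{convex}}(K)\)-definable in \((\mathcal K,V)\). Eventual containment in \(C^{\mathcal L}\) comes from the coordinate description of the previous section, and level-\(r\) convergence of the canonical lift is written using \(V\). It then picks \(\widetilde f\in\bigcap_r X_r^{\mathcal K^*}\) by saturation, and pulls \(U^{\mathcal K^*}\cap X_r^{\mathcal K^*}\neq\emptyset\) down to \(\mathcal K\) by elementarity.

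\textbf{Your route.} You instead realize the ``limit'' type made up of the conditions \(x\notin U\), for every open \(\mathcal L(K)\)-definable \(U\) missing some \(B_r\). This uses only that the \(B_r\) are nonempty and decreasing. In particular you never need the definability of the \(B_r\), which is the one non-formal input in the paper's proof, and your argument works verbatim for any nonempty decreasing family.

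\textbf{What the paper's route buys.} It produces a stronger element: \(\widetilde f\in X_r^{\mathcal K^*}\) for every \(r\). So the approximation property holds for every \(\mathcal L_{\mathrm{convex}}(K)\)-definable \(U\), open or not. Also the abstract normalized point lies in \((C^{\mathcal L})^{\mathcal K^*}\), which is used in the proof of Theorem~\ref{Abstract curve selection lemma}. Your element need not have this property as it stands. To get it, add to \(p\) the formula \(x\in Z\) for every definable \(Z\) containing some \(B_r\). An example is the \(\mathcal L(K)\)-definable set of \(w\) with \(\operatorname{ins}_i(\Jet_n(a)_{\widehat i}+w,\Jet_n(a)_i\pm\epsilon)\in(C^{\mathcal L})^{\mathcal K}\). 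Finite satisfiability is proved exactly as before.

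\textbf{One correction.} The two readings of \(\mathfrak m_{\mathcal K^*}\) do not agree. By saturation there is \(x\in\mathcal K^*\) with \(0<x<c\) for all \(c\in M^{>0}\) and \(x,x^{-1}\in V^*\); the finite fragments of this type are realized by elements of \(M\). So the \(M\)-infinitesimals of \(\mathcal K^*\) strictly contain the maximal ideal of \(V^*\). Hence the \(\mathcal L_{\mathrm{convex}}\)-formula \(x_j=0\vee x_j^{-1}\notin V\) must be part of \(p\), not just an optional safeguard. Every \(w\in B_r\) satisfies it in \(\mathcal K\), so nothing else in your proof changes.
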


\begin{proof}
For each \(r\in\mathbb N\), set
\[
X_r:=\iota_{\Jet_n(a),i}^{\star}(\mathcal B_r(C,a)).
\]
Then \(X_r\subseteq\mathfrak m_{\mathcal K}^{N-1}\), the sets \(X_r\) are nonempty, and \(X_{r+1}\subseteq X_r\). Moreover, each \(X_r\) is \(\mathcal L_{\mathrm{convex}}(K)\)-definable in \((\mathcal K,V)\). Indeed, eventual containment in \(C^{\mathcal L}\) is definable in the germ coordinates by the coordinate description of definable subsets in the previous section, and convergence up to a fixed finite level is a finite condition on the canonical lift.

Let \((\mathcal K^*,V^*)\succeq(\mathcal K,V)\) be \(|K|^+\)-saturated. Since the decreasing family \((X_r)_{r\in\mathbb N}\) has the finite intersection property and has cardinality at most \(|K|\), saturation gives an element
\[
\widetilde f\in\bigcap_{r\in\mathbb N}X_r^{\mathcal K^*}.
\]
As each \(X_r\) is contained in \(\mathfrak m_{\mathcal K}^{N-1}\), we have \(\widetilde f\in(\mathfrak m_{\mathcal K^*})^{N-1}\).

Now let \(U\subseteq K^{N-1}\) be \(\mathcal L(K)\)-definable and open, with \(\widetilde f\in U^{\mathcal K^*}\). Since \(\widetilde f\in X_r^{\mathcal K^*}\), the set \(U^{\mathcal K^*}\cap X_r^{\mathcal K^*}\) is nonempty. By elementarity, \(U\cap X_r\neq\emptyset\). This is exactly the displayed conclusion.
\end{proof}

\begin{remark}\label{Remark on abstract curve germs}
The element \(\widetilde f\) should be understood as an abstract germ of an ordinary curve in the source cell \(C^{\mathcal L}\). Every \(\mathcal L(K)\)-definable neighbourhood of \(\widetilde f\) contains the coordinate germ of a genuine \(\mathcal L\)-definable curve whose induced canonical \(\delta\)-curve converges to \(a\) up to any prescribed finite level.
\end{remark}

\begin{definition}\label{Definition of abstract delta curve germ}
Let \(C\subseteq M^k\) be a \(\mathcal C^\infty\) \(\delta\)-cell of order \(n\in\mathbb N^k\), and let \(a\in M^k\). Set \(N:=\sum_{j=1}^k(n_j+1)\). An \emph{abstract curve germ in \(C\) at \(a\)} consists of a half-space piece \(\mathcal G_{\Jet_n(a),i}^{\star}\), a \(|K|^+\)-saturated elementary extension \((\mathcal K^*,V^*)\succeq(\mathcal K,V)\) as \(\mathcal L_{\mathrm{convex}}\)-structures, and an element \(\widetilde f\in(\mathfrak m_{\mathcal K^*})^{N-1}\), such that the corresponding abstract normalized point lies in the interpretation of the source cell. More explicitly, if \(\star=+\), then
\[
\operatorname{ins}_i\bigl(\Jet_n(a)_{\widehat i}+\widetilde f,\ \Jet_n(a)_i+\epsilon\bigr)
\in (C^{\mathcal L})^{\mathcal K^*},
\]
and if \(\star=-\), then
\[
\operatorname{ins}_i\bigl(\Jet_n(a)_{\widehat i}+\widetilde f,\ \Jet_n(a)_i-\epsilon\bigr)
\in (C^{\mathcal L})^{\mathcal K^*}.
\]
Here \(\epsilon=[s\mapsto s]_{\mathcal K^*}\) is the positive infinitesimal represented by the identity germ.

We say that this abstract curve germ \emph{converges to \(a\) along \(C\)} if, for every \(r\in\mathbb N\) and every \(\mathcal L(K)\)-definable open set \(U\subseteq K^{N-1}\) with \(\widetilde f\in U^{\mathcal K^*}\), there is an \(\mathcal L\)-definable curve germ \([\gamma]\in\mathcal G_{\Jet_n(a),i}^{\star}\) such that \(\gamma\) is eventually contained in \(C^{\mathcal L}\), \(\iota_{\Jet_n(a),i}^{\star}([\gamma])\in U\), and the canonical \(\delta\)-curve induced by \(\gamma\) converges to \(a\) up to level \(r\).
\end{definition}

\begin{theorem}\label{Abstract curve selection lemma}
Let \(C\subseteq M^k\) be a \(\mathcal C^\infty\) \(\delta\)-cell, and let \(a\in M^k\). Then \(a\in\cl_\delta(C)\) if and only if there exists an abstract curve germ in \(C\) at \(a\) which converges to \(a\) along \(C\).
\end{theorem}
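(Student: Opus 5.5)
We prove the two implications separately, relying on Lemma~\ref{Finite configurations characterize delta closure} for the $\delta$-closure side and on Lemma~\ref{Existence of abstract curve germ} for the germ-space side.

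\emph{Forward direction.} Assume $a\in\cl_\delta(C)$. By Lemma~\ref{Finite configurations characterize delta closure}, $\Jet_{n+r}(a)\in\cl(C_{n+r})$ for every $r$. The o-minimal curve selection lemma then gives, for each $r$, a definable curve $\sigma_r$ in $C_{n+r}$ converging to $\Jet_{n+r}(a)$. Its projection $\Pi_n\circ\sigma_r$ is a curve in $C^{\mathcal L}$. We must check that the canonical $\delta$-curve induced by $\Pi_n\circ\sigma_r$ converges to $a$ up to level $r$. To do this, lift $\Pi_n\circ\sigma_r$ to the compatible finite system $(\Pi_{n+s}\circ\sigma_r)_{s\le r}$ and run the level-by-level induction from the proof of Proposition~\ref{Canonical delta curve detects convergence}. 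That induction only uses levels $s\le r$, so it applies to a finite compatible system. Graph coordinates converge by continuity of the defining functions. Full-band coordinates of the canonical lift are the constants $\delta^{n_j+s}a_j$, so they converge trivially.

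Next, fix a half-space piece $(i,\star)$ in which witnesses occur for arbitrarily large $r$, as in the discussion preceding Lemma~\ref{Existence of abstract curve germ}; such a piece exists by pigeonhole. Apply that lemma to obtain $(\mathcal K^*,V^*)$ and $\widetilde f\in\bigcap_r X_r^{\mathcal K^*}$.

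Two properties remain. The convergence clause of Definition~\ref{Definition of abstract delta curve germ} is exactly the conclusion of Lemma~\ref{Existence of abstract curve germ}. For the membership clause, $\widetilde f\in X_0^{\mathcal K^*}$, and $X_0$ is contained in the coordinate description $\{w:\operatorname{ins}_i(\Jet_n(a)_{\widehat i}+w,\Jet_n(a)_i\pm\epsilon)\in (C^{\mathcal L})^{\mathcal K}\}$ from Section~7. This containment is $\mathcal L_{\mathrm{convex}}(K)$-expressible, so it transfers to $\mathcal K^*$ by elementarity, and the normalized point lies in $(C^{\mathcal L})^{\mathcal K^*}$.

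\emph{Reverse direction.} Assume an abstract curve germ converging to $a$ along $C$ exists. Fix $r$ and take $U=K^{N-1}$, which is trivially an $\mathcal L(K)$-definable open set containing $\widetilde f$. The definition yields a definable germ $[\gamma]$ in $C^{\mathcal L}$ whose canonical $\delta$-curve $(\gamma_s)$ converges to $a$ up to level $r$. By Proposition~\ref{Canonical delta curve is a delta curve}, $\gamma_r$ takes values in $C_{n+r}$ and tends to $\Jet_{n+r}(a)$, so $\Jet_{n+r}(a)\in\cl(C_{n+r})$. Since $\Jet_{n+r}(C)$ is dense in $C_{n+r}$, we get $\Jet_{n+r}(a)\in\cl(\Jet_{n+r}(C))$, that is, $a\in\cl_{n+r}(C)$. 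As $r$ is arbitrary, Lemma~\ref{Finite configurations characterize delta closure} gives $a\in\cl_\delta(C)$.

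\emph{Main obstacle.} The hardest step is the definability claim inside Lemma~\ref{Existence of abstract curve germ}: that each $X_r$ is $\mathcal L_{\mathrm{convex}}(K)$-definable in $(\mathcal K,V)$. This is what allows the saturation argument and the transfer of membership in $C^{\mathcal L}$.

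The plan is to express the $r$-th level of the canonical lift of the normalized germ $w$ as an $\mathcal L(K)$-definable function of $w$. Each level is obtained by composing the finitely many graph functions $\mathcal S^{[s]}(\cdot)$ and appending the constants $\delta^{n_j+s}a_j$. Convergence to $\Jet_{n+r}(a)$ then becomes the condition that this $K$-point minus $\Jet_{n+r}(a)$ lies in $\mathfrak m_{\mathcal K}$ coordinatewise, which is a condition in the predicate $V$.

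For this to work, the germ-of-composition must agree with composition in $K$. We handle that through the Hardy-field interpretation of definable germs, $\mathcal M\preccurlyeq\mathcal H$. Finally, "eventually contained in $C^{\mathcal L}$" is handled exactly as in the coordinate description from Section~7.
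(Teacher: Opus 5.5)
Your outline is the paper's: forward via Lemma~\ref{Existence of abstract curve germ} plus elementarity for the membership clause, backward via the convergence clause at one level plus Lemma~\ref{Finite configurations characterize delta closure}. Your backward direction is correct, and your appeal to density to pass from $\cl(C_{n+r})$ to $\cl(\Jet_{n+r}(C))$ fills a step the paper leaves implicit.

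The gap is the step you add to make $\mathcal B_r(C,a)$ nonempty, namely that the canonical $\delta$-curve of $\Pi_n\circ\sigma_r$ converges up to level $r$. Continuity of the graph functions does not give this. Such a $G$ is defined only on a coordinate projection of the configuration, and $\Jet_{n+s}(a)$ lies merely in its closure. Nor can you inherit convergence from $\sigma_r$, because the canonical lift is not $(\Pi_{n+s}\circ\sigma_r)_{s\le r}$. It replaces the full-band coordinates of $\sigma_r$ by the constants $\delta^{n_j+s}a_j$, so $G$ is evaluated at different points along the two curves.

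This cannot be patched, because with the canonical-lift definitions the forward implication is false. In $\CODF$ take $C=\{(x,y):y>x^3,\ \delta y=\sqrt{y-x^3}\}$ and $a=(0,0)$.
\begin{itemize}
\item[(i)] The minimal prepared layout of $C$ has order $(0,1)$: $x$ is a free band, $y$ is the band $(x^3,+\infty)$, then $y'=\sqrt{y-x^3}$.
\item[(ii)] In $C_{(1,2)}$ the coordinate $x'$ is a full band, and $y''=(y'-3x^2x')/(2\sqrt{y-x^3})$.
\item[(iii)] The canonical lift freezes $x'=\delta 0=0$. So along the canonical lift of every curve in $C^{\mathcal L}$ one has $y''\equiv\tfrac12$, whereas the corresponding coordinate of $\Jet_{(1,2)}(a)$ is $0$.
\item[(iv)] Hence $\mathcal B_1(C,a)=\varnothing$ in every half-space piece, and no abstract germ converges to $a$ along $C$.
\end{itemize}
Yet $a\in\cl_\delta(C)$. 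Consider the exact solution $x=(c+\epsilon\tau)^{1/3}$, $y=c+\epsilon^2+\epsilon\tau$ of $y'=\sqrt{y-x^3}$, with $c=\epsilon^{1/2}$. Its Taylor data at $\tau=0$ are points of $C_{(r,r+1)}$ tending to $0$ as $\epsilon\to0^+$, for every $r$. At level one they are $x=\epsilon^{1/6}$, $x'=\tfrac13\epsilon^{2/3}$, $y=\epsilon^{1/2}+\epsilon^2$, $y'=\epsilon$, $y''=0$. Realized jets are dense in these cells.

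The paper's proof has the same hole. It takes $\mathcal B_r(C,a)\neq\varnothing$ from the paragraph preceding Lemma~\ref{Existence of abstract curve germ}. That paragraph rests on the continuity step in the proof of Proposition~\ref{Canonical delta curve detects convergence}, and that proposition also fails here, for the $\delta$-curve given by the above Taylor data with $\epsilon=t$.

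A workable version changes the definitions of $\mathcal B_r(C,a)$ and of convergence along $C$. Replace ``the canonical $\delta$-curve induced by $\gamma$ converges to $a$ up to level $r$'' by ``some curve in $C_{n+r}$ lifting $\gamma$ converges to $\Jet_{n+r}(a)$''. Then:
\begin{itemize}
\item[(i)] nonemptiness is immediate from o-minimal curve selection in $C_{n+r}$;
\item[(ii)] the sets still decrease;
\item[(iii)] they are still $\mathcal L_{\mathrm{convex}}(K)$-definable: quantify the lifted coordinates over $K$ and require the difference from $\Jet_{n+r}(a)$ to lie in $\mathfrak m_{\mathcal K}$;
\item[(iv)] your backward argument goes through unchanged.
\end{itemize}
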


\begin{proof}
Suppose first that \(a\in\cl_\delta(C)\). Lemma~\ref{Existence of abstract curve germ} gives an abstract element \(\widetilde f\) with the required approximation property. Since the sets \(\mathcal B_r(C,a)\) consist of germs eventually contained in \(C^{\mathcal L}\), the corresponding abstract normalized point lies in \((C^{\mathcal L})^{\mathcal K^*}\) by elementarity. Hence \(\widetilde f\) is an abstract curve germ in \(C\) at \(a\), and by construction it converges to \(a\) along \(C\).

Conversely, suppose that there exists an abstract curve germ in \(C\) at \(a\) which converges to \(a\) along \(C\). Fix \(r\in\mathbb N\) and choose an \(\mathcal L(K)\)-definable open neighbourhood \(U\) of its coordinate \(\widetilde f\). By convergence along \(C\), there is an \(\mathcal L\)-definable curve germ whose induced canonical \(\delta\)-curve converges to \(a\) up to level \(r\). Hence \(\Jet_{n+r}(a)\in\cl(C_{n+r})\). Since this holds for every \(r\in\mathbb N\), Lemma~\ref{Finite configurations characterize delta closure} gives \(a\in\cl_\delta(C)\).
\end{proof}

\begin{remark}\label{Meaning of abstract curve selection}
An abstract curve germ need not be represented by a curve in \(M\). Its membership in \(C\) is expressed by requiring its abstract normalized point to lie in the interpretation of the source cell \(C^{\mathcal L}\) in the saturated extension. Its convergence to \(a\) is expressed by approximation: every definable neighbourhood of the abstract germ contains genuine \(\mathcal L\)-definable curve germs in \(C^{\mathcal L}\) whose induced canonical \(\delta\)-curves converge to \(a\) to any prescribed finite level. Thus Theorem~\ref{Abstract curve selection lemma} is the abstract analogue of ordinary curve selection for the \(\delta\)-topology.
\end{remark}

\section{Concrete Representatives in \texorpdfstring{\(\CODF\)}{CODF}}

We now specialize to the case \((\mathbb R,\delta)\models\CODF\). The abstract curve selection lemma produces an abstract curve germ in a saturated elementary extension of the Hardy field. We first fix the notation for the fields of series used in this section. For \(m\in\mathbb N_{>0}\), \(\mathbb R((t^{1/m}))\) denotes the field of Laurent series in \(t^{1/m}\) over \(\mathbb R\), and
\[
\mathbb R((t^{1/\infty})):=\bigcup_{m\in\mathbb N_{>0}}\mathbb R((t^{1/m}))
\]
is the Puiseux series field over \(\mathbb R\). We write \(\mathbb R((t^{1/\infty}))_{\mathrm{alg}}\) for the subfield of Puiseux series algebraic over \(\mathbb R(t)\). By the Puiseux expansion of semialgebraic germs, the Hardy field of semialgebraic germs at \(0^+\) is identified as an ordered valued field with
\[
K:=\mathbb R((t^{1/\infty}))_{\mathrm{alg}};
\]
see \cite{MarkerMessmerPillay1996}.

For an ordered field \(A\) and an ordered abelian group \(B\), \(A((t^B))\) denotes the Hahn field of formal series with coefficients in \(A\) and well-ordered support in \(B\), equipped with the Hahn ordering. By the ordered Kaplansky embedding theorem, recalled in \cite[Chapter~3]{ADH2019} from Kaplansky's work \cite{Kaplansky1942,Kaplansky1944}, after passing to a sufficiently saturated elementary extension \(K^*\succeq K\), we may view \(K^*\) inside a Hahn field
\[
\mathbb R((t^{\mathbb R^*})),
\]
where \(\mathbb R^*\succ\mathbb R\) is \(\aleph_1\)-saturated and the value group is the additive ordered group of \(\mathbb R^*\).

The role of the Hahn field is only to represent cuts over \(K\). We do not use arbitrary elements of \(\mathbb R((t^{\mathbb R^*}))\) as concrete functions. The abstract curve selection lemma only records comparisons with elements of \(K\). Hence each coordinate of the abstract germ may first be replaced by another element of the Hahn field inducing the same cut over \(K\). The following reduction shows that, for this purpose, it is enough to consider elements of two explicit forms: either a rational-exponent generalized power series, or a polynomial in one Puiseux parameter together with one non-rational monomial. The first form is formal at this stage; in the concrete realization theorem below, we use it only when it represents a real-valued germ at \(0^+\).

After translating the limit point to the origin and reparametrizing the curve, the coordinates of the ordinary curve we want to construct are germs at \(0^+\) converging to \(0\). Thus, later, when these formal representatives are replaced by ordinary real germs, we choose representatives with the corresponding convergence property. The point of the next lemma is only the preliminary cut reduction over \(K\).

\begin{lemma}\label{Concrete form of abstract curve in an elementary extension}
Let \(\widetilde g\in\mathbb R((t^{\mathbb R^*}))\). Then there exists \(\widetilde f\in\mathbb R((t^{\mathbb R^*}))\) inducing the same cut over \(\mathbb R((t^{1/\infty}))_{\mathrm{alg}}\) as \(\widetilde g\), such that \(\widetilde f\) has one of the following forms:
\begin{itemize}
\item[(i)] \(\sum_{i\in\omega}a_i t^{q_i}\in\mathbb R[[t^{\mathbb Q}]]\), where \((q_i)_{i\in\mathbb N}\) is a strictly increasing sequence in \(\mathbb Q\);
\item[(ii)] \(P(t^{1/m})\pm t^r\) for some positive integer \(m\), some \(P(X)\in\mathbb R[X]\), and some \(r\in\mathbb R^*\setminus\mathbb Q\) with \(r>\ord(P)/m\).
\end{itemize}
\end{lemma}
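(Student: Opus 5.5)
We study the cut of \(\widetilde g\) over \(K=\mathbb R((t^{1/\infty}))_{\mathrm{alg}}\) through the truncations of \(\widetilde g\) and the initial segment of exponents lying in \(\mathbb Q\). Write \(\widetilde g=\sum_{\gamma\in S}c_\gamma t^\gamma\) with \(S\subseteq\mathbb R^*\) well ordered. Let \(\gamma_0\) be the least element of \(S\) that either is not rational or prevents the truncation from agreeing with an element of \(K\) to the relevant order. More precisely, consider the largest initial segment \(S_0\) of \(S\) consisting of rational exponents, and let \(g_0:=\sum_{\gamma\in S_0}c_\gamma t^\gamma\). We split into cases according to whether \(S_0=S\) and according to the order type of \(S_0\).

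\emph{Case 1: \(S_0=S\) and \(S_0\) is finite or has bounded denominators with only finitely many terms below any real bound.} Then \(\widetilde g\) itself lies in \(\mathbb R[[t^{\mathbb Q}]]\). Its support has order type at most \(\omega\) once we keep only the exponents that matter for the cut. Exponents beyond every rational lie above all of \(\mathbb Q\), and the corresponding tail is infinitesimal with respect to every \(t^q\). We discard that tail; if it is nonzero and we need its sign, we instead fall into case (ii) with a large irrational exponent. After this, \(\widetilde g\) has the form (i) with \(\widetilde f=\widetilde g\) or a truncation of it.

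We must also handle \(\widetilde g\) with rational exponents but of order type larger than \(\omega\), for instance when the exponents accumulate at a rational. Here the first \(\omega\) terms already determine the cut. The key observation is that if \(q_i\to q^*\in\mathbb R\) increasing, then every element of \(K\) differs from the partial sums at some finite stage, because elements of \(K\) have exponent sets with bounded denominators. Hence the cut is decided at a finite stage, and \(\widetilde f:=\sum_{i\in\omega}c_it^{q_i}\) induces the same cut.

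\emph{Case 2: the first non-rational exponent \(r\) occurs after finitely many rational exponents.} Put \(P(t^{1/m}):=\sum_{\gamma<r}c_\gamma t^\gamma\), where \(m\) is a common denominator. Then \(\widetilde g=P(t^{1/m})+c_rt^r+\cdots\). For \(h\in K\), the sign of \(\widetilde g-h\) is determined by the first exponent where they differ. Since \(h\) has only rational exponents, this first exponent is either some rational below \(r\) or is exactly \(r\), in which case the sign is that of \(c_r\). Therefore \(\widetilde f:=P(t^{1/m})+\operatorname{sign}(c_r)\,t^r\) induces the same cut, and \(r>\ord(P)/m\) holds automatically; if \(P=0\), we interpret the condition vacuously. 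If the non-rational exponent occurs only after infinitely many rational ones, the rational prefix already decides the cut by the argument of Case 1, and we return to form (i).

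The main obstacle is Case 1 with infinite rational support: we must show rigorously that comparison with elements of \(K\), whose exponents lie in \(\frac1m\mathbb Z\) for a fixed \(m\) and are locally finite, is decided by an \(\omega\)-prefix of the support. The strategy is to argue that for \(h\in K\) with denominator \(m\), either the partial sums eventually have a term outside \(\frac1m\mathbb Z\), or their exponents are unbounded in \(\mathbb Q\). In either event, a first discrepancy occurs at a finite stage.
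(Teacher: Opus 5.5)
The gap is in how you handle a first non-rational exponent $r$ that comes after infinitely many rational ones. You say the rational prefix ``already decides the cut''. Your last paragraph justifies this by a dichotomy: the prefix exponents either leave $\frac{1}{m}\mathbb Z$ or are unbounded in $\mathbb Q$, and ``in either event'' there is a discrepancy with every $h\in K$ at a finite stage. The second alternative forces nothing. Elements of $K$ are in general infinite Puiseux series, so $h$ may agree with the whole prefix.

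Take $\widetilde g=\sum_{n\ge1}t^n+t^r$ with $r\in\mathbb R^*$, $r>\mathbb R$.
\begin{itemize}
\item The prefix is $h_0=t/(1-t)\in K$.
\item The first discrepancy between $\widetilde g$ and $h_0$ is at $t^r$.
\item Since $t^r$ lies below every element of $K^{>0}$, the cut of $\widetilde g$ is $\{h\in K:h\le h_0\}$.
\end{itemize}
Your fallback in Case 1 (``fall into case (ii) with a large irrational exponent'') does not apply. Form (ii) requires the rational part to be a polynomial $P(t^{1/m})$, whereas $h_0$ has infinite support.

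This cannot be repaired, because the statement itself fails here. Any $x$ realizing this cut satisfies $0<x-h_0<k$ for all $k\in K^{>0}$.
\begin{itemize}
\item If $x$ has form (i), then $x-h_0$ is $0$ or has a rational leading term $ct^q$. Hence $x\le h_0$ or $x-h_0>\frac{c}{2}t^q$.
\item If $x=P(t^{1/m})\pm t^{r'}$ has form (ii), then $P(t^{1/m})-h_0$ is a nonzero element of $K$ with rational leading term $ct^q$. For $q<r'$ this is the previous situation. For $r'<q$, the difference $x-h_0$ is either negative or exceeds $t^q$.
\end{itemize}

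The paper's proof has the same hole. It asserts that the rational initial segment before $r$ ``determines a Puiseux polynomial'', which is false exactly when that segment is infinite. It also skips all-rational supports of order type $>\omega$, which you handle correctly.

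Your other sub-cases are sound. An infinite prefix with unbounded denominators (in particular any infinite prefix bounded in $\mathbb Q$) does decide the cut, and so does a bounded-denominator prefix $g_0\notin K$; in both cases truncating to form (i) works. The dichotomy needs one correction: bounded denominators with unbounded exponents make the prefix a Puiseux series $g_0$, which decides the cut if and only if $g_0\notin K$. With that fix, your analysis proves an amended lemma in which (ii) also allows $h\pm t^r$ with arbitrary $h\in K$ and $r>\mathbb R$. This amendment is harmless downstream, since $h\pm e^{-1/t}$ is still a concrete germ.

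A minor point: if $\widetilde g$ has negative exponents, $P$ must be a Laurent polynomial. Your own definition of $P$ already presupposes this.
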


\begin{proof}
Write \(\widetilde g=\sum_{\alpha\in I}a_\alpha t^{r_\alpha}\), where \(I\) is a well-ordered index set, \(a_\alpha\in\mathbb R^\times\), and the exponents \(r_\alpha\in\mathbb R^*\) are strictly increasing. If all exponents are rational and form a countable increasing sequence, then \(\widetilde g\) is of type \emph{(i)}.

Otherwise, let \(r\) be the first exponent of \(\widetilde g\) which is not rational. The initial segment before \(r\) consists of rational exponents. Since elements of \(\mathbb R((t^{1/\infty}))_{\mathrm{alg}}\) have Puiseux expansions with a common denominator, this rational initial segment determines, over the base field, a Puiseux polynomial. Thus the relevant initial part of \(\widetilde g\) has the form
\[
P(t^{1/m})+a_r t^r+\text{higher order terms},
\]
where \(P(X)\in\mathbb R[X]\), \(m\geq1\), \(a_r\in\mathbb R^\times\), and all remaining terms have exponent strictly larger than \(r\).

Assume first that \(a_r>0\). Set \(\widetilde f:=P(t^{1/m})+t^r\). Then \(\widetilde f\) has form \emph{(ii)}. Let \(h\in\mathbb R((t^{1/\infty}))_{\mathrm{alg}}\). The comparison between \(h\) and either \(\widetilde f\) or \(\widetilde g\) is decided by the first exponent at which the corresponding series differ. The series \(\widetilde f\) and \(\widetilde g\) have the same rational initial segment before \(r\), and their first nonrational term has the same positive sign. Since no element of \(\mathbb R((t^{1/\infty}))_{\mathrm{alg}}\) has a term of exponent \(r\), the sign of \(h-\widetilde f\) agrees with the sign of \(h-\widetilde g\). Therefore \(h<\widetilde f\) if and only if \(h<\widetilde g\), so \(\widetilde f\) and \(\widetilde g\) induce the same cut over \(\mathbb R((t^{1/\infty}))_{\mathrm{alg}}\).

If \(a_r<0\), the same argument gives the representative \(P(t^{1/m})-t^r\). Thus, in all cases, \(\widetilde g\) has a representative of one of the stated forms inducing the same cut over \(\mathbb R((t^{1/\infty}))_{\mathrm{alg}}\).
\end{proof}

We now compare the formal element obtained above with ordinary germs at \(0^+\). We do not need to identify the formal element with a real function. We only need to preserve its comparisons with algebraic Puiseux germs, measured against positive algebraic Puiseux germs.

Let
\[
K:=\mathbb R((t^{1/\infty}))_{\mathrm{alg}},
\]
identified with the Hardy field of semialgebraic germs at \(0^+\). We use the same notation for an element of \(K\) and for the corresponding semialgebraic germ. Let \(K^*\) be an elementary extension of \(K\) as an ordered field. If \(\widetilde f\in K^*\), then \(\widetilde f\) determines a metric cut over \(K\) by the comparisons
\[
|\widetilde f-g|<h,
\]
where \(g\in K\) and \(h\in K^{>0}\).

We shall also compare certain real germs at \(0^+\) with elements of \(K\). If \(f\) is a real-valued germ at \(0^+\) and \(g\in K\), we write \(|f-g|<h\) to mean that, for all sufficiently small \(t>0\), the inequality \(|f(t)-g(t)|<h(t)\) holds. Thus such a germ \(f\) determines a metric cut over \(K\) by the eventual comparisons of \(|f-g|\) with positive elements of \(K\). We say that a germ \(f\) realizes the same metric cut over \(K\) as an element \(\widetilde f\in K^*\) if, for every \(g\in K\) and every \(h\in K^{>0}\),
\[
|\widetilde f-g|<h
\quad\Longleftrightarrow\quad
|f-g|<h.
\]

Our goal is to replace the special representatives from Lemma~\ref{Concrete form of abstract curve in an elementary extension} by concrete real germs whenever possible. In case \emph{(i)}, the formal generalized power series is used only when it represents a real-valued germ at \(0^+\). In case \emph{(ii)}, each coordinate has the form
\[
\widetilde f=P(t^{1/m})\pm t^r,
\]
where \(m\in\mathbb N_{>0}\), \(P(X)\in\mathbb R[X]\), and \(r\in\mathbb R^*\setminus\mathbb Q\). After translating and reparametrizing the curve, we only need representatives which converge to \(0\) as \(t\to0^+\). Thus in this case we assume \(P(0)=0\) and \(r>0\). The polynomial part \(P(t^{1/m})\) is already a semialgebraic germ, so only the term \(\pm t^r\) has to be replaced by an ordinary real germ.

Since \(\mathbb R\) is Dedekind complete, the pair \(\mathbb R\preceq\mathbb R^*\) is tame. Hence, if \(r\) is \(\mathbb R\)-bounded, its standard part \(\st_{\mathbb R}(r)\) exists. We now choose, according to the position of the positive exponent \(r\) over \(\mathbb R\), a concrete comparison germ realizing the same metric cut over \(K\) as \(\pm t^r\).

\begin{lemma}\label{Concrete realization of special Hahn cuts}
Let
\[
\widetilde f=P(t^{1/m})\pm t^r,
\]
where \(m\in\mathbb N_{>0}\), \(P(X)\in\mathbb R[X]\), \(r\in(\mathbb R^*)^{>0}\setminus\mathbb Q\), and \(P(0)=0\). Then there exists a real-valued germ \(f\) at \(0^+\) which realizes the same metric cut over \(K\) as \(\widetilde f\). More precisely, \(f\) can be chosen as follows.

If \(r\in\mathbb R\), set \(f(t)=P(t^{1/m})\pm t^r\).

If \(r>\mathbb R\), set \(f(t)=P(t^{1/m})\pm e^{-1/t}\).

If \(r\) is \(\mathbb R\)-bounded, \(r>0\), and \(\st_{\mathbb R}(r)=0\), then \(P=0\), and we set
\[
f(t)=\mp\frac{1}{\log t}.
\]

Finally, suppose that \(r\) is \(\mathbb R\)-bounded and \(\st_{\mathbb R}(r)=s>0\). If \(r>s\), set \(f(t)=P(t^{1/m})\pm t^{s+t}\). If \(r<s\), set \(f(t)=P(t^{1/m})\pm t^{s-t}\).
\end{lemma}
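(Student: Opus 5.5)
The plan is to reduce everything to the valuation of the non-Puiseux part. Since \(P(t^{1/m})\in K\), a translation \(g\mapsto g-P(t^{1/m})\) permutes \(K\). It therefore suffices to show that the remaining term \(\pm t^r\) and the proposed replacement \(f_0:=f-P(t^{1/m})\) realize the same metric cut over \(K\), i.e. for all \(g\in K\) and \(h\in K^{>0}\),
\[
|\pm t^r-g|<h \iff |f_0-g|<h .
\]
In the Hahn field, \(r\notin\mathbb Q\) while every element of \(K\) has support in \(\mathbb Q\). Hence for \(g\in K\) one gets \(v(t^r-g)=\min(r,v(g))\), with the leading term of \(t^r-g\) equal to that of \(t^r\) when \(r<v(g)\) and to that of \(g\) when \(v(g)<r\). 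The comparison with \(h\), which has rational valuation, is then decided by the following data: the position of \(r\) in the cut of \(\mathbb Q\), i.e. \(t^q\gg t^r\) for rational \(q<r\) and \(t^r\gg t^q\) for rational \(q>r\); and, when \(v(g)=v(h)<r\), the leading coefficients of \(g\) and \(h\).

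The key step is therefore an \emph{abstract criterion}. Suppose \(f_0\) is an eventually positive real germ with \(f_0\to0\) that is comparable with every element of \(K\), for instance because \(f_0\) lies in a Hardy field containing \(K\) (the germs below are definable in \(\mathbb R_{\exp}\), so this holds). Suppose also that \(t^q/f_0\to\infty\) for all rational \(q<r\) and \(f_0/t^q\to\infty\) for all rational \(q>r\). Then \(\pm f_0\) realizes the same metric cut over \(K\) as \(\pm t^r\). To prove this I would redo the case analysis above with ordinary germs. If \(v(g)>r\), then \(|f_0-g|\sim f_0\). If \(v(g)<r\), then \(|f_0-g|\sim|g|\). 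In both cases the comparison with \(h\) gives the same answer as on the Hahn side, since no \(h\in K\) can sit between \(f_0\) and every \(t^q\) with \(q\) on one side of \(r\).

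With the criterion in hand, the cases of the lemma are verifications. If \(r\in\mathbb R\setminus\mathbb Q\), take \(f_0=t^r\). If \(r>\mathbb R\), every \(t^q\gg e^{-1/t}\), so take \(f_0=e^{-1/t}\). If \(r\) is a positive infinitesimal, first observe that \(P=0\): from \(P(0)=0\) we get \(\ord(P)/m\ge 1/m>r\), contradicting the hypothesis \(r>\ord(P)/m\) of Lemma~\ref{Concrete form of abstract curve in an elementary extension} unless \(P=0\). Then \(-1/\log t\) tends to \(0\) more slowly than every \(t^q\) with \(q>0\), and tends to \(0\). The sign in the statement has to be checked against the convention for \(\pm\), since the metric cut is sensitive to the sign when \(v(g)\) is close to \(r\).

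I expect the main obstacle to be the case \(\st_{\mathbb R}(r)=s>0\). If \(s\notin\mathbb Q\), then \(r\) and \(s\) determine the same cut of \(\mathbb Q\). In that case \(t^{s\pm t}=t^s e^{\pm t\log t}\sim t^s\) satisfies the criterion (as does \(t^s\) itself), and the claim holds. If \(s\in\mathbb Q\), the criterion demands \(f_0\ll t^s\) and \(f_0\gg t^q\) for \(q>s\) when \(r>s\), and the reverse inequalities when \(r<s\). The proposed germs \(t^{s\pm t}\) are asymptotic to \(t^s\), so they would have to be checked against \(g=t^s\) directly, and I suspect they fail there: \(|t^{s+t}-t^s|\approx t^{s+1}|\log t|\) gives a different cut from \(t^r\). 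So I would first try to verify the statement as written. If it breaks, I would replace the germ by \(t^s\cdot(-1/\log t)\) when \(r>s\), and by \(t^s\cdot(-\log t)\) when \(r<s\); these satisfy the criterion, as the same slow-variation argument as in the infinitesimal case shows.
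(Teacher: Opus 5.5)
Your proposal is correct, and in the last case it is more accurate than the paper's proof. The paper follows the same route you do: translate by $P(t^{1/m})\in K$, then decide everything by comparing with rational powers $t^q$.

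For $\st_{\mathbb R}(r)=s>0$, however, the paper argues as follows. Since $s\pm t$ compares with each $q\in\mathbb Q$ as $r$ does, it concludes that $t^{s\pm t}$ and $t^r$ have the same comparisons with all rational powers of $t$. That inference fails at $q=s$. An eventual inequality between exponents gives a dominance relation only if the exponent gap stays away from $0$, and here $t^{s\pm t}/t^s=t^{\pm t}\to 1$.

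So when $s\in\mathbb Q$ the lemma as stated is false, and you should commit to the correction rather than hedge. A simpler witness than yours:
\begin{itemize}
\item For $r>s$, take $g=0$ and $h=\tfrac12 t^s\in K^{>0}$. In the Hahn field $t^r/t^s=t^{r-s}$ is infinitesimal, so $|t^r|<h$. On the germ side, $t^{s+t}\geq e^{-1/e}t^s>h$ for all $t\in(0,1)$.
\item For $r<s$, take $h=2t^s$. On the germ side $t^{s-t}\leq e^{1/e}t^s<h$. In the Hahn field $t^{r-s}$ is infinitely large, so $|t^r|>h$.
\end{itemize}
Your replacements $t^s\cdot(-1/\log t)$ for $r>s$ and $t^s\cdot(-\log t)$ for $r<s$ satisfy your dominance criterion and give a correct version. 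When $s\notin\mathbb Q$, the paper's germs are fine, and so is $t^s$ itself.

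Three smaller points.
\begin{itemize}
\item The sign in the infinitesimal case is right as stated. The upper sign gives $-1/\log t>0$, which matches $+t^r$.
\item The conclusion $P=0$ in that case does not follow from the hypotheses displayed in this lemma. As you observed, it needs $r>\ord(P)/m$ from Lemma~\ref{Concrete form of abstract curve in an elementary extension}. Without that hypothesis, simply take $P(t^{1/m})\mp 1/\log t$.
\item Your sketch of the criterion is loose when $v(g)=v(h)<r$ and $|g|$, $h$ share a leading term, because $|f_0-g|\sim|g|$ then does not decide the comparison with $h$. Avoid this with the equivalence $|x-g|<h \iff g-h<x<g+h$. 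The metric cut over $K$ is then determined by the order cut, so it suffices to show that $c<\pm t^r$ holds exactly when $c<\pm f_0$ holds eventually, for each $c\in K$. This follows at once from your two dominance conditions, because $v(c)\in\mathbb Q$ never equals $r$.
\end{itemize}
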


\begin{proof}
It is enough to compare the final terms, since \(P(t^{1/m})\in K\). We treat the case \(\widetilde f=P(t^{1/m})+t^r\); the case with the minus sign is obtained by replacing the final comparison term by its negative.

Let \(g\in K\) and \(h\in K^{>0}\). We need to show that \(|\widetilde f-g|<h\) in \(K^*\) if and only if \(|f-g|<h\) eventually as germs at \(0^+\). Replacing \(g\) by \(g-P(t^{1/m})\), we may assume that \(P=0\). Since every nonzero element of \(K\) has a leading term \(ct^q\), with \(c\in\mathbb R^\times\) and \(q\in\mathbb Q\), comparisons over \(K\) are decided by comparisons with rational powers of \(t\).

If \(r\in\mathbb R\), then \(f(t)=t^r\), and the assertion is immediate.

Suppose \(r>\mathbb R\). Then \(t^r\) is smaller than every positive element of \(K\). The germ \(e^{-1/t}\) has the same property: for every \(q\in\mathbb Q\), one has \(e^{-1/t}\prec t^q\). Hence \(t^r\) and \(e^{-1/t}\) have the same comparisons with every element of \(K\) and every positive element of \(K\).

Suppose that \(r\) is \(\mathbb R\)-bounded, \(r>0\), and \(\st_{\mathbb R}(r)=0\). Then \(t^r\) tends to \(0\), but more slowly than every positive rational power of \(t\). Also \(-1/\log t>0\) for sufficiently small \(t>0\), and for every \(q\in\mathbb Q_{>0}\),
\[
t^q\prec -\frac{1}{\log t}\prec 1.
\]
Thus \(-1/\log t\) has the same comparisons over \(K\) as \(t^r\).

Finally suppose that \(r\) is \(\mathbb R\)-bounded and \(\st_{\mathbb R}(r)=s>0\). If \(r>s\), then for every \(q\in\mathbb Q\), the comparison between \(r\) and \(q\) agrees with the eventual comparison between \(s+t\) and \(q\). Hence \(t^r\) and \(t^{s+t}\) have the same comparisons with all rational powers of \(t\), and therefore with all elements of \(K\). If \(r<s\), the same argument applies with \(s-t\) in place of \(s+t\).

This proves the assertion in the plus case. The minus case follows by applying the same argument to the negatives of the final comparison terms.
\end{proof}

\begin{theorem}\label{Concrete realization of abstract CODF curve}
Let \(\mathcal M=(\mathbb R,\delta)\models\CODF\), let \(C\subseteq\mathbb R^k\) be a \(\mathcal C^\infty\) \(\delta\)-cell, and let \(a\in\cl_\delta(C)\). After translating \(a\) to \(0\) and reparametrizing the curve, let \(\widetilde f=(\widetilde f_1,\ldots,\widetilde f_N)\) be the abstract germ obtained from Theorem~\ref{Abstract curve selection lemma}, viewed over \(K=\mathbb R((t^{1/\infty}))_{\mathrm{alg}}\).

Assume that every coordinate of type \emph{(i)} in Lemma~\ref{Concrete form of abstract curve in an elementary extension} represents a real-valued germ at \(0^+\). Then there is a tuple of real-valued germs \(f=(f_1,\ldots,f_N)\) at \(0^+\), with each \(f_j(t)\to0\), such that \(f\) realizes the same coordinatewise metric cut over \(K\) as \(\widetilde f\).

Moreover, each coordinate \(f_j\) may be chosen in one of the following forms:
\begin{itemize}
\item[(i)] a rational-exponent series \(f_j(t)=\sum_{i\in\omega}a_i t^{q_i}\), where \((q_i)_{i\in\mathbb N}\) is strictly increasing in \(\mathbb Q\) and the series represents a real-valued germ;
\item[(ii)] one of the concrete germs listed in Lemma~\ref{Concrete realization of special Hahn cuts}.
\end{itemize}
\end{theorem}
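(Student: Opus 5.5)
The proof is a coordinatewise assembly of the two preceding lemmas. We first place the abstract germ inside the Hahn field and then treat each coordinate separately. By Theorem~\ref{Abstract curve selection lemma} and Lemma~\ref{Existence of abstract curve germ}, the abstract germ $\widetilde f$ lives in $(\mathfrak m_{\mathcal K^*})^{N-1}$ for a saturated $(\mathcal K^*,V^*)\succeq(\mathcal K,V)$. In the case $T=\RCF$ over $\mathbb R$, $\mathcal K$ is identified with $K=\mathbb R((t^{1/\infty}))_{\mathrm{alg}}$. By the ordered Kaplansky embedding recalled before Lemma~\ref{Concrete form of abstract curve in an elementary extension}, after passing to a further elementary extension if necessary (which does not change any comparison with elements of $K$), we may view each coordinate $\widetilde f_j$ as an element $\widetilde g_j\in\mathbb R((t^{\mathbb R^*}))$. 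Since only the metric cut over $K$ matters, it suffices to treat one coordinate at a time. The coordinatewise metric cut of a tuple is by definition the conjunction of the cuts of its coordinates.

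Fix $j$ and apply Lemma~\ref{Concrete form of abstract curve in an elementary extension} to $\widetilde g_j$. This gives $\widetilde f_j'$ of form (i) or (ii) inducing the same cut over $K$. The next step is to upgrade ``same cut'' to ``same metric cut''. The key observation is that for $g\in K$ and $h\in K^{>0}$ the condition $|x-g|<h$ is equivalent to $g-h<x<g+h$, and $g\pm h\in K$. So the metric cut of $x$ over $K$ is determined by its ordinary cut over $K$. We then record that $\widetilde f_j'\in\mathfrak m$, since $\widetilde f_j$ is infinitesimal over $\mathbb R$ and the cut over $K\supseteq\mathbb R$ is preserved.

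In case (ii) this infinitesimality gives $P(0)=0$ and $r>0$. We then invoke Lemma~\ref{Concrete realization of special Hahn cuts}, which produces one of the listed real germs $f_j$ realizing the same metric cut as $\widetilde f_j'$, and hence as $\widetilde f_j$. Each listed germ tends to $0$: $t^r$ with $r>0$, $e^{-1/t}$, $-1/\log t$, $t^{s\pm t}$ with $s>0$, and $P(t^{1/m})$ with $P(0)=0$.

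In case (i) we use the hypothesis that the series represents a real-valued germ $f_j$. We must check that this germ has the same comparisons with $K$ as the formal series. The intended argument is that comparisons with an element of $K$ are decided by the leading term of the difference. For a convergent series, the function is asymptotic to that leading monomial, so the eventual sign agrees with the formal sign.

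This is the step I expect to be the main obstacle. ``Represents a real-valued germ'' must be read strongly enough that asymptotic expansion holds, so that $f_j-g\sim c\,t^{q}$ where $c\,t^q$ is the leading term of the formal difference. I would make this the precise interpretation of the hypothesis, namely an asymptotic expansion valid term by term. Under that reading the comparison follows as in Lemma~\ref{Concrete realization of special Hahn cuts}. Convergence to $0$ holds because all exponents are positive, again forced by infinitesimality.
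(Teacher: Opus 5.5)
Your route is the paper's own: apply Lemma~\ref{Concrete form of abstract curve in an elementary extension} coordinatewise, keep the germ in case (i) by hypothesis, and invoke Lemma~\ref{Concrete realization of special Hahn cuts} in case (ii). The checks you add are correct and make explicit what the paper leaves implicit: the metric cut is determined by the order cut since \(g\pm h\in K\); infinitesimality forces \(P(0)=0\) and \(r>0\); and the type (i) hypothesis must be read as a term-by-term asymptotic expansion.

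The real problem is not case (i), which you handle adequately under your strengthened reading. It is the case (ii) step you took as a black box, on which the paper's proof relies in exactly the same way. Lemma~\ref{Concrete realization of special Hahn cuts} is false when \(s=\st_{\mathbb R}(r)\) is a positive \emph{rational} and \(r\neq s\). The prescribed germ \(t^{s\pm t}=t^{s}e^{\pm t\log t}\) is asymptotic to \(t^{s}\in K\), whereas \(t^{r}\) is not in the archimedean class of \(t^{s}\). Concretely, take \(\widetilde f=P(t^{1/m})+t^{1+\varepsilon}\) with \(\varepsilon\in\mathbb R^*\) a positive infinitesimal; the lemma prescribes \(f=P(t^{1/m})+t^{1+t}\). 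With \(g=P(t^{1/m})\) and \(h=t/2\), one has \(|\widetilde f-g|=t^{1+\varepsilon}<t/2\) in the Hahn field. But \(|f(t)-g(t)|=t\,e^{t\log t}>t/2\) for all small \(t>0\). The lemma's proof breaks at ``same comparisons with all rational powers of \(t\), and therefore with all elements of \(K\)''. One needs comparisons with \(c\,t^{q}\) for every \(c\in\mathbb R^{>0}\), not only with \(t^{q}\).

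The repair imitates the paper's own \(\st_{\mathbb R}(r)=0\) case. For \(s\in\mathbb Q^{>0}\), realize \(P(t^{1/m})\pm t^{r}\) by \(P(t^{1/m})\mp t^{s}/\log t\) if \(r>s\), and by \(P(t^{1/m})\mp t^{s}\log t\) if \(r<s\). This works because \(t^{r-s}\) is a positive infinitesimal above every \(c\,t^{q}\) with \(q\in\mathbb Q^{>0}\), exactly like \(-1/\log t\); in the case \(r<s\) it is a positive infinite element below every \(c\,t^{-q}\), exactly like \(-\log t\). For irrational \(s\), already \(t^{s}\) works (as does \(t^{s\pm t}\)), since no element of \(K\) lies in that archimedean class. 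With this correction your argument proves the existence assertion, but the list in the ``Moreover'' clause must be amended.

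A similar unchecked point sits in Lemma~\ref{Concrete form of abstract curve in an elementary extension}, which you also use as a black box. The rational initial segment before the first non-rational exponent may be infinite, e.g.\ \(\widetilde g=t^{2}/(1-t)+t^{\omega}\) with \(\omega\in\mathbb R^*\), \(\omega>\mathbb R\). Then no element of form (i) or (ii) induces the same cut over \(K\): the cut forces \(0<\widetilde f-t^{2}/(1-t)<h\) for all \(h\in K^{>0}\), while for such \(\widetilde f\) this difference has valuation at most some rational. So form (ii) must allow an arbitrary \(g_0\in K\) in place of \(P(t^{1/m})\), realized by \(g_0\pm e^{-1/t}\).
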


\begin{proof}
Apply Lemma~\ref{Concrete form of abstract curve in an elementary extension} to each coordinate of \(\widetilde f\). In case \emph{(i)}, by the additional assumption, the coordinate is represented by a rational-exponent series with a real-germ interpretation, and we take that germ. In case \emph{(ii)}, the coordinate has the form \(P(t^{1/m})\pm t^r\) with positive exponent after translating and reparametrizing the curve, and Lemma~\ref{Concrete realization of special Hahn cuts} gives a real germ realizing the same metric cut over \(K\). Applying this construction coordinatewise gives the desired tuple \(f\).
\end{proof}

\bibliographystyle{alpha}
\bibliography{references}

\end{document}